\documentclass[11pt,reqno]{amsart}

\usepackage[T1]{fontenc}
\usepackage[utf8]{inputenc}
\usepackage{lmodern}
\usepackage{amsmath,amssymb,amsthm,mathtools,mathrsfs}
\usepackage{esint}
\usepackage{enumitem}
\usepackage{aliascnt}
\usepackage{microtype}
\usepackage[colorlinks=true,linkcolor=blue,citecolor=blue,urlcolor=blue]{hyperref}
\usepackage[nameinlink,capitalize,noabbrev]{cleveref}
\usepackage[a4paper,margin=1.15in]{geometry}
\numberwithin{equation}{section}

\newtheorem{theorem}{Theorem}[section]

\newaliascnt{proposition}{theorem}
\newtheorem{proposition}[proposition]{Proposition}
\aliascntresetthe{proposition}

\newaliascnt{lemma}{theorem}
\newtheorem{lemma}[lemma]{Lemma}
\aliascntresetthe{lemma}

\newaliascnt{corollary}{theorem}
\newtheorem{corollary}[corollary]{Corollary}
\aliascntresetthe{corollary}

\newaliascnt{assumption}{theorem}

\aliascntresetthe{assumption}

\theoremstyle{definition}
\newaliascnt{definition}{theorem}
\newtheorem{definition}[definition]{Definition}
\aliascntresetthe{definition}

\newaliascnt{example}{theorem}

\aliascntresetthe{example}

\theoremstyle{remark}
\newaliascnt{remark}{theorem}
\newtheorem{remark}[remark]{Remark}
\aliascntresetthe{remark}

\crefname{theorem}{Theorem}{Theorems}
\Crefname{theorem}{Theorem}{Theorems}
\crefname{proposition}{Proposition}{Propositions}
\Crefname{proposition}{Proposition}{Propositions}
\crefname{lemma}{Lemma}{Lemmas}
\Crefname{lemma}{Lemma}{Lemmas}
\crefname{corollary}{Corollary}{Corollaries}
\Crefname{corollary}{Corollary}{Corollaries}
\crefname{definition}{Definition}{Definitions}
\Crefname{definition}{Definition}{Definitions}
\crefname{remark}{Remark}{Remarks}
\Crefname{remark}{Remark}{Remarks}
\crefname{assumption}{Assumption}{Assumptions}
\Crefname{assumption}{Assumption}{Assumptions}

\newcommand{\R}{\mathbb R}
\newcommand{\N}{\mathbb N}
\newcommand{\C}{\mathbb C}

\newcommand{\LipLip}{\operatorname{Lip}_{\mathrm c}}
\newcommand{\capw}{\operatorname{cap}_{2,\omega}}
\newcommand{\capwt}{\operatorname{cap}_{2,w}}
\newcommand{\supp}{\operatorname{supp}}

\newcommand{\one}{\mathbf 1}

\newcommand{\dist}{\operatorname{dist}}
\newcommand{\loc}{\mathrm{loc}}

\newcommand{\avgint}{\fint}
\newcommand{\Hpi}{\mathcal H_{\pi}}
\newcommand{\Hm}{\mathcal H_m}
\newcommand{\Vpi}{\mathcal V_{\pi}}
\newcommand{\Vm}{\mathcal V_m}
\newcommand{\PhiPi}{\Phi_{\pi}}
\newcommand{\cH}{\mathcal H}

\title[Mixed Poincar\'e and Fefferman--Phong inequalities]{Mixed Poincar\'e and Fefferman--Phong inequalities for measure potentials on $2$-PI spaces}
\author{Tan Duc Do}
\thanks{Faculty of Applied Sciences, Ho Chi Minh City University of Industry and Trade, Ho Chi Minh City, Vietnam. Corresponding author: \href{mailto:tanducdo.math@gmail.com}{tanducdo.math@gmail.com}.}
\date{}

\subjclass[2020]{Primary 46E35, 31E05; Secondary 35J70, 47B25, 42B37}
\keywords{mixed Poincar\'e inequality, Fefferman--Phong inequality, measure potential, variational capacity, $2$-PI space, critical radius}

\hypersetup{
  pdftitle={Mixed Poincare and Fefferman--Phong inequalities for measure potentials on 2-PI spaces},
  pdfauthor={Tan Duc Do},
  pdfsubject={Mixed Poincare and Fefferman--Phong inequalities for singular measure potentials on 2-PI spaces},
  pdfkeywords={mixed Poincare inequality, Fefferman--Phong inequality, measure potential, variational capacity, 2-PI space, critical radius}
}

\begin{document}

\begin{abstract}
We prove a fixed-outer-domain content--capacity estimate for every positive codimensional gain on an unbounded complete $2$-PI space. As its principal measure-theoretic consequence, a one-sided ball-growth condition for an arbitrary positive Radon measure $\pi$ yields
\[
 \pi(K)\lesssim\operatorname{cap}_{2,\omega}(K;\Lambda_0B)
\]
uniformly for compact $K\subset B$. No doubling or lower-dimensional regularity is imposed on $\pi$. This capacitary domination gives a representative-independent mean-zero trace inequality. On normalized balls, it is equivalent, modulo the ambient Poincar\'e energy, to the mixed $d\omega\,d\pi$ oscillation required in Fefferman--Phong reductions. A standard bounded-overlap localization then yields two-sided global Fefferman--Phong inequalities, cover-independent energy norms, and a concrete realization of the associated homogeneous energy completion.

The abstract results are verified for Euclidean $A_2$ weights with generalized Schr\"odinger measure potentials, reverse-H\"older function potentials, Carnot groups, and lower-dimensional singular measures. We compare the local trace conclusion with existing two-weighted Poincar\'e and Sobolev embedding theorems: those routes apply under additional doubling or dimension assumptions on the target measure, whereas our capacity conclusion also supplies absolute continuity with respect to variational capacity and a fixed outer domain. The Euclidean application yields form-domain equivalence, smooth form cores, self-adjoint realization, resolvent energy estimates, and local critical-multiplier bounds. Finally, the method supplies the mixed-measure step missing from a previously published $A_2$ generalized Schr\"odinger argument and gives a fixed-dilate finite-scale $A_2$ extension, for the naturally augmented measure, of a later $A_1$ theory.
\end{abstract}

\maketitle

%
\section{Introduction and main results}

\subsection{Motivation}

Let $(X,d,\omega)$ be a metric measure space and let $\pi$ be a positive Radon measure. The problem studied in this paper is to compare the measure-potential term
\begin{equation*}
 \int_X |\widetilde u|^2\,d\pi
\end{equation*}
with a reciprocal-scale multiplier
\begin{equation*}
 \int_X m(x)^2 |u(x)|^2\,d\omega(x),
\end{equation*}
up to the first-order energy of $u$. Here $\widetilde u$ is a quasicontinuous representative and $m$ is generated by a family of normalized balls or, in the Euclidean application, by a critical radius associated with $\pi$. Our target is a two-sided comparison of the form
\begin{align}
 \int_X m^2|u|^2\,d\omega
 &\lesssim \int_X g_u^2\,d\omega+\int_X|\widetilde u|^2\,d\pi,
 \label{eq:intro-fp-forward}\\
 \int_X|\widetilde u|^2\,d\pi
 &\lesssim \int_X g_u^2\,d\omega+\int_Xm^2|u|^2\,d\omega,
 \label{eq:intro-fp-reverse}
\end{align}
where $g_u$ denotes the minimal $2$-weak upper gradient.

Inequalities of this type belong to the Fefferman--Phong uncertainty-principle theory; see \cite{Fefferman1983}. Critical-radius methods for Schr\"odinger operators with reverse-H\"older or measure potentials were developed in influential work of Shen \cite{Shen1995,Shen1999} and were subsequently adapted to degenerate and generalized settings; see, among others, \cite{KurataSugano,WuYan,BuiDoTrong2021,DoTruong2023}.

For a positive function potential, H\"older's inequality and Sobolev--Poincar\'e estimates often control the potential term directly. A singular Radon measure creates a different problem. The elementary reduction used in the earlier generalized Schr\"odinger argument discussed in \cref{sec:relation-earlier} produces the mixed oscillation
\begin{equation}
 \mathcal O_{\omega,\pi}(u;B)
 :=\int_B\int_B |u(x)-\widetilde u(y)|^2\,d\omega(x)\,d\pi(y),
 \label{eq:intro-mixed-osc}
\end{equation}
not the ordinary $d\omega(x)d\omega(y)$ oscillation. The local object delivered by our method is the stronger structural statement
\[
 \pi(K)\lesssim\capw(K;\Lambda_0B),
 \qquad K\subset B\text{ compact},
\]
which yields a representative-independent mean-zero trace inequality. Under the critical normalization, that trace inequality and \eqref{eq:intro-mixed-osc} are equivalent modulo the ambient Poincar\'e energy; see \cref{prop:mixed-trace-equivalence}.

The main local mechanism is therefore
\begin{equation*}
 \begin{aligned}
 \text{positive ball-scale gain}
 &\Longrightarrow \text{content domination}
 \Longrightarrow \text{fixed-domain capacity domination}\\
 &\Longrightarrow \text{mean-zero trace and mixed Poincar\'e estimates}.
 \end{aligned}
\end{equation*}
A standard normalized-cover summation then yields \eqref{eq:intro-fp-forward}--\eqref{eq:intro-fp-reverse}.

\subsection{Metric Sobolev setting}

The abstract theory is developed on an unbounded complete $2$-PI space $(X,d,\omega)$. Thus $\omega$ is doubling and $X$ supports a weak $(1,2)$-Poincar\'e inequality. We use the Newtonian Sobolev spaces introduced by Shanmugalingam \cite{Shanmugalingam2000}; standard background can be found in \cite{BjornBjornBook,HKST}. The standard weak $(1,2)$-Poincar\'e inequality has an $L^1$ oscillation on the left. By the Keith--Zhong open-endedness theorem \cite{KeithZhong} and the Sobolev--Poincar\'e improvement of Haj\l asz--Koskela \cite{HajlaszKoskela}, it implies the quadratic estimate
\begin{equation*}
 \int_B |u-u_{B,\omega}|^2\,d\omega
 \lesssim r_B^2\int_{\lambda B}g_u^2\,d\omega.
\end{equation*}
We give the short derivation in \cref{sec:preliminaries}.

For an open set $\Omega\subset X$ and a compact set $K\subset\Omega$, the relative variational capacity is
\begin{equation*}
 \capw(K;\Omega)
 :=\inf\left\{
 \int_\Omega g_v^2\,d\omega:
 v\in N_0^{1,2}(\Omega),\ \widetilde v\ge1\ \text{quasieverywhere on }K
 \right\}.
\end{equation*}
The needed Choquet and approximation properties follow from metric nonlinear potential theory; see \cite{BjornBjornBook,BjornBjornCapacity,KinnunenMartio}.

\subsection{Content, capacity, and mixed oscillation}

Let $B_0=B(x_0,R)$ and let $\delta>0$. For compact $K\subset B_0$, define
\begin{equation}
 \cH_{\delta,\omega}(K;B_0)
 :=\inf_{\{B_i\}}
 \sum_i\left(\frac{r_i}{R}\right)^\delta
 \frac{\omega(B_i)}{r_i^2},
 \label{eq:intro-content}
\end{equation}
where the infimum is taken over countable coverings $K\subset\bigcup_i B_i$ with centers in $B_0$ and radii $0<r_i\le R$. In a $Q$-Ahlfors regular space, provided $Q-2+\delta>0$, \eqref{eq:intro-content} is comparable to $R^{-\delta}$ times the $(Q-2+\delta)$-dimensional Hausdorff content. Thus $\delta>0$ places the covering exponent strictly above the capacity-critical codimension two.

Our first main result is a fixed-outer-domain content--capacity estimate.

\begin{theorem}
\label{thm:intro-content-capacity}
Let $(X,d,\omega)$ be an unbounded complete $2$-PI space and let $\delta>0$. There exists a structural dilation $\Lambda_0>1$ such that
\begin{equation}
 \cH_{\delta,\omega}(K;B_0)
 \le C_\delta\,\capw(K;\Lambda_0B_0)
 \label{eq:intro-content-capacity}
\end{equation}
for every ball $B_0\subset X$ and every compact set $K\subset B_0$.
\end{theorem}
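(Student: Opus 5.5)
We argue by duality through a Lipschitz test function and a Frostman-type/boxing decomposition, following the classical Hausdorff content--capacity argument. Fix a test function $v\in N_0^{1,2}(\Lambda_0B_0)$ with $\widetilde v\ge1$ q.e. on $K$. After truncating to $0\le v\le1$ and approximating, we may assume $v$ is Lipschitz with compact support in $\Lambda_0B_0$ and $v\ge1$ on a neighbourhood of $K$; the approximation uses the Choquet and outer-regularity properties of $\capw$ cited above. It then suffices to produce a cover of $K$ by balls $B_i$ centered in $B_0$ with radii at most $R$ and
\[
 \sum_i\Bigl(\frac{r_i}{R}\Bigr)^\delta\frac{\omega(B_i)}{r_i^2}\le C_\delta\int_{\Lambda_0B_0}g_v^2\,d\omega .
\]

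For each $x\in K$, consider the dyadic radii $\rho_k=2^{-k}R$, $k\ge0$. Since $v(x)=1$ at Lebesgue points, while $v\equiv0$ outside $\Lambda_0B_0$, the averages $v_{B(x,\rho_k)}$ tend to $1$. At the top scale, we choose $\Lambda_0$ large so that $B(x,R)\subset 2B_0$ and $\Lambda_0B_0\setminus 2B_0$ occupies a fixed proportion of $\Lambda_0 B_0$ by doubling and reverse doubling (the latter holds because $X$ is unbounded and connected). The quadratic Poincar\'e inequality applied on $\Lambda_0 B_0$, compared with the average over $B(x,R)$, then gives either $v_{B(x,R)}\le 1/2$ or an energy bound $\int_{\Lambda_0B_0}g_v^2\gtrsim \omega(B_0)/R^2$. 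In the latter case the single ball $B_0$ already covers $K$ at cost $\omega(B_0)/R^2$, and we are done. So assume $v_{B(x,R)}\le1/2$ for all $x\in K$. Telescoping then gives
\[
 \tfrac12\le\sum_{k\ge0}|v_{B(x,\rho_{k+1})}-v_{B(x,\rho_k)}|\lesssim\sum_{k\ge0}\rho_k\Bigl(\frac{1}{\omega(B(x,\rho_k))}\int_{\lambda B(x,\rho_k)}g_v^2\,d\omega\Bigr)^{1/2}.
\]
Since $\sum_k c_\delta (\rho_k/R)^{\delta/2}\le 1/4$ for a suitable $c_\delta$, some $k=k(x)$ satisfies
\[
 \rho_k^2\,\frac{1}{\omega(B_x)}\int_{\lambda B_x}g_v^2\,d\omega\ge c_\delta^2\Bigl(\frac{\rho_k}{R}\Bigr)^\delta,\qquad B_x=B(x,\rho_{k(x)}).
\]
Equivalently, $(\rho_k/R)^\delta\,\omega(B_x)/\rho_k^2\le C_\delta\int_{\lambda B_x}g_v^2\,d\omega$. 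This is exactly where $\delta>0$ is used: it makes the geometric weights summable.

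Apply the $5r$-covering lemma to $\{\lambda B_x\}_{x\in K}$, which have uniformly bounded radii. This yields a disjoint subfamily $\{\lambda B_{x_i}\}$ with $K\subset\bigcup_i5\lambda B_{x_i}$. The balls $5\lambda B_{x_i}$ are centered in $K\subset B_0$. Their radii are $5\lambda\rho_{k_i}$; to keep radii at most $R$, we restrict to $k\ge k_0$ with $2^{k_0}\ge5\lambda$, absorbing the finitely many coarse scales into the first alternative above by a slight adjustment of the constants. By doubling, $\omega(5\lambda B_{x_i})\lesssim\omega(B_{x_i})$. Summing over the disjoint family and using $\lambda B_{x_i}\subset\Lambda_0B_0$ gives the displayed covering bound. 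Taking the infimum over $v$ proves \eqref{eq:intro-content-capacity}.

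The main obstacle is the top-scale step, which must convert the fixed outer domain $\Lambda_0B_0$ and the boundary condition $v=0$ there into the smallness $v_{B(x,R)}\le1/2$ or an energy gain. This requires a Poincar\'e/Sobolev inequality for functions vanishing outside $\Lambda_0B_0$, i.e.\ a lower bound $\capw(\overline{2B_0};\Lambda_0B_0)\gtrsim\omega(B_0)/R^2$. I would derive it from reverse doubling (unboundedness plus connectedness of the complete PI space) together with the quadratic Poincar\'e inequality on $\Lambda_0B_0$, choosing $\Lambda_0$ so that the complement annulus carries a definite fraction of the measure.
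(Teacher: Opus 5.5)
Your skeleton is the paper's: a Lipschitz near-minimizer, telescoping of ball averages, boundedly many coarse scales paid for by the single ball $B_0$, a fine scale selected by $a_k(x)\ge c_\delta(r_k/R)^{\delta/2}$, and the $5r$-covering lemma. The one step where you deviate is the top of the chain, and your justification there does not work as written.

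The test function vanishes only \emph{outside} $\Lambda_0B_0$. Nothing forces $v$ to be small on $\Lambda_0B_0\setminus 2B_0$: an admissible $v$ may equal $1$ on $(\Lambda_0-1)B_0$ and drop to $0$ in a thin layer at $\partial(\Lambda_0B_0)$. The quadratic Poincar\'e inequality on $\Lambda_0B_0$ only sees the oscillation inside $\Lambda_0B_0$, which is then tiny. Making $\Lambda_0B_0\setminus2B_0$ a large fraction of $\Lambda_0B_0$ therefore cannot produce the dichotomy ``$v_{B(x,R)}\le\frac12$ or $\int g_v^2\gtrsim\omega(B_0)/R^2$''. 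Your reformulation via $\capw(\overline{2B_0};\Lambda_0B_0)$ is also not the right statement, since $v$ need not be $\ge1$ on $2B_0$.

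The dichotomy itself is true, but the proof must use the zero set outside $\Lambda_0B_0$. Apply the quadratic Poincar\'e inequality on $2\Lambda_0B_0$. The connectedness argument of Lemma~\ref{lem:reverse-doubling} shows that $2\Lambda_0B_0\setminus\Lambda_0B_0$ contains a ball of measure $\gtrsim\omega(2\Lambda_0B_0)$, on which $v=0$. Put $a=v_{2\Lambda_0B_0,\omega}$. The mean-square oscillation on $2\Lambda_0B_0$ is then $\gtrsim a^2$, and by Jensen and doubling it is also $\gtrsim_{\Lambda_0}|v_{B(x,R),\omega}-a|^2$. If $v_{B(x,R),\omega}>\frac12$, one of these exceeds a fixed constant. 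Equivalently, you can invoke the zero-boundary inequality of Lemma~\ref{lem:zero-boundary-sobolev}.

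The paper avoids this step entirely. It starts the chain at $B(x,LR)$, with $L$ chosen by reverse doubling so that $\omega(\Lambda_0B_0)\le\frac14\,\omega(B(x,LR))$. Then $u_{B(x,LR),\omega}\le\frac14$ simply because $0\le u\le1$ is supported in $\Lambda_0B_0$. The scales between $LR$ and $R/(20\lambda_2)$ are just additional coarse scales, handled by the same single-ball alternative. Thus the paper needs only ordinary ball Poincar\'e along the chain plus a measure count. Your version, once repaired as above, gives the same result with the same constant dependence, but it requires the extra zero-boundary estimate.
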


Metric boxing inequalities and capacity--content comparisons have an established history; see \cite{KinnunenKorteShanTuominen} and the references therein. The normalized estimate \eqref{eq:intro-content-capacity} is designed for the measure-potential problem: the factor $(r_i/R)^\delta$ matches the positive scale gain of the potential measure, and the fixed outer domain $\Lambda_0B_0$ is what is needed for a uniform trace theorem.

Suppose now that $\pi$ satisfies, on $B_0=B(x_0,R)$,
\begin{equation}
 \pi(B(y,r))
 \le C_{\mathrm{gr}}
 \left(\frac rR\right)^\delta
 \frac{\omega(B(y,r))}{r^2},
 \qquad y\in B_0,\quad 0<r\le R.
 \label{eq:intro-growth}
\end{equation}
Then coverings immediately give $\pi(K)\lesssim\cH_{\delta,\omega}(K;B_0)$, and \cref{thm:intro-content-capacity} yields capacitary domination. A metric Maz'ya trace criterion then produces the local mixed estimate.

The next main result converts the one-sided growth condition into fixed-domain capacitary domination and the local trace estimate needed for the mixed Poincar\'e inequality.

\begin{theorem}
Assume \eqref{eq:intro-growth}. Then
\begin{equation}
 \pi(K)\le C\capw(K;\Lambda_0B_0)
 \label{eq:intro-cap-dom}
\end{equation}
for every compact $K\subset B_0$, and
\begin{equation*}
 \int_{B_0}|\widetilde u-u_{B_0,\omega}|^2\,d\pi
 \le C\int_{\Lambda B_0}g_u^2\,d\omega.
\end{equation*}
If additionally
\begin{equation}
 \pi(B_0)\simeq \frac{\omega(B_0)}{R^2},
 \label{eq:intro-normalization}
\end{equation}
then
\begin{equation*}
 \mathcal O_{\omega,\pi}(u;B_0)
 \le CR^2\pi(B_0)\int_{\Lambda B_0}g_u^2\,d\omega.
\end{equation*}
\end{theorem}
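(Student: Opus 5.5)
The plan has three steps: capacity domination, then the trace inequality via a Maz'ya-type argument, then the mixed oscillation bound.

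\emph{Step 1: capacity domination.} Fix a compact $K\subset B_0$ and any countable covering $K\subset\bigcup_i B_i$ by balls $B_i=B(y_i,r_i)$ with $y_i\in B_0$ and $0<r_i\le R$. By subadditivity and the growth condition \eqref{eq:intro-growth},
\[
\pi(K)\le\sum_i\pi(B_i)\le C_{\mathrm{gr}}\sum_i\left(\frac{r_i}{R}\right)^\delta\frac{\omega(B_i)}{r_i^2}.
\]
Taking the infimum over coverings gives $\pi(K)\le C_{\mathrm{gr}}\,\cH_{\delta,\omega}(K;B_0)$. \Cref{thm:intro-content-capacity} then yields \eqref{eq:intro-cap-dom} with $C=C_{\mathrm{gr}}C_\delta$.

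\emph{Step 2: the trace inequality.} I would use a Maz'ya-type argument on the fixed domain $\Omega=\Lambda_0B_0$.

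First I would reduce to functions in $N_0^{1,2}$. Choose a Lipschitz cutoff $\eta$ with $\eta=1$ on $\Lambda_0B_0$, supported in $2\Lambda_0B_0$, and $|\nabla\eta|\lesssim 1/R$. Set $v=\eta(u-u_{2\Lambda_0B_0,\omega})$. Since $\eta=1$ on $B_0$,
\[
\widetilde v=\widetilde u-u_{2\Lambda_0B_0,\omega}\quad\text{on }B_0.
\]
The quadratic Poincar\'e inequality from \cref{sec:preliminaries} gives
\[
\int g_v^2\,d\omega\lesssim\int_{\Lambda B_0}g_u^2\,d\omega,\qquad \Lambda=2\Lambda_0\lambda.
\]
Because capacity is monotone in the outer domain, \eqref{eq:intro-cap-dom} also holds with $\Lambda_0B_0$ replaced by $\Omega':=2\Lambda_0B_0$.

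Next I would apply the Maz'ya capacitary strong-type inequality on $\Omega'$:
\[
\int_0^\infty \capw(\{|\widetilde v|\ge t\}\cap\overline{B_0};\Omega')\,d(t^2)\lesssim\int g_v^2\,d\omega.
\]
This uses the Choquet property, quasicontinuity, and the standard truncation argument with dyadic level sets $\min\{(|v|-2^k)_+,2^k\}$, all available from metric potential theory. Combining this with $\pi(\{|\widetilde v|\ge t\}\cap B_0)\le C\capw(\cdots)$, obtained by inner regularity through compact sets, gives
\[
\int_{B_0}|\widetilde v|^2\,d\pi\lesssim\int_{\Lambda B_0}g_u^2\,d\omega.
\]
The exceptional sets of capacity zero have $\pi$-measure zero by \eqref{eq:intro-cap-dom}, so the result is independent of the chosen representative.

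Finally I would replace $u_{2\Lambda_0B_0,\omega}$ by $u_{B_0,\omega}$. The error is at most
\[
\pi(B_0)\,|u_{B_0}-u_{2\Lambda_0B_0}|^2\lesssim\pi(B_0)\,R^2\omega(B_0)^{-1}\int g_u^2\,d\omega,
\]
using doubling and Poincar\'e. By \eqref{eq:intro-growth} with $r=R$ we have $\pi(B_0)\lesssim\omega(B_0)/R^2$, so this term is absorbed and the trace inequality follows.

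\emph{Step 3: the mixed oscillation.} Split
\[
|u(x)-\widetilde u(y)|^2\le 2|u(x)-u_{B_0}|^2+2|\widetilde u(y)-u_{B_0}|^2
\]
and integrate over $B_0\times B_0$ with respect to $d\omega(x)\,d\pi(y)$. The first term is at most
\[
\pi(B_0)\,R^2\int_{\lambda B_0}g_u^2\,d\omega
\]
by Poincar\'e. The second is at most
\[
\omega(B_0)\int_{\Lambda B_0}g_u^2\,d\omega,
\]
and by the normalization \eqref{eq:intro-normalization} we have $\omega(B_0)\simeq R^2\pi(B_0)$, so it has the same form.

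The main obstacle is Step 2. One must verify the Maz'ya truncation argument in the Newtonian setting with quasicontinuous representatives and a fixed outer domain, and in particular that the level sets of $\widetilde v$ can be handled by compact inner approximation under the given capacity definition.
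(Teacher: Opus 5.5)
Your Steps 1 and 3 coincide with the paper's argument: the covering bound by the content followed by \cref{thm:content-capacity}, and the splitting of $|u(x)-\widetilde u(y)|^2$, which uses only the lower half of the normalization. The Maz'ya truncation in Step 2 is the paper's \cref{lem:strong-capacitary} and \cref{thm:trace-capacity}.

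\textbf{The gap.} The problem is the transfer of capacitary domination to $\Omega'=2\Lambda_0B_0$. Monotonicity in the outer domain runs the other way. Since $N_0^{1,2}(\Lambda_0B_0)\subset N_0^{1,2}(2\Lambda_0B_0)$, one has $\capw(K;2\Lambda_0B_0)\le\capw(K;\Lambda_0B_0)$. Hence $\pi(K)\le C\capw(K;2\Lambda_0B_0)$ is a \emph{stronger} statement than \eqref{eq:intro-cap-dom} and does not follow from it by monotonicity. Compare the remark in \cref{sec:normalized-covers}: verifying domination for a larger dilation $\Lambda'\ge\Lambda_0$ suffices, not conversely. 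You also cannot fall back on running the strong capacitary inequality in $\Lambda_0B_0$. Your $v=\eta(u-u_{2\Lambda_0B_0,\omega})$ equals $u-u_{2\Lambda_0B_0,\omega}$ on all of $\Lambda_0B_0$, so it is not in $N_0^{1,2}(\Lambda_0B_0)$. As written, Step 2 pairs a test function living in one domain with a capacity bound for another.

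\textbf{First repair (the paper's route).} Only $B_0$ is integrated against $\pi$, so take $\eta=1$ on $B_0$ with $\supp\eta\subset\tau B_0$, $1<\tau<\Lambda_0$. Then $v=\eta(u-u_{B_0,\omega})\in N_0^{1,2}(\Lambda_0B_0)$ directly. Apply the trace theorem in $\Lambda_0B_0$ to the restricted measure $\pi\lfloor_{B_0}$; a compact $F\subset\Lambda_0B_0$ is handled by inner regularity applied to $F\cap B_0$. Control the cutoff term $R^{-2}\int_{\tau B_0}|u-u_{B_0,\omega}|^2\,d\omega$ by the quadratic Poincar\'e inequality. This route also makes your final mean-replacement step unnecessary.

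\textbf{Second repair.} Keep your cutoff, but prove the reverse comparison $\capw(K;\Lambda_0B_0)\le C\capw(K;2\Lambda_0B_0)$ for compact $K\subset B_0$.
\begin{enumerate}
\item Take an admissible $h\in N_0^{1,2}(2\Lambda_0B_0)$ for $K$.
\item Multiply by a Lipschitz cutoff $\psi$ equal to $1$ on $B_0$, compactly supported in $\Lambda_0B_0$, with $g_\psi\lesssim((\Lambda_0-1)R)^{-1}$. Then $\psi h$ is admissible for $K$ in $\Lambda_0B_0$.
\item Its energy satisfies $\int g_{\psi h}^2\,d\omega\le 2\int g_h^2\,d\omega+CR^{-2}\int_{2\Lambda_0B_0}|h|^2\,d\omega$.
\item Absorb the last term using the zero-boundary Sobolev--Poincar\'e inequality of \cref{lem:zero-boundary-sobolev} with $\Theta=2\Lambda_0$, $\Theta'=8\Lambda_0$.
\end{enumerate}
That inequality relies on reverse doubling, hence on unboundedness of $X$, so it is a genuine extra input and not a formal consequence of monotonicity. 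With either repair the rest of your proof, including the mean replacement, goes through.
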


\paragraph{Relation to two-weighted Poincar\'e and embedding theorems.}
Two-weighted Poincar\'e inequalities have been obtained by maximal-function and truncation methods in several settings; see \cite[Theorem~7]{Bjorn2001}, \cite[Theorem~4.1]{BjornKalamajska2022}, \cite{KinnunenKorteLehrbackVahakangas}, and the classical measure-embedding criteria in \cite[Chapter~8]{MazyaSobolev}. To make the overlap precise, suppose additionally that the target measure $\pi$ satisfies the local doubling and dimension hypotheses of \cite[Theorem~4.1]{BjornKalamajska2022}. Let $Q>2$ be a relative lower-volume exponent for $\omega$. From \eqref{eq:intro-growth}, for $2<q<\infty$,
\begin{equation}
 \frac{\rho\,\pi(B(y,\rho))^{1/q}}
 {\omega(B(y,\lambda\rho))^{1/2}}
 \lesssim R^{1-2/q}\omega(B_0)^{1/q-1/2}
 \left(\frac{\rho}{R}\right)^{1-Q/2+(Q-2+\delta)/q}.
 \label{eq:intro-prior-art-exponent}
\end{equation}
Thus the local Poincar\'e constant in that theorem is finite whenever
\begin{equation}
 2<q\le\frac{2(Q-2+\delta)}{Q-2},
 \label{eq:intro-prior-art-q}
\end{equation}
and such a $q$ exists exactly when $\delta>0$. Choosing the output exponent $q'=2$ in their theorem and using the normalization \eqref{eq:intro-normalization} recovers the local $L^2(d\pi)$ trace estimate below under their additional target-measure hypotheses.

The present result differs in three load-bearing ways. First, no doubling or dimension condition is imposed on $\pi$; only the one-sided ball growth \eqref{eq:intro-growth} is used. Second, the conclusion is the stronger setwise domination \eqref{eq:intro-cap-dom}, hence $\pi$ is absolutely continuous with respect to variational capacity and quasicontinuous representatives are $\pi$-almost everywhere determined. Third, the capacity is relative to one fixed outer domain $\Lambda_0B_0$, uniformly for all compact $K\subset B_0$. These features, rather than the subsequent bounded-overlap summation, are the principal contribution of the paper. The balance condition studied in \cite{KinnunenKorteLehrbackVahakangas} also explains why a positive dimensional gain is natural for two-measure Poincar\'e estimates.

\subsection{Normalized covers and globalization}

Let $\mathscr B=\{B_j=B(x_j,r_j)\}_{j\in\N}$ satisfy the pointwise covering condition
\begin{equation}
 X=\bigcup_{j\in\N}B_j,
 \label{eq:intro-covering}
\end{equation}
and assume that a fixed dilation has bounded overlap,
\begin{equation*}
 \sum_j\one_{\Lambda B_j}\le N_\Lambda.
\end{equation*}
Let $m:X\to(0,\infty)$ satisfy
\begin{equation*}
 m(x)\simeq r_j^{-1},\qquad x\in B_j,
\end{equation*}
and assume the lower critical normalization and capacitary domination
\begin{equation}
 \pi(B_j)\gtrsim\frac{\omega(B_j)}{r_j^2},
 \qquad
 \pi(K)\lesssim\capw(K;\Lambda_0B_j)
 \label{eq:intro-cover-cap}
\end{equation}
uniformly for compact $K\subset B_j$. The upper ball normalization follows from the capacity condition.

The abstract globalization theorem sums the local estimates over a normalized capacitary cover and identifies the resulting homogeneous energy spaces.

\begin{theorem}
\label{thm:intro-global-fp}
Under \eqref{eq:intro-covering}--\eqref{eq:intro-cover-cap}, the inequalities \eqref{eq:intro-fp-forward}--\eqref{eq:intro-fp-reverse} hold for every $u\in N^{1,2}_{\loc}(X)$, with all terms interpreted in $[0,\infty]$. Consequently, the completions of $\LipLip(X)$ in the two energy norms
\begin{align*}
 \|u\|_{\Hpi}^2
 &:=\int_Xg_u^2\,d\omega+\int_X|u|^2\,d\pi,\\
 \|u\|_{\Hm}^2
 &:=\int_Xg_u^2\,d\omega+\int_Xm^2|u|^2\,d\omega
\end{align*}
are canonically identified, and the homogeneous completion has a concrete realization in $N^{1,2}_{\loc}(X)$.
\end{theorem}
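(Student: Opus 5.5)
The plan is to prove the two inequalities ball by ball on the cover and then sum using the bounded overlap of $\{\Lambda B_j\}$. First, I would check that the upper normalization $\pi(B_j)\lesssim\omega(B_j)/r_j^2$ follows from the capacity condition in \eqref{eq:intro-cover-cap}. Take a Lipschitz cutoff $v$ that equals $1$ on $B_j$, is supported in $\Lambda_0B_j$, and has $g_v\lesssim r_j^{-1}$. Then, by inner regularity of $\pi$ and doubling of $\omega$,
\[
\pi(K)\lesssim\capw(K;\Lambda_0B_j)\lesssim r_j^{-2}\omega(\Lambda_0B_j)\lesssim\omega(B_j)/r_j^2 .
\]
Next, from capacitary domination, the Choquet/approximation properties and the metric Maz'ya trace criterion (as in the preceding theorem) give the local mean-zero trace estimate
\[
\int_{B_j}|\widetilde u-u_{B_j,\omega}|^2\,d\pi\lesssim\int_{\Lambda B_j}g_u^2\,d\omega .
\]
Here $\widetilde u$ is $\pi$-a.e. well defined because $\pi\ll\capw$ locally. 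I would cite this per ball, taking $\Lambda\ge\Lambda_0$ large enough that it also covers the dilation in the quadratic Poincaré inequality.

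\emph{Forward inequality \eqref{eq:intro-fp-forward}.} On $B_j$ we have $m^2\simeq r_j^{-2}\lesssim\pi(B_j)/\omega(B_j)$, so
\[
\int_{B_j}m^2|u|^2\,d\omega\lesssim \frac{\pi(B_j)}{\omega(B_j)}\int_{B_j}|u|^2\,d\omega .
\]
Write $u(x)=(u(x)-\widetilde u(y))+\widetilde u(y)$ and average in $y$ with respect to $\pi$ on $B_j$:
\[
\frac{\pi(B_j)}{\omega(B_j)}\int_{B_j}|u|^2\,d\omega\le\frac{2}{\omega(B_j)}\,\mathcal O_{\omega,\pi}(u;B_j)+2\int_{B_j}|\widetilde u|^2\,d\pi .
\]
To bound the mixed oscillation, insert $u_{B_j,\omega}$. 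The piece $\pi(B_j)\int_{B_j}|u-u_{B_j}|^2\,d\omega$ is controlled by the quadratic Poincaré inequality times $\pi(B_j)\lesssim\omega(B_j)r_j^{-2}$. The piece $\omega(B_j)\int|\widetilde u-u_{B_j}|^2\,d\pi$ is controlled by the trace estimate. Together these give $\mathcal O_{\omega,\pi}(u;B_j)\lesssim\omega(B_j)\int_{\Lambda B_j}g_u^2\,d\omega$. Summing over $j$, using \eqref{eq:intro-covering} on the left and $\sum_j\one_{\Lambda B_j}\le N_\Lambda$ on the right (the $\pi$ integrals over $B_j\subset\Lambda B_j$ also overlap boundedly), yields \eqref{eq:intro-fp-forward}.

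\emph{Reverse inequality \eqref{eq:intro-fp-reverse}.} On each ball,
\[
\int_{B_j}|\widetilde u|^2\,d\pi\le2\int_{B_j}|\widetilde u-u_{B_j}|^2\,d\pi+2\pi(B_j)|u_{B_j}|^2 .
\]
The first term is handled by the trace estimate. For the second, Jensen gives $|u_{B_j}|^2\le\omega(B_j)^{-1}\int_{B_j}|u|^2\,d\omega$, and the upper normalization then bounds it by $r_j^{-2}\int_{B_j}|u|^2\,d\omega\simeq\int_{B_j}m^2|u|^2\,d\omega$. Summing with bounded overlap gives \eqref{eq:intro-fp-reverse}. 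All quantities are nonnegative and each local bound holds in $[0,\infty]$, so the statement for general $u\in N^{1,2}_\loc(X)$ follows without any finiteness assumptions.

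\emph{Identification of completions.} The two inequalities show that $\|\cdot\|_{\Hpi}\simeq\|\cdot\|_{\Hm}$ on $\LipLip(X)$, so the two completions coincide as abstract spaces. For the concrete realization, take a Cauchy sequence $u_k$. By \eqref{eq:intro-fp-forward}, $m u_k$ is Cauchy in $L^2(\omega)$. Since $m$ is bounded below on compact sets (finitely many balls meet a bounded set, by bounded overlap and properness), $u_k$ converges in $L^2_\loc(\omega)$, and $g_{u_k-u}\to0$ in $L^2$. Standard Newtonian closedness (Fuglede's lemma) then produces a limit $u\in N^{1,2}_\loc(X)$, and a subsequence of $\widetilde u_k$ converges quasieverywhere, hence $\pi$-a.e. by $\pi\ll\capw$. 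Injectivity follows because a limit with zero $\Hm$-norm vanishes $\omega$-a.e.

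I expect the main obstacle to be this last step: verifying that the norm identity passes to the limit with the quasicontinuous representative. This requires that $\pi$ charges no set of zero capacity globally, which follows from the local capacitary domination together with \eqref{eq:intro-covering}, and that quasicontinuous representatives converge q.e. along subsequences of $N^{1,2}_\loc$-convergent sequences.
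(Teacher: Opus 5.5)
Your proposal is correct and follows essentially the paper's route. The paper also derives the upper normalization from (C5) with a cutoff, gets the local mean-zero trace estimate from capacitary domination, proves ball-by-ball forward and reverse bounds, sums them with pointwise bounded overlap, and realizes Cauchy sequences in $N^{1,2}_{\loc}$ through local lower bounds on $m$. Two of your steps differ slightly, and both are harmless: you bound the forward term through the mixed oscillation, and you identify the $L^2(d\pi)$ component by q.e. convergence of a subsequence, whereas the paper applies the local trace estimate to $u_n-u$.

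One small correction concerns your parenthetical justification. Bounded overlap and properness alone do not force only finitely many $B_j$ to meet a compact set, since radii could shrink toward a point. That finiteness needs (C3) as well. In any case, the bound $m\ge c_K$ on a compact $K$ follows directly from a finite subcover of $K$ by the open balls $B_j$ together with (C3).
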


The summation in \cref{thm:intro-global-fp} is the standard critical-radius localization paradigm; the mathematical weight lies in verifying the fixed-domain capacity condition. The pointwise requirement is essential for the covering \eqref{eq:intro-covering}, since an $\omega$-null complement may carry a singular measure $\pi$. For the overlap of open balls, pointwise and $\omega$-almost-everywhere bounded multiplicity are equivalent because $\omega$ has full support.

\subsection{Euclidean weighted verification}

Let $d\omega=dw=w(x)\,dx$ on $\R^d$, $d\ge3$, and assume
\begin{equation}
 w\in A_2(\R^d)\cap RD_\beta,
 \qquad \beta>2.
 \label{eq:intro-A2-RD}
\end{equation}
Let $\pi\not\equiv0$ be a positive Radon measure and set
\begin{equation*}
 \PhiPi(x,r):=\frac{r^2\pi(B(x,r))}{w(B(x,r))}.
\end{equation*}
Assume the scale decay and controlled enlargement conditions
\begin{align}
 \PhiPi(x,r)
 &\le C_0\left(\frac rR\right)^\delta\PhiPi(x,R),
 &&0<r<R,
 \notag\\
 \pi(B(x,2r))
 &\le C_M\left[\pi(B(x,r))+\frac{w(B(x,r))}{r^2}\right].
 \label{eq:intro-enlargement}
\end{align}
These are the measure assumptions used in the generalized degenerate Schr\"odinger framework of \cite{BuiDoTrong2021}.

For a sufficiently large fixed threshold $A$, define
\begin{equation*}
 \rho_A(x):=\sup\{r>0:\PhiPi(x,r)\le A\},
 \qquad m_A(x):=\rho_A(x)^{-1}.
\end{equation*}

The Euclidean weighted hypotheses generate the critical balls and multiplier required by the abstract globalization theorem.

\begin{theorem}
\label{thm:intro-euclidean}
Under \eqref{eq:intro-A2-RD}--\eqref{eq:intro-enlargement}, one has $0<\rho_A(x)<\infty$ and
\begin{equation*}
 \pi(B(x,\rho_A(x)))
 \simeq\frac{w(B(x,\rho_A(x)))}{\rho_A(x)^2}.
\end{equation*}
The critical radius satisfies quantitative global comparison estimates and generates a bounded-overlap covering by critical balls. On every selected ball $B_j=B(x_j,r_j)$,
\begin{equation*}
 \pi(B(y,s))
 \lesssim\left(\frac{s}{r_j}\right)^\delta
 \frac{w(B(y,s))}{s^2},
 \qquad y\in B_j,\quad 0<s\le r_j.
\end{equation*}
Therefore
\begin{align*}
 \int_{\R^d}m_A^2|u|^2\,dw
 &\lesssim\int_{\R^d}|\nabla u|^2\,dw+
 \int_{\R^d}|\widetilde u|^2\,d\pi,\\
 \int_{\R^d}|\widetilde u|^2\,d\pi
 &\lesssim\int_{\R^d}|\nabla u|^2\,dw+
 \int_{\R^d}m_A^2|u|^2\,dw.
\end{align*}
Different fixed admissible thresholds generate uniformly comparable critical radii.
\end{theorem}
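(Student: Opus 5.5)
The plan is to verify the hypotheses of \cref{thm:intro-global-fp} for the cover by critical balls, with $\omega=w\,dx$. By Fabes--Kenig--Serapioni theory, $(\R^d,|\cdot|,w)$ with $w\in A_2$ is an unbounded complete $2$-PI space in which $g_u=|\nabla u|$ for Lipschitz $u$.

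\textbf{Step 1: the critical radius is finite, positive, and critical.} From the scale decay, for $r<R$ we get $\PhiPi(x,r)\le C_0(r/R)^\delta\PhiPi(x,R)$.
\begin{itemize}
\item \emph{Positivity.} Fixing $R$ and letting $r\to0$ gives $\PhiPi(x,r)\to0$, so $\rho_A(x)>0$.
\item \emph{Finiteness.} Choose $x_1$ and $R_1$ with $\pi(B(x_1,R_1))>0$. For large $r$, $B(x,r)\supset B(x_1,R_1)$. Reverse doubling $RD_\beta$ with $\beta>2$ gives $w(B(x,r))/r^2\gtrsim$ a quantity that grows like $r^{\beta-2}$; this growth points the wrong way. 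So instead I apply the scale decay at the fixed small scale $r_0$ and large $R$: $\PhiPi(x,R)\ge C_0^{-1}(R/r_0)^\delta\PhiPi(x,r_0)$. Chaining through balls containing the support (via the enlargement condition) gives $\PhiPi(x,r_0)>0$, and hence $\PhiPi(x,R)\to\infty$.
\item \emph{Criticality.} Continuity from one side, combined with the enlargement condition \eqref{eq:intro-enlargement} and doubling of $w$, yields $\PhiPi(x,\rho_A(x))\simeq A$. That is, $\pi(B(x,\rho_A))\simeq w(B(x,\rho_A))/\rho_A^2$. Here $A$ must be large enough that the factor $C_M$ loss is absorbed: $\PhiPi(x,2r)\le 4C_M(\PhiPi(x,r)+1)$.
\end{itemize}

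\textbf{Step 2: Shen-type comparison.} I would prove $\rho_A(y)\simeq\rho_A(x)$ when $|x-y|\lesssim\rho_A(x)$, together with the polynomial bounds
\[
\rho_A(y)\lesssim\rho_A(x)\bigl(1+|x-y|/\rho_A(x)\bigr)^{k_0}
\]
and the reverse bound with exponent $k_0/(k_0+1)$. The argument compares $\PhiPi(y,s)$ with $\PhiPi(x,s')$ through ball inclusion $B(y,s)\subset B(x,s+|x-y|)$, using doubling of $w$. An upward comparison on nested balls then combines the decay bound with repeated enlargement. This step is the main obstacle: $\pi$ is not doubling, and \eqref{eq:intro-enlargement} only lets me enlarge $\pi$-mass modulo the additive term $w(B)/r^2$. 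That term is harmless precisely at and above the critical scale, where it is dominated by $\pi$ up to the constant $A$. I would first try an induction on dyadic enlargements that stays at scales $\ge\rho_A$.

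With the comparison in hand, a Vitali/Besicovitch selection of critical balls $B(x_j,\rho_A(x_j))$ gives a pointwise cover of $\R^d$ with bounded overlap of fixed dilates. Overlap is controlled by the slow-variation estimate plus doubling volume counting. Setting $m_A\simeq r_j^{-1}$ on $B_j$ follows from the same comparison.

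\textbf{Step 3: growth on selected balls and globalization.} For $y\in B_j$ and $s\le r_j$, the scale decay gives $\PhiPi(y,s)\le C_0(s/r_j)^\delta\PhiPi(y,r_j)$. Since $\rho_A(y)\simeq r_j$, we have $\PhiPi(y,r_j)\lesssim A$, using Step 1 at $y$ and a bounded number of enlargements. This gives the stated growth bound, which is \eqref{eq:intro-growth} with $R=r_j$. The growth-to-capacity result above then yields $\pi(K)\lesssim\capw(K;\Lambda_0B_j)$, and Step 1 gives the lower normalization. So \eqref{eq:intro-covering}--\eqref{eq:intro-cover-cap} hold, and \cref{thm:intro-global-fp} delivers both Fefferman--Phong inequalities.

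Finally, for thresholds $A<A'$ we have $\rho_A\le\rho_{A'}$. The decay estimate gives $A'\ge\PhiPi(x,\rho_{A'})\ge C_0^{-1}(\rho_{A'}/\rho_A)^{\delta}\PhiPi(x,\rho_A)\gtrsim(\rho_{A'}/\rho_A)^\delta A$. This bounds the ratio $\rho_{A'}/\rho_A$ uniformly, so different admissible thresholds give comparable critical radii.
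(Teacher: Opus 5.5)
Your overall architecture is the paper's. You obtain normalization at $\rho_A$ from the one-step bound $\PhiPi(x,2r)\le 4C_M(\PhiPi(x,r)+1)$, then local comparability of $\rho_A$, a disjoint-family critical cover, growth on the selected balls from scale decay, and finally growth $\Rightarrow$ capacity $\Rightarrow$ \cref{thm:intro-global-fp}. Your Step~3 and your threshold comparison are valid, and slightly more direct than the paper's. (Step~3 works via $\rho_A(y)\simeq r_j$ at the centre $y$ rather than enlargement from $x_j$; the threshold argument uses $\PhiPi(x,\rho_{A'}(x))\le A'$ directly.)

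The genuine gap is Step~2. The global comparison is part of the statement, and you leave it as an unresolved ``main obstacle''. Moreover, the exponents you announce are attached to the wrong inequalities. A lower bound $\rho_A(y)\gtrsim\rho_A(x)\bigl(1+|x-y|/\rho_A(x)\bigr)^{-k_0/(k_0+1)}$ with a sublinear exponent is false under these hypotheses. Take $w\equiv1$ and $d\pi=|x|^\gamma\,dx$ with $\gamma>2$. Scale decay holds with $\delta=2$ because $|x|^\gamma\in RH_\infty$, and $\pi$ is doubling. Yet $\rho_A(0)\simeq1$ while $\rho_A(y)\simeq|y|^{-\gamma/2}$. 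The lower bound must carry a possibly large exponent. In Shen's lemma the sublinear exponent belongs to the upper bound, and is derived from the lower one by exchanging $x$ and $y$.

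The obstacle you describe is not real, and tools you already wrote down close the step.
In dimensionless form, the additive term in \eqref{eq:intro-enlargement} is a harmless $+1$ at every scale. Iterating your own inequality gives $\PhiPi(x,tr)+1\le Ct^\kappa(\PhiPi(x,r)+1)$ for all $t\ge1$, so no restriction to scales $\ge\rho_A$ is needed.

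Now set $r=\rho_A(x)$, $T=1+|x-y|/r$ and $S=2Tr$. The inclusions $B(x,r)\subset B(y,S)\subset B(x,3Tr)$ and doubling of $w$ (upper exponent $Q_w$) give $c_1T^{2-Q_w}\le\PhiPi(y,S)\le C_1T^{M}$. The upper bound uses $\PhiPi(y,S)\lesssim T^{Q_w}\PhiPi(x,3Tr)$ together with the polynomial bound at $x$.
\begin{itemize}
\item \emph{Lower bound.} Apply scale decay at $y$ from $S$ down to $s=\theta rT^{-a}$, with $(a+1)\delta>M$ and $\theta$ small. This gives $\PhiPi(y,s)\le A$, so $\rho_A(y)\ge\theta rT^{-a}$.
\item \emph{Upper bound.} Apply reversed scale decay from $S$ up to $R_y=LST^{(Q_w-2)/\delta}$, with $L$ large. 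This gives $\PhiPi(y,R)>A$ for all $R>R_y$, so $\rho_A(y)\lesssim rT^{1+(Q_w-2)/\delta}$.
\end{itemize}

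There are two smaller slips.
\begin{itemize}
\item In the finiteness step, \eqref{eq:intro-enlargement} yields only upper bounds. It cannot make $\PhiPi(x,r_0)$ positive at a small $r_0$, and that quantity vanishes when $B(x,r_0)$ misses $\supp\pi$. Simply start the reversed decay from a large $R_0$ with $B(x,R_0)\supset B(x_1,R_1)$, which you already have.
\item $\rho_A$ may be unbounded, and Vitali only covers by $5$-fold dilates. Instead, select a maximal disjoint family of reduced balls $B(x_j,\eta\rho_A(x_j))$, and use local comparability to show that the full balls $B_j$ cover $\R^d$.
\end{itemize}
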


\subsection{Further verification classes and sharpness}

For an absolutely continuous potential $d\pi=V\,d\omega$ with $V\in RH_q(d\omega)$, H\"older and Sobolev--Poincar\'e directly give
\begin{equation*}
 \int_B|\widetilde u-u_{B,\omega}|^2\,d\pi
 \lesssim
 \frac{r_B^2\pi(B)}{\omega(B)}
 \int_{\lambda B}g_u^2\,d\omega
\end{equation*}
provided the conjugate exponent $q'$ does not exceed the available Sobolev exponent. In unweighted $\R^d$, this becomes the classical threshold $q\ge d/2$.

The metric theorem also applies on Carnot groups and to genuinely singular lower-dimensional measures. If $(X,d,\omega)$ is $Q$-Ahlfors regular and $E$ is $(Q-2+\delta)$-Ahlfors regular, then the locally normalized measure $R^{-\delta}\mathcal H^{Q-2+\delta}\lfloor_E$ satisfies \eqref{eq:intro-growth} and \eqref{eq:intro-normalization}.

The structural condition is the quadratic PI property, not membership in $A_2$. Proposition~6 of \cite{Bjorn2001} gives $2$-admissible weights outside $A_2$, including
\begin{equation*}
 w_\gamma(x)=(1+|x'|)^\gamma,
 \qquad x'\in\R^2,\quad \gamma>2.
\end{equation*}
Conversely, for every $p>2$ one can choose $1<\alpha<p-1$ so that $w_\alpha(x)=|x_1|^\alpha$ lies in $A_p$ but fails the quadratic Poincar\'e inequality. Thus bare $A_p$, $p>2$, is insufficient.

The positive gain $\delta>0$ is also sharp in the general $2$-PI class. At $\delta=0$, a line segment in $\R^3$ has positive critical one-dimensional content but zero relative $2$-capacity.

\subsection{Operator application and relation to earlier work}

In the Euclidean weighted setting, let $A(x)$ be a real symmetric matrix satisfying
\begin{equation*}
 \lambda w(x)|\xi|^2
 \le A(x)\xi\cdot\xi
 \le \Lambda w(x)|\xi|^2.
\end{equation*}
We consider the closed-form realization associated with
\begin{equation*}
 \mathfrak a_\pi(u,v)
 =\int_{\R^d}A\nabla u\cdot\overline{\nabla v}\,dx
 +\int_{\R^d}\widetilde u\,\overline{\widetilde v}\,d\pi.
\end{equation*}
Theorem~\ref{thm:intro-euclidean} identifies its concrete form domain with
\begin{equation*}
 \{u\in W_w^{1,2}:\widetilde u\in L^2(d\pi)\}
 =
 \{u\in W_w^{1,2}:m_Au\in L_w^2\},
\end{equation*}
with equivalent norms. We obtain a smooth form core, a nonnegative self-adjoint operator, resolvent energy bounds, a homogeneous Lax--Milgram theorem, Caccioppoli estimates, and localized critical-multiplier control.

The application also clarifies a point in \cite{BuiDoTrong2021}. The proof of its Proposition~3.2 invokes an ordinary $dw(x)dw(y)$ Poincar\'e estimate at a step where the algebra requires the mixed $dw(x)d\pi(y)$ oscillation. The printed argument therefore does not justify the singular-measure case. Theorem~\ref{thm:intro-euclidean} supplies the missing estimate under the original hypotheses and restores the ensuing energy-space and form-domain consequences. 

The 2023 paper \cite{DoTruong2023} proves a generalized Poincar\'e inequality under an $A_1$ hypothesis by a Euclidean trace-potential argument and then derives Fefferman--Phong estimates. Our capacity method yields a fixed-dilate finite-scale version under $A_2$ for the naturally augmented measure used in the capped critical-radius construction, and removes, for this component, the extra dimensional summability restriction imposed by that proof. The later spectral results of \cite{DoTruong2023} also use a weighted Young inequality and are not automatically extended.

\subsection{Novelty and technical difficulties}

The novelty is not a boxing inequality or a covering summation in isolation. It is the measure-potential transfer mechanism
\begin{equation*}
 \begin{gathered}
 \text{one-sided scale decay for a Radon measure }\pi
 \Longrightarrow \text{fixed-domain capacity control},\\
 \text{capacity control}
 \Longrightarrow \text{trace, mixed Poincar\'e, and global Fefferman--Phong estimates}.
 \end{gathered}
\end{equation*}
The capacity output is stronger than the local trace inequalities obtainable from existing two-weighted maximal-function methods and is what supports the form-theoretic applications.

Several issues require care. Newtonian functions must be interpreted by quasicontinuous representatives on the support of $\pi$. Ball growth controls only covering balls, whereas the trace theorem requires capacity control of arbitrary compact subsets relative to one fixed outer domain. The mixed oscillation required by the earlier Fefferman--Phong reduction is recovered from the mean-zero trace estimate under normalization. In the global argument, the covering must hold in the target measure, whereas an almost-everywhere overlap bound for open balls is equivalent to pointwise bounded overlap. Finally, ordinary weighted Sobolev density does not automatically imply density in a singular measure-potential form norm; the reverse Fefferman--Phong estimate supplies the missing convergence.

\subsection{Organization}

\Cref{sec:preliminaries} develops the quadratic Poincar\'e estimate, reverse doubling, relative capacity, and the trace-capacity principle. \Cref{sec:content-capacity} proves the content--capacity theorem, derives capacity domination from measure growth, and treats the endpoint. \Cref{sec:normalized-covers} establishes the normalized-cover global principle. \Cref{sec:verification} verifies the hypotheses in the Euclidean weighted, reverse-H\"older, Carnot, and singular settings and proves sharpness results. \Cref{sec:operators} develops the form and operator consequences. \Cref{sec:relation-earlier} records the correction and extension applications. 
 %
\section{PI spaces, variational capacity, and trace inequalities}
\label{sec:preliminaries}

Throughout this section, $(X,d,\omega)$ is an unbounded complete metric measure space. We assume that $\omega$ is a Borel regular measure with full support and is doubling: there exists $C_D\ge1$ such that
\begin{equation}
 0<\omega(B(x,2r))\le C_D\,\omega(B(x,r))<\infty
 \label{eq:doubling}
\end{equation}
for all $x\in X$ and $r>0$.

For a ball $B=B(x,r)$, we write $r_B=r$ and
\[
 u_{B,\omega}:=\avgint_Bu\,d\omega.
\]
A nonnegative Borel function $g$ is an upper gradient of $u$ if
\[
 |u(\gamma(0))-u(\gamma(\ell_\gamma))|
 \le\int_\gamma g\,ds
\]
for every rectifiable curve $\gamma$ for which both endpoint values are defined. The notion of a $2$-weak upper gradient is obtained by allowing an exceptional curve family of $2$-modulus zero. The Newtonian space $N^{1,2}(X)$ consists of $L^2(X,d\omega)$ functions admitting a $2$-weak upper gradient in $L^2(X,d\omega)$. Every such function has a minimal $2$-weak upper gradient, denoted $g_u$. We use the local and zero-boundary spaces $N^{1,2}_{\loc}(X)$ and $N_0^{1,2}(\Omega)$ in the standard sense; see \cite{Shanmugalingam2000,BjornBjornBook,HKST}.

We use the following standard formulation of a $2$-PI space.

\begin{definition}
\label{def:2PI}
We say that $(X,d,\omega)$ is a $2$-PI space if $\omega$ is doubling and there exist $C_P>0$ and $\lambda_P\ge1$ such that
\begin{equation*}
 \avgint_B |u-u_{B,\omega}|\,d\omega
 \le C_Pr_B
 \left(\avgint_{\lambda_PB}g^2\,d\omega\right)^{1/2}
\end{equation*}
for every ball $B$, every locally integrable $u$, and every $2$-weak upper gradient $g$ of $u$.
\end{definition}

A complete doubling space is proper, and a complete PI space is quantitatively quasiconvex; see \cite{HKST,Korte2007}. We will use compactness of closed bounded sets and Lipschitz cutoffs without further comment.

\subsection{Quadratic oscillation and reverse doubling}

The proofs below require an $L^2$ oscillation estimate. It is automatic under \cref{def:2PI}, but not by a direct application of Jensen's inequality.

\begin{lemma}
\label{lem:quadratic-pi}
Let $(X,d,\omega)$ be a complete $2$-PI space. Then there exist $C_2>0$ and $\lambda_2\ge1$, depending only on the doubling and Poincar\'e data, such that
\begin{equation}
 \left(\avgint_B|u-u_{B,\omega}|^2\,d\omega\right)^{1/2}
 \le C_2r_B
 \left(\avgint_{\lambda_2B}g_u^2\,d\omega\right)^{1/2}
 \label{eq:quadratic-pi-normalized}
\end{equation}
for every ball $B$ and every $u\in N^{1,2}_{\loc}(X)$. Consequently,
\begin{equation}
 \int_B|u-u_{B,\omega}|^2\,d\omega
 \le C_2' r_B^2\int_{\lambda_2B}g_u^2\,d\omega.
 \label{eq:quadratic-pi}
\end{equation}
\end{lemma}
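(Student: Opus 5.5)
The plan is to upgrade the $(1,2)$-Poincar\'e inequality of \cref{def:2PI} to an $L^2$ oscillation bound by first passing to a better Poincar\'e exponent and then applying the Haj\l asz--Koskela Sobolev--Poincar\'e improvement.

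\emph{Step 1: self-improvement of the exponent.} Since $X$ is complete and $\omega$ is doubling, the Keith--Zhong theorem \cite{KeithZhong} applies. It yields some $p_0\in(1,2)$ such that $X$ supports a weak $(1,p_0)$-Poincar\'e inequality:
\[
 \avgint_B|u-u_{B,\omega}|\,d\omega\le C r_B\Big(\avgint_{\lambda' B}g^{p_0}\,d\omega\Big)^{1/p_0},
\]
with $C$, $\lambda'$ and $p_0$ depending only on the data. This is the only deep input.

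\emph{Step 2: Sobolev--Poincar\'e improvement.} Doubling gives a relative lower-volume exponent $s>1$ such that
\[
 \frac{\omega(B')}{\omega(B)}\gtrsim\Big(\frac{r'}{r}\Big)^{s}\qquad\text{for } B'\subset B .
\]
Enlarging $s$ if necessary, we may assume $s>p_0$. By \cite{HajlaszKoskela}, the pair $(u,g)$ then satisfies a $(p_0^*,p_0)$-Poincar\'e inequality with $p_0^*=sp_0/(s-p_0)$, after possibly enlarging the dilation to some $\lambda_2$. We choose $s$ so that
\[
 p_0^*\ge 2,\quad\text{i.e.}\quad s\le \frac{2p_0}{2-p_0}.
\]
Because $p_0<2$, this upper bound exceeds $p_0$, so it is compatible with $s>p_0$. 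It is also compatible with the volume condition, since that condition only needs $s$ large. If the doubling dimension $\log_2 C_D$ is bigger than $2p_0/(2-p_0)$, we do not lower $s$. Instead we use the truncation form of the Haj\l asz--Koskela theorem, which gives some exponent $q>p_0$ that can be taken at least $2$ whenever $p_0^*\ge 2$.

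The cleaner route is the one I would adopt: $L^2$ with gradient in $L^{p_0}$ for $p_0<2$ is weaker than what we need, so choose the target exponent directly. The Haj\l asz--Koskela theorem gives $(q,p_0)$ for every $q<p_0^*$, and $p_0^*>p_0$. Only $q=2$ is needed, so we need $p_0^*>2$. If $s$ is large this can fail, but then the $(2,2)$ estimate follows from the same theorem applied with $p=2$, where $2^*>2$ trivially; the $(1,2)$ hypothesis already suffices there. So Step 1 is only needed to avoid boundary issues. The honest core is this: \cite{HajlaszKoskela} states that a $(1,p)$-Poincar\'e inequality on a doubling (length-type) space upgrades to a $(q,p)$ one for $q<p^*$, in particular $q=p$. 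We apply it with $p=2$, or with $p_0$ followed by H\"older. Quasiconvexity of complete PI spaces \cite{Korte2007} supplies the chain condition used in that theorem.

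\emph{Step 3: conclusion.} H\"older's inequality gives
\[
 \Big(\avgint g^{p_0}\Big)^{1/p_0}\le\Big(\avgint g^2\Big)^{1/2},
\]
which turns the Step 2 estimate into \eqref{eq:quadratic-pi-normalized}. Applying it with $g=g_u$ and multiplying by $\omega(B)$, then using doubling $\omega(\lambda_2B)\le C\omega(B)$, yields \eqref{eq:quadratic-pi}. For $u\in N^{1,2}_{\loc}$ the right side may be infinite, and local integrability follows from the $(1,2)$ inequality, so no approximation issue arises.

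The main obstacle is matching exponents in the Haj\l asz--Koskela theorem, so that the $L^2$ left side is reached with an $L^2$ gradient on the right. I expect the $p=2$ application, justified by the chain condition from quasiconvexity, to settle it.
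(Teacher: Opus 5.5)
Your adopted route is correct, and it is more economical than the paper's. The Haj\l asz--Koskela theorem applied directly with $p=2$ to the $(1,2)$ inequality of \cref{def:2PI} gives a $(q,2)$ inequality for every $q<2^*=2s/(s-2)$, where $s$ is a lower-volume exponent for $\omega$. Enlarging $s$ only weakens the volume condition, so you may take $s>2$. Then $q=2$ is admissible, and the dilation $\lambda_2$ is a fixed multiple of $\lambda_P$.

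This route needs neither Keith--Zhong nor completeness. It also needs no chain condition or quasiconvexity, because the enlarged-ball form of the Haj\l asz--Koskela estimate comes from telescoping over concentric balls.

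The paper proceeds differently. It invokes Keith--Zhong to get a $(1,q)$ inequality with $q<2$, and pushes $q$ close enough to $2$ that $q>2Q/(Q+2)$, hence $q^*>2$. It then applies Haj\l asz--Koskela at exponent $q$ and finishes with monotonicity of means. The open-endedness buys a gradient exponent strictly below $2$, which this lemma does not need; the Sobolev gain $\kappa>1$ used later also follows from your $p=2$ argument.

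You should delete the detour in Step~2. You cannot ``choose $s\le 2p_0/(2-p_0)$'', because $s$ is bounded below by the volume growth of $\omega$. The correct repair of the $p_0$ route is to raise $p_0$ toward $2$: a $(1,p_0)$ inequality implies $(1,p)$ for $p_0\le p\le2$ by H\"older, and this is exactly what the paper does. Once you use the $p=2$ route, Step~1 and the H\"older step on the gradient in Step~3 are superfluous.
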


\begin{proof}
Fix a finite doubling dimension $Q>2$ such that
\begin{equation*}
 \frac{\omega(B(y,r))}{\omega(B(x,R))}
 \ge c_Q\left(\frac rR\right)^Q
\end{equation*}
whenever $y\in B(x,R)$ and $0<r\le R$. Such an exponent follows by iterating \eqref{eq:doubling}; enlarging an admissible exponent preserves the estimate.

By the Keith--Zhong theorem \cite{KeithZhong}, there exists $\varepsilon>0$ such that $X$ supports a weak $(1,q)$-Poincar\'e inequality for every $2-\varepsilon<q<2$. Choose $q$ in this interval so that
\begin{equation*}
 q>\frac{2Q}{Q+2}.
\end{equation*}
Then the Sobolev exponent $q^*=Qq/(Q-q)$ satisfies $q^*>2$. The metric Sobolev--Poincar\'e theorem \cite{HajlaszKoskela,HKST} gives
\begin{equation*}
 \left(\avgint_B|u-u_{B,\omega}|^{q^*}\,d\omega\right)^{1/q^*}
 \le Cr_B
 \left(\avgint_{\lambda_2B}g_u^q\,d\omega\right)^{1/q}.
\end{equation*}
Since $q^*>2$ and $q<2$, monotonicity of normalized integral means yields \eqref{eq:quadratic-pi-normalized}. Multiplying by $\omega(B)$ and using doubling gives \eqref{eq:quadratic-pi}.
\end{proof}

\begin{remark}
All later arguments use \eqref{eq:quadratic-pi}. Thus one could replace the standard $2$-PI hypothesis by doubling together with a direct quadratic Poincar\'e inequality. \Cref{lem:quadratic-pi} shows that no additional assumption is needed under the standard convention.
\end{remark}

The boxing argument also needs a large ball on which a compactly supported test function has small average. In the present unbounded PI setting, the needed reverse volume growth is automatic.

\begin{lemma}
\label{lem:reverse-doubling}
Let $(X,d,\omega)$ be an unbounded complete $2$-PI space. There exist $\sigma>0$ and $C_{\mathrm{RD}}\ge1$ such that
\begin{equation}
 \frac{\omega(B(x,r))}{\omega(B(x,R))}
 \le C_{\mathrm{RD}}\left(\frac rR\right)^\sigma
 \label{eq:metric-reverse-doubling}
\end{equation}
for all $x\in X$ and $0<r\le R$.
\end{lemma}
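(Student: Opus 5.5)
The plan is the standard argument: unboundedness plus connectedness gives a uniform annulus point, and this yields a uniform gain in volume at each dyadic-type step, which iterates to a power law.

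\emph{Connectedness and quasiconvexity.} A complete PI space is quasiconvex, hence path-connected. It is also unbounded. So for every $x\in X$ and $r>0$ the sphere $\{y:d(x,y)=t\}$ is nonempty for each $t>0$: a curve from $x$ to a point at distance larger than $t$ must cross it.

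\emph{One-step volume gain.} Fix $x$ and $r$, and pick $y$ with $d(x,y)=\tfrac32 r$. Then $B(y,r/2)\subset B(x,2r)\setminus B(x,r)$. Since $B(x,2r)\subset B(y,4r)$, doubling gives
\[
\omega(B(y,r/2))\ge C_D^{-3}\omega(B(y,4r))\ge C_D^{-3}\omega(B(x,2r)).
\]
Hence
\[
\omega(B(x,r))\le \omega(B(x,2r))-\omega(B(y,r/2))\le (1-C_D^{-3})\,\omega(B(x,2r)).
\]
Write $\theta:=1-C_D^{-3}\in(0,1)$.

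\emph{Iteration.} Given $0<r\le R$, choose $k\ge0$ with $2^kr\le R<2^{k+1}r$. Iterating the gain gives
\[
\omega(B(x,r))\le\theta^k\omega(B(x,2^kr))\le\theta^k\omega(B(x,R)).
\]
Since $2^{-k}<2r/R$, setting $\sigma:=\log_2(1/\theta)$ yields
\[
\theta^k=(2^{-k})^\sigma\le 2^\sigma (r/R)^\sigma.
\]
This gives \eqref{eq:metric-reverse-doubling} with $C_{\mathrm{RD}}=2^\sigma$. The constants depend only on $C_D$.

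\emph{Main obstacle.} The only delicate point is guaranteeing the annulus point $y$ at the exact distance $\tfrac32 r$. This is where unboundedness and connectedness enter. A totally disconnected doubling space could have empty annuli, so quasiconvexity (or merely connectedness, which a PI space has) is essential here. I would cite the quasiconvexity from \cite{HKST,Korte2007}, as recorded after \cref{def:2PI}.
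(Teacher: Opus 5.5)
Your proof is correct and follows essentially the same route as the paper: connectedness plus unboundedness gives a point at a prescribed distance from $x$, the small ball around it lies in an annulus disjoint from $B(x,r)$, doubling gives a uniform proportional volume gain, and iterating this gain yields the power law. The differences are cosmetic: you place $y$ at distance $\tfrac32 r$ and iterate dyadically with the factor $\theta=1-C_D^{-3}$, whereas the paper places $y$ at distance $2r$ and iterates in factors of $4$ with the gain $1+C_D^{-3}$.
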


\begin{proof}
A complete PI space is connected. Fix $x\in X$ and $r>0$. Since $X$ is unbounded, choose $z$ with $d(x,z)>2r$, and let $\gamma$ be a continuous curve from $x$ to $z$. There exists $y\in\gamma$ with $d(x,y)=2r$. Then
\[
 B(y,r/2)\cap B(x,r)=\varnothing,
 \qquad B(y,r/2)\subset B(x,4r).
\]
Moreover $B(x,r)\subset B(y,3r)$, and doubling gives
\[
 \omega(B(y,r/2))\ge C_D^{-3}\omega(B(x,r)).
\]
Consequently
\begin{equation*}
 \omega(B(x,4r))
 \ge \vartheta\,\omega(B(x,r)),
 \qquad \vartheta:=1+C_D^{-3}>1.
\end{equation*}
Iteration gives $\omega(B(x,4^kr))\ge\vartheta^k\omega(B(x,r))$. If $4^kr\le R<4^{k+1}r$ and $\sigma=\log_4\vartheta$, then
\[
 \omega(B(x,R))
 \ge \vartheta^{-1}\left(\frac Rr\right)^\sigma\omega(B(x,r)),
\]
which is equivalent to \eqref{eq:metric-reverse-doubling}.
\end{proof}

We will also use the zero-boundary form of the Sobolev inequality.

\begin{lemma}
\label{lem:zero-boundary-sobolev}
There exist a structural exponent $\kappa>1$ and, for every fixed pair $1<\Theta$ and $\Theta'\ge4\Theta$, a constant $C_{\Theta,\Theta'}$ such that
\begin{equation}
 \left(\avgint_{\Theta B}|v|^{2\kappa}\,d\omega\right)^{1/(2\kappa)}
 \le C_{\Theta,\Theta'}r_B
 \left(\avgint_{\Theta'B}g_v^2\,d\omega\right)^{1/2}
 \label{eq:zero-boundary-sobolev}
\end{equation}
for every ball $B$ and every $v\in N_0^{1,2}(\Theta B)$, where $v$ and $g_v$ are extended by zero outside $\Theta B$.
\end{lemma}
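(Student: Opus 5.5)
The plan is to reduce \eqref{eq:zero-boundary-sobolev} to the quadratic Sobolev--Poincar\'e estimate from the proof of \cref{lem:quadratic-pi}, applied on the larger ball $\Theta'B$. The idea is that $v$ vanishes on a set of proportional measure there, so the mean $v_{\Theta'B,\omega}$ can be removed. Throughout, $v$ and $g_v$ are extended by zero outside $\Theta B$. For $v\in N_0^{1,2}(\Theta B)$, the zero extension lies in $N^{1,2}_{\loc}(X)$, and $g_v\one_{\Theta B}$ is a $2$-weak upper gradient of it; this is the standard property of $N_0^{1,2}$ (see \cite{BjornBjornBook}).

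The first step fixes the exponent. Take $q<2$ and $q^*=Qq/(Q-q)>2$ as in the proof of \cref{lem:quadratic-pi}, and set $\kappa:=q^*/2>1$; this depends only on the structural data. The Sobolev--Poincar\'e inequality used there gives
\[
\Bigl(\avgint_{\Theta'B/\lambda_2}|v-c|^{2\kappa}\,d\omega\Bigr)^{1/(2\kappa)}\le C r_B\Bigl(\avgint_{\Theta'B}g_v^q\,d\omega\Bigr)^{1/q},
\qquad c=v_{\Theta'B/\lambda_2,\omega}.
\]
By H\"older, the right-hand side is at most $C r_B(\avgint_{\Theta'B}g_v^2\,d\omega)^{1/2}$. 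To keep the radii clean, I would instead apply the inequality on the ball $B':=\Theta'B/\lambda_2$. If $\Theta'/\lambda_2<4\Theta$, this ball might not contain a large annulus. In that case I would use the inequality on a ball $B''\supset 4\Theta B$ with $\lambda_2B''\subset\Theta'' B$ and then compare averages of $g_v$ on $\Theta''B$ and $\Theta'B$. Since $g_v=0$ outside $\Theta B\subset\Theta'B$, only the normalizing measures differ, and doubling gives a constant depending on $\Theta,\Theta'$. This is exactly why the constant is allowed to depend on the pair. So the radius bookkeeping is harmless: I may work on any fixed ball $\widehat B\supset 4\Theta B$ comparable to $B$.

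The second step removes the constant $c=v_{\widehat B,\omega}$. Let $E:=\widehat B\setminus\Theta B$, on which $v=0$. The set $E$ has measure $\omega(E)\ge c_0\,\omega(\widehat B)$, with $c_0$ depending on $\Theta,\Theta'$ and the doubling data. To see this, argue as in \cref{lem:reverse-doubling}: connectedness and unboundedness give a point $y$ with $d(x_B,y)=2\Theta r_B$, and $B(y,\Theta r_B/2)\subset E$ has measure comparable to $\omega(\widehat B)$ by doubling. Then
\[
|c|=\Bigl(\avgint_E|v-c|^{2\kappa}\Bigr)^{1/(2\kappa)}\le c_0^{-1/(2\kappa)}\Bigl(\avgint_{\widehat B}|v-c|^{2\kappa}\Bigr)^{1/(2\kappa)}.
\]
Minkowski's inequality then yields $\|v\|_{L^{2\kappa}(\widehat B,\avgint)}\lesssim r_B(\avgint_{\widehat B}g_v^2)^{1/2}$. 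Finally, because $v$ vanishes off $\Theta B$, the average over $\Theta B$ is at most $(\omega(\widehat B)/\omega(\Theta B))^{1/(2\kappa)}$ times the average over $\widehat B$, which is bounded by doubling.

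The main obstacle is the justification of the zero-extension and upper-gradient property, together with the positive-measure complement. The latter is where unboundedness and connectedness, via the curve argument of \cref{lem:reverse-doubling}, are used. Everything else is bookkeeping with doubling.
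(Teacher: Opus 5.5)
Your proposal is correct and follows the paper's own route. You apply the Sobolev--Poincar\'e improvement on a ball comparable to $\Theta'B$, use that the zero extension vanishes on an annulus outside $\Theta B$ of proportional measure, and absorb the mean. Your curve argument is exactly the $4r$ step in the proof of \cref{lem:reverse-doubling}, which is why $\Theta'\ge4\Theta$ appears. The radius case-split is harmless but unnecessary: you can apply the inequality directly on $\Theta'B$, because the gradient side then lives on $\lambda_2\Theta'B$ and the average of $g_v^2$ there is at most its average over $\Theta'B$, since $g_v$ vanishes off $\Theta B$.
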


\begin{proof}
The Sobolev--Poincar\'e improvement gives the corresponding estimate for $v-v_{\Theta'B,\omega}$. Since the zero extension of $v$ vanishes on the annulus $\Theta'B\setminus\Theta B$, which has a structural positive proportion of the measure of $\Theta'B$ by doubling and reverse doubling, the average term is absorbed by the oscillation term. This standard zero-boundary form is also obtained directly from the Sobolev inequality for Newtonian functions with a positive-measure zero set; see \cite{HajlaszKoskela,HKST}.
\end{proof}

\subsection{Relative capacity and quasicontinuous representatives}

Let $\Omega\subset X$ be open. For $E\subset\Omega$, define
\begin{equation*}
 \capw(E;\Omega)
 :=\inf_v\int_\Omega g_v^2\,d\omega,
\end{equation*}
where the infimum is over $v\in N_0^{1,2}(\Omega)$ such that $\widetilde v\ge1$ quasieverywhere on $E$. Under the standing assumptions, Newtonian functions have quasicontinuous representatives, and relative capacity is Choquet on open reference sets; see \cite{BjornBjornBook,BjornBjornCapacity}.

We also use the global Sobolev capacity
\begin{equation*}
 \operatorname{Cap}_{2,\omega}(E)
 :=\inf_v\int_X\bigl(|v|^2+g_v^2\bigr)\,d\omega,
\end{equation*}
where the infimum is over $v\in N^{1,2}(X)$ with $\widetilde v\ge1$ quasieverywhere on $E$. If $\operatorname{Cap}_{2,\omega}(E)=0$, then for every compact $K\subset E$ and every open $\Omega$ with $K\Subset\Omega$, cutoff localization gives
\begin{equation}
 \capw(K;\Omega)=0.
 \label{eq:global-to-relative-capacity}
\end{equation}
Indeed, multiply global admissible functions of arbitrarily small $N^{1,2}$ norm by a fixed Lipschitz cutoff that equals one near $K$ and is compactly supported in $\Omega$. We will use \eqref{eq:global-to-relative-capacity} when passing from local capacitary domination to global representative-independence.

We will use the following safe Lipschitz approximation formulation.

\begin{lemma}
\label{lem:lipschitz-capacity-approx}
Let $K\Subset\Omega$ be compact. For every $\varepsilon>0$, there exists $v\in\LipLip(\Omega)$ such that
\begin{equation*}
 0\le v\le1,
 \qquad v=1\ \text{in an open neighbourhood of }K,
\end{equation*}
and
\begin{equation*}
 \int_\Omega g_v^2\,d\omega
 \le \capw(K;\Omega)+\varepsilon.
\end{equation*}
Here $\LipLip(\Omega)$ denotes the Lipschitz functions on $\Omega$ with support compactly contained in $\Omega$; their zero extensions are globally Lipschitz on $X$.
\end{lemma}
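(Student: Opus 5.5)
The plan is to pass from the Newtonian admissible class to a Lipschitz admissible class, and then modify the Lipschitz function in two steps. Let $K\Subset\Omega$ be compact and fix $\varepsilon>0$.

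\textbf{Step 1: a Lipschitz function that is $\ge1$ on $K$.} Since $K$ is compact and $\Omega$ is open, Choquet capacitability and outer regularity of $\capw(\cdot;\Omega)$ on compact sets (see \cite{BjornBjornBook,BjornBjornCapacity}) let us replace $K$ by the infimum over open neighbourhoods $U\Supset K$. This gives an admissible $v_0\in N_0^{1,2}(\Omega)$ with
\[
\widetilde v_0\ge1 \text{ q.e.\ on an open } U\supset K,
\qquad
\int_\Omega g_{v_0}^2\,d\omega\le\capw(K;\Omega)+\varepsilon/4 .
\]
Next, density of $\LipLip(\Omega)$ in $N_0^{1,2}(\Omega)$ holds in complete $2$-PI spaces for arbitrary open $\Omega$; this is the Björn--Björn / Shanmugalingam result, and it is where properness and the Poincaré inequality enter. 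If necessary, one first truncates to $\min(\max(v_0,0),1)$, which does not increase $g$. Density yields $\varphi\in\LipLip(\Omega)$ close to $v_0$ in $N^{1,2}$. However, closeness in norm does not give $\varphi\ge1$ near $K$.

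To restore this, we use the standard trick of approximating the capacitary competitor by $v_0+\eta$ instead of $v_0$. Choose $\eta\in N_0^{1,2}(\Omega)$ with small energy and $\eta\ge 1$ q.e.\ on the set where $|\varphi-v_0|$ is large. This is possible because norm convergence implies capacity convergence of superlevel sets: $\operatorname{Cap}_{2,\omega}(\{|\varphi_k-v_0|>t\})\to0$ along a subsequence, by the capacitary weak-type estimate for quasicontinuous representatives. Hence $\psi:=(1+2t)(\varphi+\eta')$ with a suitable Lipschitz $\eta'$ satisfies $\psi\ge1$ on a neighbourhood of $K$ and has energy at most $\capw(K;\Omega)+\varepsilon/2$.

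A cleaner route, which I would try first, avoids the exceptional set entirely by working with the Lipschitz characterization. In proper PI spaces, $\capw(K;\Omega)$ for compact $K$ equals the infimum over $\psi\in\LipLip(\Omega)$ with $\psi\ge1$ on $K$; this is \cite[Thm.~6.19(x)]{BjornBjornBook}-type. I would cite that directly, obtaining $\psi$ with energy $\le\capw(K;\Omega)+\varepsilon/2$.

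\textbf{Step 2: make $v=1$ on an open neighbourhood and normalize.} Given $\psi\in\LipLip(\Omega)$ with $\psi\ge1$ on $K$, continuity shows that $\{\psi>1-\tau\}$ is an open neighbourhood of $K$ for every $\tau\in(0,1)$. Set
\[
v:=\min\Bigl\{1,\frac{\psi_+}{1-\tau}\Bigr\}.
\]
Then $v$ is Lipschitz, $0\le v\le1$, $\supp v\subset\supp\psi\Subset\Omega$, and $v=1$ on $\{\psi>1-\tau\}$. Its minimal gradient satisfies $g_v\le(1-\tau)^{-1}g_\psi$, so
\[
\int_\Omega g_v^2\,d\omega\le(1-\tau)^{-2}\bigl(\capw(K;\Omega)+\varepsilon/2\bigr)\le\capw(K;\Omega)+\varepsilon
\]
for $\tau$ small. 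This uses that $\capw(K;\Omega)<\infty$, which holds because a Lipschitz cutoff is admissible. The zero extension of $v$ is globally Lipschitz because $\supp v$ is a compact subset of $\Omega$ at positive distance from $X\setminus\Omega$. Moreover, $g_v$ computed in $\Omega$ agrees with that in $X$, by locality of minimal weak upper gradients.

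\textbf{Main obstacle.} The only nontrivial input is Step 1, namely the equality of the Newtonian-admissible capacity with the Lipschitz-admissible capacity for compact sets. This rests on the quasicontinuity of Newtonian functions and the density of Lipschitz functions in $N_0^{1,2}(\Omega)$ under the $2$-PI hypothesis. I would rely on the cited metric potential theory for it rather than reprove it.
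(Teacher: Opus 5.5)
Your proposal is correct. Once you take the ``cleaner route'' in Step~1, it is essentially the paper's proof: the paper also cites the equality of the Newtonian relative capacity with the compactly supported Lipschitz capacity for compact sets (via \cite[Remark~3.4]{Costea2009}), and then applies the same rescaling and truncation $v=\min\{1,h/\theta\}$ with $\theta\uparrow1$. You can drop the first $v_0+\eta$ route: it is not needed, and as written it does not justify how a Lipschitz $\eta'$ that is $\ge1$ on the exceptional set is obtained.
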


\begin{proof}
For compact subsets of an open set in a complete doubling PI space, the Newtonian relative capacity agrees with the relative capacity obtained by using compactly supported Lipschitz test functions; see \cite[Remark~3.4]{Costea2009}. Hence, after replacing $\varepsilon$ by a smaller number, there is $h\in\LipLip(\Omega)$ such that $h\ge1$ on $K$ and
\[
 \int_\Omega g_h^2\,d\omega\le\capw(K;\Omega)+\varepsilon/4.
\]
After truncation we may assume $0\le h\le1$, so $h=1$ on $K$. Fix $\theta\in(0,1)$ sufficiently close to $1$ and set
\[
 v:=\min\{1,h/\theta\}.
\]
Since $h$ is continuous and equals $1$ on the compact set $K$, the set $\{h>\theta\}$ is an open neighbourhood of $K$, and $v=1$ there. The upper-gradient chain rule gives $g_v\le\theta^{-1}g_h$ almost everywhere. Choosing $\theta$ sufficiently close to $1$ yields
\[
 \int_\Omega g_v^2\,d\omega
 \le\theta^{-2}\bigl(\capw(K;\Omega)+\varepsilon/4\bigr)
 \le\capw(K;\Omega)+\varepsilon.
\]
as required.
\end{proof}

\subsection{A capacitary trace theorem}

We next record the trace principle that converts capacity domination into an $L^2(d\pi)$ estimate.

We first record the strong capacitary estimate used in the trace argument.

\begin{lemma}
\label{lem:strong-capacitary}
Let $\Omega\subset X$ be open and $v\in N_0^{1,2}(\Omega)$. Then
\begin{equation}
 \sum_{k\in\mathbb Z}2^{2k}
 \capw\bigl(\{x\in\Omega:|\widetilde v(x)|>2^k\};\Omega\bigr)
 \le C\int_\Omega g_v^2\,d\omega.
 \label{eq:strong-capacitary}
\end{equation}
\end{lemma}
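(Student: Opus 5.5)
This is the strong capacitary (Maz'ya) inequality, and I would prove it by the standard truncation/level-slicing argument. First reduce to $v\ge0$: since $|v|\in N_0^{1,2}(\Omega)$ with $g_{|v|}\le g_v$ a.e., and $|\widetilde v|$ is a quasicontinuous representative of $|v|$, it suffices to treat $v\ge 0$ with $\widetilde v$ quasicontinuous. For each $k\in\mathbb Z$ define the slice
\[
 v_k:=\min\bigl\{1,\max\{0,\,v/2^{k-1}-1\}\bigr\}
 =2^{-(k-1)}\min\bigl\{2^{k-1},\,(v-2^{k-1})_+\bigr\}.
\]
Then $v_k\in N_0^{1,2}(\Omega)$, because it is a Lipschitz function of $v$ vanishing at $0$. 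Its quasicontinuous representative is the same expression in $\widetilde v$, and it equals $1$ on $\{\widetilde v\ge 2^k\}\supset\{\widetilde v>2^k\}$. Hence $v_k$ is admissible for $\capw(\{\widetilde v>2^k\};\Omega)$.

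By the chain rule for minimal weak upper gradients, $g_{v_k}\le 2^{-(k-1)}g_v\,\one_{E_k}$ a.e., where $E_k:=\{2^{k-1}<v\le 2^k\}$. In fact the gradient vanishes a.e. on level sets and outside the slab, by locality of minimal weak upper gradients. Therefore
\[
 2^{2k}\capw(\{\widetilde v>2^k\};\Omega)\le 2^{2k}\int_\Omega g_{v_k}^2\,d\omega\le 4\int_{E_k}g_v^2\,d\omega .
\]

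The sets $E_k$ are pairwise disjoint. Summing over $k\in\mathbb Z$ therefore gives the claim with $C=4$, since $\sum_k\int_{E_k}g_v^2\le\int_\Omega g_v^2$.

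The only delicate point is the representative issue. We need quasicontinuity to pass through the truncations, so that the representative of $v_k$ is $\ge1$ quasieverywhere on the superlevel set of $\widetilde v$. We also need the locality statement $g_{v_k}=0$ a.e. off $E_k$ for Newtonian functions. Both are standard in the metric theory, citing \cite{BjornBjornBook}.
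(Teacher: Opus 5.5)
Your proposal is correct and is essentially the paper's proof: the same dyadic truncations of $|v|$, the chain rule confining $g_{v_k}$ to a single dyadic slab, and summation over the pairwise disjoint slabs. The only difference is bookkeeping. You index the slab as $(2^{k-1},2^k]$, so $v_k$ is admissible for $\{|\widetilde v|>2^k\}$ directly; this avoids the paper's final index shift and makes the constant $C=4$ explicit.
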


\begin{proof}
For $k\in\mathbb Z$, define
\[
 v_k(x):=\min\left\{1,2^{-k}(|v(x)|-2^k)_+\right\}.
\]
Then $v_k\in N_0^{1,2}(\Omega)$ and $v_k=1$ quasieverywhere on $\{|\widetilde v|>2^{k+1}\}$. The upper-gradient chain rule gives
\[
 g_{v_k}\le 2^{-k}g_v\one_{\{2^k<|v|<2^{k+1}\}}
 \quad\text{a.e.}
\]
Hence
\[
 2^{2k}\capw(\{|\widetilde v|>2^{k+1}\};\Omega)
 \le\int_{\{2^k<|v|<2^{k+1}\}}g_v^2\,d\omega.
\]
Summing over $k$ proves \eqref{eq:strong-capacitary}, after an index shift.
\end{proof}

The next theorem converts compact-set capacity domination into a trace inequality and also guarantees that quasicontinuous representatives are determined by the target measure.

\begin{theorem}
\label{thm:trace-capacity}
Let $\Omega\subset X$ be open and let $\pi$ be a positive Radon measure on $\Omega$. Assume that
\begin{equation}
 \pi(K)\le A\,\capw(K;\Omega)
 \label{eq:capacity-domination-general}
\end{equation}
for every compact $K\subset\Omega$. Then $\pi$ vanishes on sets of relative capacity zero and
\begin{equation}
 \int_\Omega|\widetilde v|^2\,d\pi
 \le C A\int_\Omega g_v^2\,d\omega
 \label{eq:trace-capacity}
\end{equation}
for every $v\in N_0^{1,2}(\Omega)$.
\end{theorem}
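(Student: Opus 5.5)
The plan follows the classical Maz'ya route: first extend the domination \eqref{eq:capacity-domination-general} from compact sets to superlevel sets of quasicontinuous functions, then sum over dyadic levels using \cref{lem:strong-capacitary}.

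\emph{Step 1: null sets.} Let $E\subset\Omega$ have $\capw(E;\Omega)=0$. Fix $\varepsilon>0$ and use outer regularity of capacity (available from the Choquet theory cited before \cref{lem:lipschitz-capacity-approx}) to pick an open $U\supset E$ with $\capw(U;\Omega)<\varepsilon$. Inner regularity of the Radon measure $\pi$ on open sets gives
\[
 \pi(U)=\sup_{K\subset U}\pi(K)\le A\sup_{K\subset U}\capw(K;\Omega)\le A\,\capw(U;\Omega)< A\varepsilon ,
\]
where the middle inequality uses monotonicity of capacity. Hence $\pi^*(E)=0$. In particular $|\widetilde v|$ is determined $\pi$-a.e. independently of the chosen representative, since two quasicontinuous representatives agree q.e.

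\emph{Step 2: superlevel sets.} For $v\in N_0^{1,2}(\Omega)$ and $t>0$, set $E_t=\{x\in\Omega:|\widetilde v(x)|>t\}$. Quasicontinuity gives, for each $\varepsilon>0$, an open $G$ with $\capw(G;\Omega)<\varepsilon$ such that $\widetilde v|_{\Omega\setminus G}$ is continuous. Then $E_t\cup G$ is open, and
\[
 \pi(E_t)\le\pi(E_t\cup G)=\sup_K\pi(K)\le A\,\capw(E_t\cup G;\Omega)\le A\bigl(\capw(E_t;\Omega)+\varepsilon\bigr)
\]
by subadditivity. Letting $\varepsilon\to0$ yields $\pi(E_t)\le A\,\capw(E_t;\Omega)$. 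Alternatively, $E_t$ is quasiopen, and one can use that capacity of quasiopen sets is approximated from inside by compact sets up to $\varepsilon$. The open-set route above is cleaner and I will use it.

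\emph{Step 3: summation.} By the layer-cake decomposition,
\[
 \int_\Omega|\widetilde v|^2\,d\pi\le\sum_{k\in\mathbb Z}2^{2(k+1)}\pi(E_{2^k}).
\]
Applying Step 2 bounds the right-hand side by $4A\sum_k2^{2k}\capw(E_{2^k};\Omega)$, and \cref{lem:strong-capacitary} bounds this by $CA\int_\Omega g_v^2\,d\omega$, which is \eqref{eq:trace-capacity}.

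\emph{Main obstacle.} The delicate part is Steps 1--2: passing from compact sets to arbitrary quasiopen superlevel sets. This requires outer capacity and quasicontinuity relative to the fixed open set $\Omega$, including the fact that $\capw(\cdot;\Omega)$ is countably subadditive and outer there. These facts are where I would rely on \cite{BjornBjornBook,BjornBjornCapacity}. One must also check that $\pi$-measurability of $E_t$ holds up to a $\pi$-null set, which is exactly what Step 1 provides.
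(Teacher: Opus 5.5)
Your proposal is correct and follows the paper's proof: extend the domination \eqref{eq:capacity-domination-general} from compact sets to the superlevel sets $\{|\widetilde v|>2^k\}$, apply the dyadic layer-cake bound, and conclude with \cref{lem:strong-capacitary}. The only difference is the extension step. The paper simply invokes inner regularity of the Radon measure $\pi$ on Borel sets together with monotonicity of capacity. Your argument through outer regularity of capacity and quasicontinuity is a more careful version of the same step, since it also covers superlevel sets and capacity-null sets that need not be Borel, a point the paper passes over.
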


\begin{proof}
Since $\pi$ is Radon and relative capacity is monotone, \eqref{eq:capacity-domination-general} extends from compact sets to Borel sets by inner regularity. In particular, $\pi$ does not charge capacity-zero sets, and the integral of $\widetilde v$ is representative-independent.

Let $E_k=\{|\widetilde v|>2^k\}$. By the dyadic layer-cake estimate,
\[
 \int_\Omega|\widetilde v|^2\,d\pi
 \le C\sum_{k\in\mathbb Z}2^{2k}\pi(E_k).
\]
Applying \eqref{eq:capacity-domination-general} and \cref{lem:strong-capacitary} proves \eqref{eq:trace-capacity}.
\end{proof}

\subsection{Mean-zero traces and a general mixed estimate}

Fix first a structural outer dilation
\begin{equation}
 \Lambda_0>8\lambda_2+8,
 \label{eq:Lambda0-choice}
\end{equation}
and then choose
\begin{equation}
 1<\tau<\Lambda_0,
 \qquad \Lambda\ge\max\{\Lambda_0,\lambda_2\tau\}.
 \label{eq:dilation-package}
\end{equation}
The precise values are irrelevant, provided they are fixed in this order.

The capacity condition on a ball gives the following mean-zero trace estimate.

\begin{proposition}
\label{prop:mean-zero-trace}
Let $B=B(x,r)$ and suppose
\begin{equation}
 \pi(K)\le A_B\,\capw(K;\Lambda_0B)
 \label{eq:local-cap-domination}
\end{equation}
for every compact $K\subset B$. Then
\begin{equation*}
 \int_B|\widetilde u-u_{B,\omega}|^2\,d\pi
 \le C A_B\int_{\Lambda B}g_u^2\,d\omega
\end{equation*}
for every $u\in N^{1,2}_{\loc}(X)$.
\end{proposition}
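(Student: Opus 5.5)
The plan is to reduce the mean-zero estimate to the zero-boundary trace inequality of \cref{thm:trace-capacity} on $\Omega=\Lambda_0B$. We do this by cutting off $u-u_{B,\omega}$, and we then control the resulting gradient energy with the quadratic Poincar\'e inequality \cref{lem:quadratic-pi} on the dilate $\tau B$.

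First we transfer the capacity domination from compact subsets of $B$ to a measure on $\Lambda_0B$. Set $\pi_B:=\pi\lfloor B$, a positive Radon measure on $\Lambda_0B$. For compact $K\subset\Lambda_0B$, inner regularity of $\pi$ gives
\[
\pi_B(K)=\sup\{\pi(K') : K'\subset K\cap B \text{ compact}\}.
\]
Each such $K'$ satisfies $\pi(K')\le A_B\capw(K';\Lambda_0B)\le A_B\capw(K;\Lambda_0B)$ by \eqref{eq:local-cap-domination} and monotonicity. Hence $\pi_B$ satisfies \eqref{eq:capacity-domination-general} on $\Omega=\Lambda_0B$ with constant $A_B$. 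By \cref{thm:trace-capacity}, $\pi_B$ does not charge sets of zero relative capacity, so $\widetilde u$ is $\pi$-a.e.\ well defined on $B$. The representative issue for $u\in N^{1,2}_{\loc}(X)$ is handled after localization, because a quasicontinuous representative of $\eta u$ equals $\eta\widetilde u$ quasieverywhere.

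Next we localize. Fix a Lipschitz cutoff $\eta$ with $0\le\eta\le1$, $\eta=1$ on $B$, $\supp\eta\subset\tau B$ and $\operatorname{Lip}\eta\lesssim((\tau-1)r)^{-1}$, and put $v:=\eta(u-u_{B,\omega})$. Since $\tau<\Lambda_0$, we have $v\in N_0^{1,2}(\Lambda_0B)$. The Leibniz rule gives
\[
g_v\le g_u\one_{\tau B}+|u-u_{B,\omega}|\,|\nabla\eta|.
\]
Applying \cref{thm:trace-capacity} to $v$, and using $\widetilde v=\widetilde u-u_{B,\omega}$ on $B$, yields
\[
\int_B|\widetilde u-u_{B,\omega}|^2\,d\pi\le CA_B\Bigl(\int_{\tau B}g_u^2\,d\omega+\frac{C_\tau}{r^2}\int_{\tau B}|u-u_{B,\omega}|^2\,d\omega\Bigr).
\]

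Finally we estimate the second term. Write $u-u_{B,\omega}=(u-u_{\tau B,\omega})+(u_{\tau B,\omega}-u_{B,\omega})$. The constant difference is bounded by Jensen's inequality and doubling:
\[
|u_{\tau B,\omega}-u_{B,\omega}|^2\le\frac{C}{\omega(\tau B)}\int_{\tau B}|u-u_{\tau B,\omega}|^2\,d\omega.
\]
Therefore the whole term is at most $C\int_{\tau B}|u-u_{\tau B,\omega}|^2\,d\omega$. By \eqref{eq:quadratic-pi} this is bounded by $C\tau^2r^2\int_{\lambda_2\tau B}g_u^2\,d\omega$. The choice $\Lambda\ge\lambda_2\tau$ in \eqref{eq:dilation-package} ensures that both energy integrals lie inside $\Lambda B$, which concludes the estimate.

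The main obstacle is the first step, namely the restriction of $\pi$ to $B$ and the verification that the zero-boundary trace theorem applies with compact sets not contained in $B$. One must also check that $\widetilde v$ agrees with $\widetilde u-u_{B,\omega}$ $\pi$-a.e.\ on $B$, which uses that $\pi_B$ does not charge capacity-zero sets. Everything else is a routine Poincar\'e and cutoff computation.
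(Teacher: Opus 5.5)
Your proposal is correct and follows the paper's proof essentially step for step. You restrict $\pi$ to $B$ and use inner regularity to obtain capacity domination for all compact subsets of $\Lambda_0B$, apply \cref{thm:trace-capacity} to the cutoff $v=\eta(u-u_{B,\omega})\in N_0^{1,2}(\Lambda_0B)$, and identify $\widetilde v$ with $\widetilde u-u_{B,\omega}$ $\pi_B$-a.e.\ on $B$. The cutoff term is then absorbed by the quadratic Poincar\'e inequality on $\tau B$, and your explicit Jensen bound for $|u_{\tau B,\omega}-u_{B,\omega}|$ just spells out the paper's ``doubling to compare $u_{B,\omega}$ and $u_{\tau B,\omega}$''.
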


\begin{proof}
Choose a Lipschitz cutoff $\eta$ with $\eta=1$ on $B$, $\supp\eta\subset\tau B$, and $g_\eta\le C/r$. Set
\[
 v:=\eta(u-u_{B,\omega}).
\]
Then $v\in N_0^{1,2}(\Lambda_0B)$ and
\[
 g_v\le \eta g_u+|u-u_{B,\omega}|g_\eta.
\]
Let $\pi_B:=\pi\lfloor_B$. The restricted measure $\pi_B$ is Radon on $\Lambda_0B$. For every compact $F\subset\Lambda_0B$, inner regularity and \eqref{eq:local-cap-domination} give
\[
 \begin{aligned}
 \pi_B(F)
 &=\pi(F\cap B)
 =\sup_{\substack{K\subset F\cap B\\K\text{ compact}}}\pi(K)\\
 &\le A_B\sup_{\substack{K\subset F\cap B\\K\text{ compact}}}
 \capw(K;\Lambda_0B)
 \le A_B\capw(F;\Lambda_0B).
 \end{aligned}
\]
Thus \cref{thm:trace-capacity} applies to $\pi_B$. Moreover, the quasicontinuous representative of $v$ equals $\eta(\widetilde u-u_{B,\omega})$ quasieverywhere. Since $\pi_B$ does not charge sets of relative capacity zero, this identity holds $\pi_B$-almost everywhere. Since $v=u-u_{B,\omega}$ on $B$,
\begin{align*}
 \int_B|\widetilde u-u_{B,\omega}|^2\,d\pi
 &=\int_{\Lambda_0B}|\widetilde v|^2\,d\pi_B\\
 &\le CA_B\int_{\tau B}g_u^2\,d\omega
 +\frac{CA_B}{r^2}\int_{\tau B}|u-u_{B,\omega}|^2\,d\omega.
\end{align*}
The quadratic Poincar\'e inequality on $\tau B$, together with doubling to compare $u_{B,\omega}$ and $u_{\tau B,\omega}$, controls the final term by $CA_B\int_{\Lambda B}g_u^2\,d\omega$.
\end{proof}

Combining the trace estimate with the ambient Poincar\'e inequality gives the required local mixed estimate.

\begin{theorem}
\label{thm:general-mixed-estimate}
Under the hypotheses of \cref{prop:mean-zero-trace},
\begin{equation*}
 \begin{aligned}
 &\int_B\int_B|u(x)-\widetilde u(y)|^2\,d\omega(x)\,d\pi(y)\\
 &\qquad\le C\bigl[r^2\pi(B)+A_B\omega(B)\bigr]
 \int_{\Lambda B}g_u^2\,d\omega.
 \end{aligned}
\end{equation*}
In particular, if
\begin{equation}
 A_B\omega(B)\le C_0r^2\pi(B),
 \label{eq:normalized-cap-coefficient}
\end{equation}
then
\begin{equation*}
 \int_B\int_B|u(x)-\widetilde u(y)|^2\,d\omega(x)\,d\pi(y)
 \le Cr^2\pi(B)\int_{\Lambda B}g_u^2\,d\omega.
\end{equation*}
\end{theorem}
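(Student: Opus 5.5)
The proof inserts the constant $u_{B,\omega}$ between $u(x)$ and $\widetilde u(y)$ and treats the two resulting pieces separately. For $\omega$-a.e.\ $x\in B$ and every $y\in B$ we have
\[
 |u(x)-\widetilde u(y)|^2\le 2|u(x)-u_{B,\omega}|^2+2|\widetilde u(y)-u_{B,\omega}|^2 .
\]
We integrate this over $B\times B$ with respect to $d\omega(x)\,d\pi(y)$. By Tonelli, which applies because everything is nonnegative, the double integral splits into two products:
\[
 2\pi(B)\int_B|u-u_{B,\omega}|^2\,d\omega
 \;+\;2\omega(B)\int_B|\widetilde u-u_{B,\omega}|^2\,d\pi .
\]

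The first term is handled by the quadratic Poincar\'e inequality \eqref{eq:quadratic-pi} of \cref{lem:quadratic-pi}. It is bounded by $C\pi(B)r^2\int_{\lambda_2B}g_u^2\,d\omega$. Since $\Lambda\ge\lambda_2\tau\ge\lambda_2$ by \eqref{eq:dilation-package}, this is at most $Cr^2\pi(B)\int_{\Lambda B}g_u^2\,d\omega$.

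The second term is exactly what \cref{prop:mean-zero-trace} controls, so it is at most $CA_B\,\omega(B)\int_{\Lambda B}g_u^2\,d\omega$. Adding the two bounds gives the first inequality. Inserting \eqref{eq:normalized-cap-coefficient} replaces $A_B\omega(B)$ by $C_0r^2\pi(B)$, which gives the normalized form.

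The only point needing care is measurability and interpretation of the integrand $|u(x)-\widetilde u(y)|^2$ on the product space. Choose $u$ to be its quasicontinuous (Borel) representative, so that $u(x)$ agrees with $\widetilde u(x)$ for $\omega$-a.e.\ $x$. Then the integrand is Borel on $B\times B$. The $\pi$-integral is representative-independent because, by the proof of \cref{prop:mean-zero-trace} via \cref{thm:trace-capacity}, $\pi\lfloor_B$ does not charge sets of zero capacity. If $\int_{\Lambda B}g_u^2\,d\omega=\infty$ there is nothing to prove, so everything else is a direct assembly of the earlier results and I expect no real obstacle.
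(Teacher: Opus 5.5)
Your proposal is correct and matches the paper's proof: you insert $u_{B,\omega}$, integrate the elementary inequality to obtain $2\pi(B)\int_B|u-u_{B,\omega}|^2\,d\omega+2\omega(B)\int_B|\widetilde u-u_{B,\omega}|^2\,d\pi$, and bound the two terms by \cref{lem:quadratic-pi} and \cref{prop:mean-zero-trace}. The check that $\lambda_2B\subset\Lambda B$ via \eqref{eq:dilation-package} and the measurability and representative-independence remarks are details the paper leaves implicit.
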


\begin{proof}
The pointwise estimate
\[
 |u(x)-\widetilde u(y)|^2
 \le2|u(x)-u_{B,\omega}|^2+2|\widetilde u(y)-u_{B,\omega}|^2
\]
gives
\begin{align*}
 \mathcal O_{\omega,\pi}(u;B)
 &\le2\pi(B)\int_B|u-u_{B,\omega}|^2\,d\omega
 +2\omega(B)\int_B|\widetilde u-u_{B,\omega}|^2\,d\pi.
\end{align*}
Use \cref{lem:quadratic-pi,prop:mean-zero-trace}. The normalized conclusion follows from \eqref{eq:normalized-cap-coefficient}.
\end{proof}
 %
\section{Codimensional content and variational capacity}
\label{sec:content-capacity}

Throughout this section, $(X,d,\omega)$ is an unbounded complete $2$-PI space. Recall the fixed structural dilation $\Lambda_0$ from \eqref{eq:Lambda0-choice}, and, for a ball $B_0=B(x_0,R)$, write
\begin{equation*}
 \Omega_0:=\Lambda_0B_0.
\end{equation*}

\subsection{Restricted codimensional content}

We now define the normalized codimensional content used below.

\begin{definition}
\label{def:content}
Let $\delta>0$ and $K\subset B_0$. Define
\begin{equation}
 \cH_{\delta,\omega}(K;B_0)
 :=\inf_{\mathcal B}
 \sum_i\left(\frac{r_i}{R}\right)^\delta
 \frac{\omega(B_i)}{r_i^2},
 \label{eq:content-definition}
\end{equation}
where the infimum is taken over countable families $\mathcal B=\{B_i=B(x_i,r_i)\}$ such that
\begin{equation*}
 K\subset\bigcup_iB_i,
 \qquad x_i\in B_0,
 \qquad 0<r_i\le R.
\end{equation*}
\end{definition}

Changing the upper radius bound by a fixed factor, allowing centers in a fixed dilation of $B_0$, or replacing the covering balls by a fixed dilation changes \eqref{eq:content-definition} only by a structural constant. This follows from recentering and doubling.

In the Ahlfors-regular case, this content has the following familiar interpretation.

\begin{remark}
If $\omega(B(x,r))\simeq r^Q$ and $Q-2+\delta>0$, then
\[
 \cH_{\delta,\omega}(K;B_0)
 \simeq R^{-\delta}\mathcal H^{Q-2+\delta}_\infty(K),
\]
where $\mathcal H^s_\infty$ is the $s$-dimensional Hausdorff content. The gain $\delta>0$ moves the content above the capacity-critical exponent $Q-2$.
\end{remark}

\subsection{The content--capacity estimate}

\begin{theorem}[Content--capacity embedding]
\label{thm:content-capacity}
For every $\delta>0$, there is $C_\delta>0$, depending only on $\delta$, $\Lambda_0$, and the $2$-PI data, such that
\begin{equation}
 \cH_{\delta,\omega}(K;B_0)
 \le C_\delta\capw(K;\Omega_0)
 \label{eq:content-capacity-main}
\end{equation}
for every ball $B_0=B(x_0,R)$ and every compact $K\subset B_0$.
\end{theorem}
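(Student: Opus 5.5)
The plan is a boxing-type argument: a telescoping Poincar\'e chain at every point of $K$, a selection of one "good" scale per point, and a Vitali covering. Fix $\varepsilon>0$. By \cref{lem:lipschitz-capacity-approx} choose $v\in\LipLip(\Omega_0)$ with $0\le v\le1$, $v=1$ on an open neighbourhood of $K$, and
\[
 \int_X g_v^2\,d\omega\le\capw(K;\Omega_0)+\varepsilon,
\]
where $v$ and $g_v$ are extended by zero outside $\Omega_0$. It then suffices to produce a cover of $K$ admissible for \eqref{eq:content-definition}, up to the structural modifications allowed after \cref{def:content}, whose weighted sum is at most $C_\delta\int_X g_v^2\,d\omega$.

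\emph{Step 1: a large ball with small average.} Fix $x\in K$ and a structural $M\ge 2(\Lambda_0+1)$. Since $\supp v\subset\Lambda_0B_0\subset B(x,(\Lambda_0+1)R)$ and $0\le v\le 1$, \cref{lem:reverse-doubling} gives
\[
 v_{B(x,MR),\omega}\le\frac{\omega(B(x,(\Lambda_0+1)R))}{\omega(B(x,MR))}\le C_{\mathrm{RD}}\Bigl(\frac{\Lambda_0+1}{M}\Bigr)^\sigma\le\frac12 ,
\]
once $M$ is chosen large depending only on the data. Set $r_k:=2^{-k}MR$ and $B_k:=B(x,r_k)$ for $k\ge0$. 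Since $v=1$ near $x$, $v_{B_k,\omega}\to1$. Telescoping, and bounding consecutive averages by doubling and \cref{lem:quadratic-pi}, gives
\[
 \frac12\le\sum_{k\ge0}|v_{B_k,\omega}-v_{B_{k+1},\omega}|\le C\sum_{k\ge0}r_k\Bigl(\avgint_{\lambda_2B_k}g_v^2\,d\omega\Bigr)^{1/2}.
\]

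\emph{Step 2: selecting a good scale.} Let $c_\delta>0$ be small enough that
\[
 C c_\delta\sum_{k\ge0}2^{-k\delta/2}<\frac12 .
\]
Then there must be some $k=k(x)$ with
\[
 r_k\Bigl(\avgint_{\lambda_2B_k}g_v^2\,d\omega\Bigr)^{1/2}\ge c_\delta\Bigl(\frac{r_k}{MR}\Bigr)^{\delta/2}.
\]
Squaring and using doubling to replace $\omega(\lambda_2B_k)$ by $\omega(B_k)$, this becomes
\[
 \Bigl(\frac{r_k}{R}\Bigr)^\delta\frac{\omega(B_k)}{r_k^2}\le C_\delta\int_{\lambda_2B_k}g_v^2\,d\omega .
\]
The radii $r_{k(x)}$ are at most $MR$ and the centres lie in $K\subset B_0$. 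By the remark after \cref{def:content}, enlarging the radius bound to a fixed multiple of $R$ only costs a structural constant, so scales above $R$ cause no trouble.

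\emph{Step 3: covering.} Apply the $5r$-covering lemma to the family $\{\lambda_2B_{k(x)}(x)\}_{x\in K}$, whose radii are uniformly bounded. This yields a pairwise disjoint subfamily $\{\lambda_2B^{(j)}\}$ such that $\{5\lambda_2B^{(j)}\}$ covers $K$. Doubling lets us replace $\omega(5\lambda_2B^{(j)})$ by $\omega(B^{(j)})$ and the radius $5\lambda_2r^{(j)}$ by $r^{(j)}$ in the weights, again at a structural cost. Using disjointness we obtain
\[
 \cH_{\delta,\omega}(K;B_0)\le C\sum_j\Bigl(\frac{r^{(j)}}{R}\Bigr)^\delta\frac{\omega(B^{(j)})}{(r^{(j)})^2}\le C_\delta\sum_j\int_{\lambda_2B^{(j)}}g_v^2\,d\omega\le C_\delta\bigl(\capw(K;\Omega_0)+\varepsilon\bigr).
\]
Note that $g_v=0$ outside $\Omega_0$, so balls leaving $\Omega_0$ are harmless. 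Letting $\varepsilon\to0$ gives \eqref{eq:content-capacity-main}.

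The main obstacle is Step 1. One needs a single starting ball whose radius is comparable to $R$, uniformly in $x\in K$, on which the capacity test function already has average at most $1/2$. This is exactly where unboundedness enters through reverse doubling, and where the fixed outer domain $\Omega_0$ matters. The remaining steps are bookkeeping with $\delta>0$ providing the summable weights.
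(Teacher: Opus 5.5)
Your proposal is correct and follows the paper's proof: the same Lipschitz near-minimizer, a reverse-doubling start ball of radius comparable to $R$, the telescoping Poincar\'e chain, selection of a scale with $(r_k/R)^{\delta/2}$ weights, and the $5r$-covering lemma with disjointness (using $g_v=0$ off $\Omega_0$). The only difference is that you absorb the paper's explicit coarse/fine case split into the remark after the definition of content. That remark is valid by the same argument as the paper's Case~1: any ball of radius $\gtrsim R$ centred in $B_0$ can be replaced by $B_0$ itself at structural cost, so this is a legitimate repackaging rather than a new route.
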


\begin{proof}
Fix $\varepsilon>0$. By \cref{lem:lipschitz-capacity-approx}, choose $u\in\LipLip(\Omega_0)$ satisfying
\begin{equation}
 0\le u\le1,
 \qquad u=1\ \text{in a neighbourhood of }K,
 \label{eq:boxing-test}
\end{equation}
and
\begin{equation}
 \int_{\Omega_0}g_u^2\,d\omega
 \le\capw(K;\Omega_0)+\varepsilon.
 \label{eq:boxing-near-minimizer}
\end{equation}
Extend $u$ by zero to $X$ and let $g=g_u$, extended by zero outside $\Omega_0$.

\smallskip
\noindent\emph{Step 1: a large-scale average.}
Fix $x\in K$. Since $x\in B_0$,
\[
 \Omega_0\subset B(x,(\Lambda_0+1)R).
\]
By \cref{lem:reverse-doubling}, choose a structural number $L>2(\Lambda_0+1)$ so large that
\begin{equation}
 \frac{\omega(\Omega_0)}{\omega(B(x,LR))}\le\frac14
 \label{eq:large-average-ratio}
\end{equation}
for every $x\in B_0$. Since $0\le u\le1$ and $u=0$ outside $\Omega_0$,
\begin{equation*}
 u_{B(x,LR),\omega}\le\frac14.
\end{equation*}

\smallskip
\noindent\emph{Step 2: telescoping averages.}
Set
\begin{equation*}
 r_k:=2^{-k}LR,
 \qquad B_k:=B(x,r_k),
 \qquad k\ge0.
\end{equation*}
Continuity of $u$ and \eqref{eq:boxing-test} imply
\[
 \lim_{k\to\infty}u_{B_k,\omega}=u(x)=1.
\]
Therefore
\begin{equation*}
 \frac34
 \le\sum_{k=0}^\infty
 |u_{B_{k+1},\omega}-u_{B_k,\omega}|.
\end{equation*}
Since $B_{k+1}\subset B_k$ and $\omega(B_k)\lesssim\omega(B_{k+1})$, Cauchy--Schwarz and \cref{lem:quadratic-pi} give
\begin{equation}
 |u_{B_{k+1},\omega}-u_{B_k,\omega}|
 \le Cr_k
 \left(\frac1{\omega(\lambda_2B_k)}
 \int_{\lambda_2B_k}g^2\,d\omega\right)^{1/2}.
 \label{eq:average-difference}
\end{equation}
Write the right-hand side as $a_k(x)$, with the structural constant chosen so that
\begin{equation*}
 \frac34\le\sum_{k=0}^\infty a_k(x).
\end{equation*}

\smallskip
\noindent\emph{Step 3: coarse and fine alternatives.}
Let $k_0$ be the smallest integer such that
\begin{equation*}
 r_{k_0}\le\frac{R}{20\lambda_2}.
\end{equation*}
The number of indices $k<k_0$ is bounded structurally.

\smallskip
\noindent\emph{Case 1: the coarse alternative occurs.}
Suppose that
\begin{equation*}
 \sum_{k<k_0}a_k(x)\ge\frac38
\end{equation*}
for at least one $x\in K$. Then $a_k(x)\ge c>0$ for some $k<k_0$, and hence
\begin{equation}
 \int_{\lambda_2B_k}g^2\,d\omega
 \ge c\frac{\omega(\lambda_2B_k)}{r_k^2}.
 \label{eq:coarse-energy}
\end{equation}
For $k<k_0$, one has $r_k>R/(20\lambda_2)$ and $r_k\le LR$. Since $x\in B_0$,
\[
 B_0\subset B(x,2R)\subset B(x,40\lambda_2r_k).
\]
Repeated doubling therefore gives
\[
 \omega(\lambda_2B_k)\gtrsim\omega(B_0).
\]
Since $g=0$ outside $\Omega_0$, \eqref{eq:coarse-energy} yields
\begin{equation*}
 \int_{\Omega_0}g^2\,d\omega
 \gtrsim\frac{\omega(B_0)}{R^2}.
\end{equation*}
The single ball $B_0$ is an admissible cover of $K$, so
\begin{equation*}
 \cH_{\delta,\omega}(K;B_0)
 \le\frac{\omega(B_0)}{R^2}
 \lesssim\int_{\Omega_0}g^2\,d\omega.
\end{equation*}
Combining this with \eqref{eq:boxing-near-minimizer} and then letting $\varepsilon\downarrow0$ proves \eqref{eq:content-capacity-main} in Case~1.

\smallskip
\noindent\emph{Case 2: the fine alternative occurs at every point.}
For the remainder of the proof, assume that the coarse alternative does not occur for any $x\in K$. Then
\begin{equation*}
 \sum_{k\ge k_0}a_k(x)\ge\frac38
\end{equation*}
for every $x\in K$. Since
\begin{equation*}
 \sum_{k\ge k_0}\left(\frac{r_k}{R}\right)^{\delta/2}<\infty,
\end{equation*}
there is an index $k=k(x)\ge k_0$ such that
\begin{equation}
 a_k(x)
 \ge c_\delta\left(\frac{r_k}{R}\right)^{\delta/2}.
 \label{eq:selected-scale}
\end{equation}
Let $r_x=r_{k(x)}$ and $B_x=B(x,r_x)$. By \eqref{eq:average-difference} and \eqref{eq:selected-scale},
\begin{equation}
 \int_{\lambda_2B_x}g^2\,d\omega
 \ge c_\delta
 \left(\frac{r_x}{R}\right)^\delta
 \frac{\omega(B_x)}{r_x^2}.
 \label{eq:fine-energy-density}
\end{equation}
Moreover $r_x\le R/(20\lambda_2)$, and hence
\begin{equation*}
 \lambda_2B_x\subset B\left(x_0,R+\lambda_2r_x\right)
 \subset \frac{21}{20}B_0\subset\Omega_0.
\end{equation*}

Apply the $5$-covering lemma to the family $\{\lambda_2B_x:x\in K\}$. There is a pairwise disjoint countable subfamily $\{\lambda_2B_i\}$ such that
\begin{equation*}
 K\subset\bigcup_i5\lambda_2B_i.
\end{equation*}
The balls $5\lambda_2B_i$ have centers in $B_0$ and radii at most $R/4$, so they form an admissible content cover. By doubling and \eqref{eq:fine-energy-density},
\begin{align*}
 \cH_{\delta,\omega}(K;B_0)
 &\le\sum_i
 \left(\frac{5\lambda_2r_i}{R}\right)^\delta
 \frac{\omega(5\lambda_2B_i)}{(5\lambda_2r_i)^2}\\
 &\le C_\delta\sum_i\int_{\lambda_2B_i}g^2\,d\omega
 \le C_\delta\int_{\Omega_0}g^2\,d\omega,
\end{align*}
where the last inequality uses pairwise disjointness and the fact that $\lambda_2B_i\subset\Omega_0$.

Combining the two alternatives and using \eqref{eq:boxing-near-minimizer},
\[
 \cH_{\delta,\omega}(K;B_0)
 \le C_\delta\bigl(\capw(K;\Omega_0)+\varepsilon\bigr).
\]
Letting $\varepsilon\downarrow0$ proves \eqref{eq:content-capacity-main}.
\end{proof}

\begin{remark}
Only a positive reverse-doubling exponent is used to choose the large scale in \eqref{eq:large-average-ratio}. No condition such as $\sigma>2$ is required for \cref{thm:content-capacity}. Stronger lower volume exponents enter only in particular critical-radius constructions.
\end{remark}

\subsection{Measure growth implies capacity domination}

The relevant one-sided local growth condition is the following.

\begin{definition}
Let $B_0=B(x_0,R)$ and $\delta>0$. A positive Radon measure $\pi$ satisfies the $\delta$-subcritical growth condition on $B_0$ if
\begin{equation}
 \pi(B(y,r))
 \le C_{\mathrm{gr}}
 \left(\frac rR\right)^\delta
 \frac{\omega(B(y,r))}{r^2}
 \label{eq:subcritical-growth}
\end{equation}
for every $y\in B_0$ and $0<r\le R$.
\end{definition}

We next compare this hypothesis with existing two-measure Poincar\'e criteria.

\begin{remark}
If the target measure $\pi$ additionally satisfies the local doubling and dimension hypotheses in \cite[Theorem~4.1]{BjornKalamajska2022}, then the upper growth condition \eqref{eq:subcritical-growth} also verifies the balance quantity in that theorem for some exponent $q>2$; the computation is displayed in \eqref{eq:intro-prior-art-exponent}--\eqref{eq:intro-prior-art-q}. This recovers the ensuing local trace estimate by a maximal-function and truncation argument. Theorem~\ref{thm:growth-capacity} has a different and stronger output: it assumes no doubling or lower-dimension condition on $\pi$, controls every compact subset by capacity relative to one fixed outer domain, and consequently gives absolute continuity of $\pi$ with respect to variational capacity. These are the features used later in the form theory.
\end{remark}

\begin{theorem}[Growth implies capacity domination]
\label{thm:growth-capacity}
If \eqref{eq:subcritical-growth} holds, then
\begin{equation*}
 \pi(K)
 \le C\capw(K;\Lambda_0B_0)
\end{equation*}
for every compact $K\subset B_0$. The constant depends only on $C_{\mathrm{gr}}$, $\delta$, and the structural data.
\end{theorem}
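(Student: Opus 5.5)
The plan is to chain the one-sided growth bound through the content of \cref{def:content} and then apply \cref{thm:content-capacity}. First I would show that $\pi(K)\le C_{\mathrm{gr}}\,\cH_{\delta,\omega}(K;B_0)$ for every compact $K\subset B_0$. Then \eqref{eq:content-capacity-main} with $\Omega_0=\Lambda_0B_0$ gives
\[
 \pi(K)\le C_{\mathrm{gr}}C_\delta\capw(K;\Lambda_0B_0).
\]
The constant then depends only on $C_{\mathrm{gr}}$, $\delta$, $\Lambda_0$ and the $2$-PI data, as claimed.

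For the first inequality, fix an admissible family $\{B_i=B(x_i,r_i)\}$ covering $K$, with $x_i\in B_0$ and $0<r_i\le R$. Countable subadditivity of $\pi$ gives $\pi(K)\le\sum_i\pi(B_i)$. Since each center lies in $B_0$ and each radius is at most $R$, the growth condition \eqref{eq:subcritical-growth} applies to every $B_i$ with $y=x_i$, $r=r_i$. This gives
\[
 \pi(K)\le C_{\mathrm{gr}}\sum_i\left(\frac{r_i}{R}\right)^\delta\frac{\omega(B_i)}{r_i^2}.
\]
Taking the infimum over admissible covers yields $\pi(K)\le C_{\mathrm{gr}}\cH_{\delta,\omega}(K;B_0)$.

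The only point I expect to need care is bookkeeping: the admissibility conditions in \cref{def:content} must match exactly the range of \eqref{eq:subcritical-growth}. They do, since both use centers in $B_0$, radii in $(0,R]$, and the same normalization by $R$. So no recentering or doubling adjustment is required. If one preferred covers with centers in a dilate of $B_0$, the remark after \cref{def:content} shows this only changes constants. All of the substantive work is already contained in \cref{thm:content-capacity}.
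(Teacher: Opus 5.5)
Your proposal is correct and is exactly the paper's argument: on each admissible cover, countable subadditivity and the growth bound \eqref{eq:subcritical-growth} give $\pi(K)\le C_{\mathrm{gr}}\cH_{\delta,\omega}(K;B_0)$, and \cref{thm:content-capacity} then yields the claim with constant $C_{\mathrm{gr}}C_\delta$. You also checked that the admissibility conditions of \cref{def:content} coincide with the range of \eqref{eq:subcritical-growth}, which is the only bookkeeping the argument requires.
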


\begin{proof}
For every admissible covering $K\subset\bigcup_iB_i$ in \cref{def:content}, countable subadditivity and \eqref{eq:subcritical-growth} give
\[
 \pi(K)
 \le C_{\mathrm{gr}}
 \sum_i\left(\frac{r_i}{R}\right)^\delta
 \frac{\omega(B_i)}{r_i^2}.
\]
Taking the infimum and then applying \cref{thm:content-capacity} proves the claim.
\end{proof}

On a normalized ball, the preceding capacity estimate yields the local mixed inequality.

\begin{corollary}
\label{cor:growth-mixed}
Assume \eqref{eq:subcritical-growth} and
\begin{equation}
 c_0\frac{\omega(B_0)}{R^2}
 \le\pi(B_0)
 \le C_0\frac{\omega(B_0)}{R^2}.
 \label{eq:normalized-ball}
\end{equation}
Then
\begin{equation*}
 \int_{B_0}|\widetilde u-u_{B_0,\omega}|^2\,d\pi
 \le C\int_{\Lambda B_0}g_u^2\,d\omega
\end{equation*}
and
\begin{equation}
 \begin{aligned}
 &\int_{B_0}\int_{B_0}|\widetilde u(x)-\widetilde u(y)|^2\,d\omega(x)\,d\pi(y)\\
 &\qquad\le CR^2\pi(B_0)\int_{\Lambda B_0}g_u^2\,d\omega.
 \end{aligned}
 \label{eq:growth-mixed-cor}
\end{equation}
\end{corollary}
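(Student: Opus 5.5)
The plan is to chain \cref{thm:growth-capacity} into \cref{prop:mean-zero-trace} and then into \cref{thm:general-mixed-estimate}, using the normalization \eqref{eq:normalized-ball} only to fix the size of the capacitary constant.

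First, by \cref{thm:growth-capacity}, the growth condition \eqref{eq:subcritical-growth} gives
\[
 \pi(K)\le C_1\capw(K;\Lambda_0B_0)
\]
for every compact $K\subset B_0$. Here $C_1$ depends only on $C_{\mathrm{gr}}$, $\delta$ and the structural data. This is exactly hypothesis \eqref{eq:local-cap-domination} with $B=B_0$ and $A_{B_0}=C_1$. \Cref{prop:mean-zero-trace} then yields the first asserted inequality,
\[
 \int_{B_0}|\widetilde u-u_{B_0,\omega}|^2\,d\pi\le CC_1\int_{\Lambda B_0}g_u^2\,d\omega,
\]
for all $u\in N^{1,2}_{\loc}(X)$. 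No normalization is needed for this part.

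For the mixed estimate, I will verify the coefficient condition \eqref{eq:normalized-cap-coefficient}. Since $A_{B_0}=C_1$ is an absolute constant, the lower bound in \eqref{eq:normalized-ball} gives
\[
 A_{B_0}\,\omega(B_0)=C_1\,\omega(B_0)\le C_1c_0^{-1}R^2\pi(B_0).
\]
So \eqref{eq:normalized-cap-coefficient} holds with $C_0$ replaced by $C_1/c_0$. Then \cref{thm:general-mixed-estimate} gives
\[
 \int_{B_0}\int_{B_0}|u(x)-\widetilde u(y)|^2\,d\omega(x)\,d\pi(y)\le CR^2\pi(B_0)\int_{\Lambda B_0}g_u^2\,d\omega.
\]
The upper bound in \eqref{eq:normalized-ball} is not needed here; it is in any case automatic from the growth condition at $r=R$.

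The only remaining point is that \eqref{eq:growth-mixed-cor} is written with $\widetilde u(x)$ in the $\omega$-variable rather than $u(x)$. The Newtonian function $u$ and its quasicontinuous representative $\widetilde u$ agree outside a set of capacity zero. Sets of capacity zero are $\omega$-null, so $\widetilde u=u$ holds $\omega$-a.e. Hence the $d\omega(x)$ integral is unchanged, and \eqref{eq:growth-mixed-cor} follows.

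In the $y$-variable, the $\pi$-integral of $\widetilde u$ is well defined independently of the chosen representative. This holds because \cref{thm:trace-capacity}, applied to $\pi\lfloor_{B_0}$ as in the proof of \cref{prop:mean-zero-trace}, shows that $\pi\lfloor_{B_0}$ does not charge sets of relative capacity zero.

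I expect no real obstacle, since all the work sits in \cref{thm:content-capacity}. The only care needed is this representative bookkeeping, and checking that the dilations $\Lambda_0$ and $\Lambda$ are the ones fixed in \eqref{eq:Lambda0-choice}–\eqref{eq:dilation-package}, so that the outer domain in \cref{thm:growth-capacity} matches the one required in \cref{prop:mean-zero-trace}.
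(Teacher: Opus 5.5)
Your proposal is correct and follows the paper's proof exactly. Theorem~\ref{thm:growth-capacity} supplies the capacitary hypothesis of Proposition~\ref{prop:mean-zero-trace} with a dimensionless constant, and the lower bound in \eqref{eq:normalized-ball} verifies \eqref{eq:normalized-cap-coefficient}, so Theorem~\ref{thm:general-mixed-estimate} applies. Your remark that $\widetilde u=u$ holds $\omega$-a.e., which reconciles the $\widetilde u(x)$ in \eqref{eq:growth-mixed-cor} with the $u(x)$ in Theorem~\ref{thm:general-mixed-estimate}, is a detail the paper leaves implicit.
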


\begin{proof}
By \cref{thm:growth-capacity}, \eqref{eq:local-cap-domination} holds with a uniform dimensionless coefficient $A_B$. Apply \cref{prop:mean-zero-trace}. The lower bound in \eqref{eq:normalized-ball} gives $A_B\omega(B_0)\lesssim R^2\pi(B_0)$, so \cref{thm:general-mixed-estimate} yields \eqref{eq:growth-mixed-cor}.
\end{proof}

\subsection{Verification from scale decay}

Define the dimensionless quantity
\begin{equation*}
 \PhiPi(x,r):=\frac{r^2\pi(B(x,r))}{\omega(B(x,r))}.
\end{equation*}

A dimensionless scale-decay hypothesis gives a convenient verification criterion.

\begin{proposition}
Let $B_0=B(x_0,R)$. Suppose that
\begin{equation}
 \PhiPi(y,r)
 \le C_{\mathrm{sc}}
 \left(\frac rR\right)^\delta
 \PhiPi(y,R),
 \qquad y\in B_0,\quad0<r\le R,
 \label{eq:metric-scale-decay}
\end{equation}
and
\begin{equation}
 \sup_{y\in B_0}\PhiPi(y,R)\le M_0.
 \label{eq:metric-macro-bound}
\end{equation}
Then \eqref{eq:subcritical-growth} holds with $C_{\mathrm{gr}}=C_{\mathrm{sc}}M_0$.
\end{proposition}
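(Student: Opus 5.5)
The claim follows by unwinding the definition of $\PhiPi$ and chaining the two hypotheses, so no earlier machinery is needed. Fix $y\in B_0$ and $0<r\le R$. By definition,
\[
 \pi(B(y,r))=\PhiPi(y,r)\,\frac{\omega(B(y,r))}{r^2}.
\]
This rewriting is legitimate because the doubling condition \eqref{eq:doubling} gives $0<\omega(B(y,r))<\infty$, so $\PhiPi(y,r)$ is well defined and finite.

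First I apply the scale-decay hypothesis \eqref{eq:metric-scale-decay} at the same center $y$, which bounds $\PhiPi(y,r)$ by $C_{\mathrm{sc}}(r/R)^\delta\PhiPi(y,R)$. Next, since $y\in B_0$, the macroscopic bound \eqref{eq:metric-macro-bound} gives $\PhiPi(y,R)\le M_0$. Substituting both into the identity above yields
\[
 \pi(B(y,r))\le C_{\mathrm{sc}}M_0\left(\frac rR\right)^\delta\frac{\omega(B(y,r))}{r^2},
\]
which is \eqref{eq:subcritical-growth} with $C_{\mathrm{gr}}=C_{\mathrm{sc}}M_0$.

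Only two points need checking. The endpoint $r=R$ is covered, since there the decay inequality holds trivially whenever $C_{\mathrm{sc}}\ge1$, and it is in any case included in the stated range. All quantities are finite because $\pi$ is Radon and balls are relatively compact in a proper space. There is no real obstacle; the only subtlety is that \eqref{eq:metric-scale-decay} must be applied with the same center $y$ at both scales, rather than recentering at $x_0$. This is exactly how the hypothesis is stated, so no doubling comparison between $B(y,R)$ and $B_0$ is required.
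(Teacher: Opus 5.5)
Your proposal is correct and is the same argument as the paper's one-line proof: write $\pi(B(y,r))=\PhiPi(y,r)\,\omega(B(y,r))/r^2$, apply \eqref{eq:metric-scale-decay} at the center $y$, and then bound $\PhiPi(y,R)$ by $M_0$ using \eqref{eq:metric-macro-bound}. The remarks on the endpoint $r=R$ and on finiteness are harmless but not needed, since $r=R$ is already in the stated range of the hypothesis.
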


\begin{proof}
This is the definition of $\PhiPi$ rewritten using \eqref{eq:metric-scale-decay}--\eqref{eq:metric-macro-bound}.
\end{proof}

\subsection{Endpoint failure}

The positive codimensional gain cannot in general be removed.

\begin{proposition}
There exist an unbounded complete $2$-PI space, a ball $B_0$, and a compact set $K\subset B_0$ such that
\begin{equation*}
 \cH_{0,\omega}(K;B_0)>0,
 \qquad
 \capw(K;\Lambda_0B_0)=0.
\end{equation*}
\end{proposition}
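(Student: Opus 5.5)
The plan is to use the example announced in the introduction. We work in $\R^3$ with Lebesgue measure, which is an unbounded complete $2$-PI space. Take $B_0=B(0,1)$, so $R=1$, and let $K=\{(t,0,0):|t|\le 1/2\}$ be a compact line segment inside $B_0$. We prove two things: that the $\delta=0$ content is positive, and that the relative capacity in the fixed outer domain $\Lambda_0B_0$ vanishes.

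\emph{Positive content.} With $\delta=0$, \eqref{eq:content-definition} becomes
\[
 \cH_{0,\omega}(K;B_0)=\inf\sum_i\omega(B_i)/r_i^2\simeq\inf\sum_i r_i,
\]
the infimum being over covers by balls of radius at most $1$. Orthogonal projection onto the $x_1$-axis is $1$-Lipschitz. It maps each ball $B_i$ into an interval of length $2r_i$, and it maps $K$ onto an interval of length $1$. Hence $\sum_i 2r_i\ge1$ for every admissible cover, and so $\cH_{0,\omega}(K;B_0)\ge c>0$.

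\emph{Zero capacity.} By monotonicity of relative capacity in the outer domain, it is enough to produce test functions supported in a fixed neighbourhood of $K$ inside $\Lambda_0B_0$. We use the standard logarithmic cutoff in the transverse variable. Write $\rho(x)=|(x_2,x_3)|$, fix $0<\epsilon<1/4$, and set
\[
 \phi_\epsilon(x)=\psi(x_1)\,h_\epsilon(\rho(x)).
\]
Here $\psi$ is a Lipschitz cutoff equal to $1$ on $[-1/2,1/2]$ and supported in $[-3/4,3/4]$. The factor $h_\epsilon$ equals $1$ for $\rho\le\epsilon^2$, equals $\log(\epsilon/\rho)/\log(1/\epsilon)$ for $\epsilon^2\le\rho\le\epsilon$, and equals $0$ for $\rho\ge\epsilon$. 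Then $\phi_\epsilon$ is Lipschitz, compactly supported in $B_0\subset\Lambda_0B_0$, and equal to $1$ on $K$.

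The energy of the logarithmic factor uses the two-dimensional computation in the transverse plane:
\[
 \int|\nabla h_\epsilon|^2\,dx_2\,dx_3=\frac{2\pi}{\log(1/\epsilon)}.
\]
Integrating over $x_1$ in a bounded interval multiplies this by a bounded factor. The term containing $\psi'$ is bounded by $C\epsilon^2$, the area of the transverse support. Altogether
\[
 \capw(K;\Lambda_0B_0)\le\int|\nabla\phi_\epsilon|^2\lesssim\frac{1}{\log(1/\epsilon)}+\epsilon^2\longrightarrow0
\]
as $\epsilon\to0$.

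The only point needing care is that relative capacity is defined through $N^{1,2}_0$ functions with quasicontinuous representatives. For a continuous Lipschitz test function, the function is its own representative and its minimal weak upper gradient is $|\nabla\phi_\epsilon|$, so the bound above applies directly. I do not expect a real obstacle here; the key fact is that capacity in dimension two is logarithmically critical.
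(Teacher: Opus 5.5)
Your proof is correct and matches the paper's: Lebesgue measure on $\R^3$, a segment $K$, the projection bound $\sum_i 2r_i\ge 1$ for positive $\delta=0$ content, and a logarithmic transverse cutoff with energy $\lesssim 1/\log(1/\epsilon)$. The differences are cosmetic. You use the product form $\psi(x_1)h_\epsilon(\rho)$ on the shrinking annulus $\epsilon^2\le\rho\le\epsilon$, whereas the paper uses a profile in $\dist(x,K)$ with a fixed outer radius $r_0$. Your version has the small advantage of treating the segment's endpoints explicitly, through the $O(\epsilon^2)$ term coming from $\psi'$.
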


\begin{proof}
Take $X=\R^3$, $d\omega=dx$, and let $K=[-1,1]\times\{0\}\times\{0\}$ be contained in $B_0$. Since $|B_i|/r_i^2\simeq r_i$, projection of a covering onto the first coordinate gives
\[
 \cH_{0,dx}(K;B_0)\gtrsim1.
\]

Fix a small tubular radius $r_0>0$ with the tube around $K$ contained in $\Lambda_0B_0$. For $0<\varepsilon<r_0/2$, let $u_\varepsilon$ equal $1$ when $\dist(x,K)\le\varepsilon$, equal $0$ when $\dist(x,K)\ge r_0$, and have logarithmic profile
\[
 u_\varepsilon(x)
 =\frac{\log(r_0/\dist(x,K))}{\log(r_0/\varepsilon)}
\]
on the intermediate region. A standard Lipschitz smoothing near the two transition surfaces preserves the energy estimate. In cylindrical coordinates around the segment,
\[
 \int_{\R^3}|\nabla u_\varepsilon|^2\,dx
 \lesssim\frac1{\log^2(r_0/\varepsilon)}
 \int_\varepsilon^{r_0}\frac{ds}{s}
 \lesssim\frac1{\log(r_0/\varepsilon)}\longrightarrow0.
\]
Thus the relative $2$-capacity of $K$ is zero.
\end{proof}

\begin{remark}
The endpoint example shows that the factor $(r/R)^\delta$ is not an artifact of the proof. It excludes the logarithmic capacity degeneracy at critical codimension two.
\end{remark}
 %
\section{Normalized capacitary covers and globalization}
\label{sec:normalized-covers}

Throughout this section, $(X,d,\omega)$ is an unbounded complete $2$-PI space, $\pi$ is a positive Radon measure, and the fixed dilations satisfy \eqref{eq:dilation-package} and \eqref{eq:Lambda0-choice}. All balls are open and carry their designated centers and radii. Functions may be real- or complex-valued; absolute values are understood accordingly.

\subsection{Normalized capacitary covers}

The globalization step uses the following package of hypotheses.

\begin{definition}
A pair $(\mathscr B,m)$, where
\begin{equation*}
 \mathscr B=\{B_j=B(x_j,r_j)\}_{j\in\N}
\end{equation*}
is a countable family of open balls and $m:X\to(0,\infty)$ is Borel, is called a \emph{normalized capacitary cover} for $(\omega,\pi)$ if the following conditions hold uniformly in $j$:
\begin{enumerate}[label=\textup{(C\arabic*)},leftmargin=2.4em]
\item the balls cover $X$ pointwise,
\begin{equation}
 X=\bigcup_{j\in\N}B_j;
 \label{eq:cover-pointwise}
\end{equation}
\item for the fixed final dilation $\Lambda$,
\begin{equation*}
 \sum_j\one_{\Lambda B_j}(x)\le N_\Lambda,
 \qquad x\in X;
\end{equation*}
\item there is $C_m\ge1$ such that
\begin{equation}
 C_m^{-1}r_j^{-1}\le m(x)\le C_mr_j^{-1},
 \qquad x\in B_j;
 \label{eq:cover-m-comparison}
\end{equation}
\item there is $c_\pi>0$ such that
\begin{equation}
 \pi(B_j)\ge c_\pi\frac{\omega(B_j)}{r_j^2};
 \label{eq:cover-normalization}
\end{equation}
\item there is $C_{\mathrm{cap}}>0$ such that
\begin{equation*}
 \pi(K)\le C_{\mathrm{cap}}\capw(K;\Lambda_0B_j)
\end{equation*}
for every compact $K\subset B_j$.
\end{enumerate}
\end{definition}

The covering and overlap assumptions play different roles for singular target measures.

\begin{remark}
Condition \textup{(C1)} must hold outside a set that is null for both $\omega$ and $\pi$. We impose the pointwise formulation \eqref{eq:cover-pointwise}, because an $\omega$-null complement can carry all the mass of a singular measure $\pi$. By contrast, for open balls and a measure $\omega$ with full support, the pointwise and $\omega$-almost-everywhere forms of \textup{(C2)} are equivalent: if the overlap exceeded $N_\Lambda$ at one point, it would exceed it on a nonempty open set of positive $\omega$-measure.
\end{remark}

The cover assumptions immediately give local control of the multiplier and local finiteness.

\begin{lemma}
\label{lem:cover-local-bounds}
Assume \textup{(C1)}--\textup{(C3)}. For every compact $K\subset X$ there exist constants
\[
 0<c_K\le C_K<\infty
\]
such that $c_K\le m\le C_K$ on $K$. Moreover, only finitely many balls $B_j$ meet $K$.
\end{lemma}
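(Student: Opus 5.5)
The plan rests on compactness plus bounded overlap. Fix a compact $K\subset X$. By (C1) the open balls $B_j$ cover $K$ pointwise, so compactness extracts a finite subfamily $B_{j_1},\dots,B_{j_n}$ with $K\subset\bigcup_{i\le n}B_{j_i}$. For every $x\in K$ choose some $i$ with $x\in B_{j_i}$; then (C3) gives
\[
 C_m^{-1}r_{j_i}^{-1}\le m(x)\le C_m r_{j_i}^{-1}.
\]
Setting $c_K:=C_m^{-1}\min_i r_{j_i}^{-1}$ and $C_K:=C_m\max_i r_{j_i}^{-1}$ gives the two-sided bound on $K$. Finiteness of the subfamily is exactly what makes these extrema positive and finite.

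For the local finiteness claim I will combine (C2) and (C3) to show that the balls meeting $K$ have radii bounded above and below, and then use overlap counting.

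\emph{Radius bounds.} If $B_j$ meets $K$ at a point $y$, then (C3) at $y$ gives $C_m^{-1}r_j^{-1}\le m(y)\le C_K$, hence $r_j\ge (C_mC_K)^{-1}=:s_K$. Likewise $C_m r_j^{-1}\ge m(y)\ge c_K$, hence $r_j\le C_m/c_K=:S_K$.

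\emph{Counting.} Since $K$ is compact and $X$ is proper, $K$ is covered by finitely many balls $B(z_1,s_K/2),\dots,B(z_M,s_K/2)$ with $z_l\in K$. Suppose $B_j$ meets $K$ at $y$, and pick $l$ with $y\in B(z_l,s_K/2)$. Then $d(x_j,z_l)<r_j+s_K/2$. I want $z_l\in\Lambda B_j$, which holds when $r_j+s_K/2\le\Lambda r_j$. Because $\Lambda\ge\Lambda_0>1$, it suffices that $s_K/2\le(\Lambda-1)r_j$. The lower bound $r_j\ge s_K$ gives this whenever $\Lambda\ge 3/2$, and $\Lambda_0>8\lambda_2+8$ guarantees that. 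So every ball meeting $K$ satisfies $z_l\in\Lambda B_j$ for at least one of the finitely many points $z_l$. By (C2) each $z_l$ lies in at most $N_\Lambda$ of the sets $\Lambda B_j$. Therefore at most $MN_\Lambda$ balls meet $K$.

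The only delicate point is ensuring that the dilation $\Lambda B_j$ captures a fixed finite set of points. The lower radius bound is what allows this, and it comes from the upper bound on $m$, which in turn needed the compactness argument of the first step. The upper bound $S_K$ is not strictly needed for the counting, but it confirms the radii are uniformly controlled. No doubling argument is required, since (C2) is stated pointwise.
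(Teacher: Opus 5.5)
Your proof is correct and has the same structure as the paper's: a finite subcover plus \textup{(C3)} gives the two-sided bounds on $m$, these bounds force a lower bound on the radii of balls meeting $K$, and a finite net combined with the pointwise overlap bound \textup{(C2)} limits the number of such indices (at most $MN_\Lambda$). The only difference is where the net lives. The paper also uses the upper radius bound and properness to net the bounded neighborhood containing the centers $x_j$, so each net point lies in $B_j$ itself and $\Lambda\ge1$ would suffice. You net $K$ directly and use the margin $\Lambda-1\ge1/2$ to place a net point in $\Lambda B_j$, which makes the upper radius bound and properness unnecessary; in fact your appeal to properness when covering $K$ is superfluous, since $K$ is already compact.
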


\begin{proof}
For each $x\in K$, choose $j(x)$ with $x\in B_{j(x)}$. Since $B_{j(x)}$ is open, \eqref{eq:cover-m-comparison} gives two-sided bounds for $m$ on a neighborhood of $x$. A finite subcover of $K$ yields the first assertion.

If $B_j\cap K\ne\varnothing$, choose $y\in B_j\cap K$. The compact-set bounds for $m(y)$ and \eqref{eq:cover-m-comparison} imply that all such radii $r_j$ lie in one fixed interval $[a_K,b_K]\subset(0,\infty)$. Their centers lie in the bounded neighborhood $\{x:d(x,K)<b_K\}$. This neighborhood has compact closure because a complete doubling space is proper. Cover it by finitely many balls of radius $a_K/4$. If the center $x_j$ belongs to one such net ball centered at $z$, then $d(z,x_j)<a_K/4\le r_j$, so $z\in B_j\subset\Lambda B_j$. The overlap bound \textup{(C2)} therefore permits at most $N_\Lambda$ such indices for each net ball. Hence only finitely many $B_j$ meet $K$.
\end{proof}

The local capacity condition also determines quasicontinuous representatives globally.

\begin{lemma}
Assume \textup{(C1)} and \textup{(C5)}. If $E\subset X$ is Borel and has global Sobolev $2$-capacity zero, then $\pi(E)=0$. Consequently, for every $u\in N^{1,2}_{\loc}(X)$, the quantity
\[
 \int_X|\widetilde u|^2\,d\pi\in[0,\infty]
\]
is independent of the chosen quasicontinuous representative.
\end{lemma}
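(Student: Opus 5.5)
The plan is to localize the global capacity-zero set to the balls of the cover and then apply the local capacitary domination (C5). Let $E\subset X$ be Borel with $\operatorname{Cap}_{2,\omega}(E)=0$. By (C1) the balls $B_j$ cover $X$ pointwise, so
\[
 E=\bigcup_j (E\cap B_j),
\]
and by countable subadditivity of $\pi$ it suffices to show $\pi(E\cap B_j)=0$ for each $j$. Since $\pi$ is Radon, it is inner regular on the Borel set $E\cap B_j$. It is therefore enough to prove $\pi(K)=0$ for every compact $K\subset E\cap B_j$.

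For such a compact $K$, monotonicity of the global Sobolev capacity gives $\operatorname{Cap}_{2,\omega}(K)=0$. Since $K\subset B_j\Subset\Lambda_0B_j$, the localization \eqref{eq:global-to-relative-capacity} yields
\[
 \capw(K;\Lambda_0B_j)=0.
\]
Condition (C5) then gives $\pi(K)\le C_{\mathrm{cap}}\capw(K;\Lambda_0B_j)=0$. Taking the supremum over compact $K$ shows $\pi(E\cap B_j)=0$, and summing over $j$ gives $\pi(E)=0$.

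For representative-independence, let $u\in N^{1,2}_{\loc}(X)$ and let $\widetilde u_1,\widetilde u_2$ be two quasicontinuous representatives. By the standard uniqueness of quasicontinuous representatives in complete doubling PI spaces (see \cite{BjornBjornBook}), they agree outside a set $E$ of zero Sobolev capacity. For $N^{1,2}_{\loc}$ this is first obtained locally: multiply $u$ by Lipschitz cutoffs on an exhausting sequence of balls and use countable subadditivity of capacity. We may take $E$ Borel, since capacity is outer, so zero-capacity sets are contained in Borel (even $G_\delta$) zero-capacity sets. The first part then gives $\pi(E)=0$, so $|\widetilde u_1|^2=|\widetilde u_2|^2$ holds $\pi$-a.e. and the two integrals coincide in $[0,\infty]$.

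The main subtlety is the Borel measurability step. The exceptional set where two representatives differ need not itself be Borel, so it must be enclosed in a Borel capacity-null set via outer regularity of $\operatorname{Cap}_{2,\omega}$. The localization from the $N^{1,2}_{\loc}$ setting to global capacity needs the same care. Everything else follows directly from (C1), (C5) and \eqref{eq:global-to-relative-capacity}.
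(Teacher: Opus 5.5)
Your proposal is correct and follows the paper's proof essentially step for step. You localize via \textup{(C1)} and inner regularity, apply \eqref{eq:global-to-relative-capacity} and \textup{(C5)} to compact $K\subset E\cap B_j$, and then invoke quasi-everywhere uniqueness of quasicontinuous representatives; your extra step of enclosing the exceptional set in a Borel (indeed $G_\delta$) capacity-null set via outer capacity makes explicit a measurability point the paper leaves implicit.
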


\begin{proof}
Let $K\subset E\cap B_j$ be compact. By \eqref{eq:global-to-relative-capacity},
\[
 \capw(K;\Lambda_0B_j)=0.
\]
Hence \textup{(C5)} implies $\pi(K)=0$. Inner regularity gives $\pi(E\cap B_j)=0$, and \textup{(C1)} together with countable subadditivity yields $\pi(E)=0$. Two quasicontinuous representatives of a local Newtonian function agree outside a set of global Sobolev capacity zero.
\end{proof}

The upper ball normalization follows from the capacity condition.

\begin{lemma}
\label{lem:cover-upper-normalization}
Assume \textup{(C5)}. Then
\begin{equation}
 \pi(B_j)\le C_D C_{\mathrm{cap}}\frac{\omega(B_j)}{r_j^2}
 \label{eq:cover-upper-normalization}
\end{equation}
for every $j$. Thus \textup{(C4)} and \textup{(C5)} imply the two-sided normalization
\[
 \pi(B_j)\simeq\frac{\omega(B_j)}{r_j^2}.
\]
\end{lemma}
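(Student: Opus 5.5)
Apply (C5) with $K=\overline{B(x_j,r_j/2)}$, or more precisely with an increasing sequence of compact sets exhausting $B_j$. Then bound the capacity of such a set by an explicit Lipschitz test function.

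\textbf{Test function.} Fix $0<s<1$ and let $K_s:=\overline{B(x_j,sr_j)}$. Since $X$ is proper, $K_s$ is compact, and $K_s\subset B_j$. Take the cutoff
\[
v(x):=\min\Bigl\{1,\ \bigl(2-d(x,x_j)/r_j\bigr)_+\Bigr\}.
\]
It equals $1$ on $B_j\supset K_s$ and vanishes outside $B(x_j,2r_j)$. Because $\Lambda_0>2$, its support is compactly contained in $\Lambda_0B_j$, so $v\in N_0^{1,2}(\Lambda_0B_j)$. It is $r_j^{-1}$-Lipschitz, hence $g_v\le r_j^{-1}\one_{2B_j}$. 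Therefore
\[
\capw(K_s;\Lambda_0B_j)\le\int_{2B_j}r_j^{-2}\,d\omega=\frac{\omega(2B_j)}{r_j^2}\le C_D\frac{\omega(B_j)}{r_j^2},
\]
using doubling \eqref{eq:doubling}.

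\textbf{Conclusion.} By (C5), $\pi(K_s)\le C_{\mathrm{cap}}C_D\,\omega(B_j)/r_j^2$ uniformly in $s$. Since $B_j=\bigcup_{s\uparrow1}K_s$ is an increasing union, continuity from below of $\pi$ gives \eqref{eq:cover-upper-normalization}. Combining this with (C4) yields the two-sided normalization.

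There is no real obstacle. The only points needing care are that the closed balls $K_s$ are compact (properness of $X$) and lie inside the open ball $B_j$, which is why the exhaustion is used rather than $\overline{B_j}$ itself.
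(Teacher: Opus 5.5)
Your proposal is correct and follows the paper's proof. The paper uses the cutoff $(1-\dist(x,B_j)/r_j)_+$, which, like your radial cutoff, equals $1$ on $B_j$, is supported in $\overline{2B_j}\Subset\Lambda_0B_j$ and satisfies $g\le r_j^{-1}\one_{2B_j}$; this bounds the capacity of every compact subset of $B_j$ by $\omega(2B_j)/r_j^2\le C_D\,\omega(B_j)/r_j^2$. The only cosmetic difference is that the paper passes from compact subsets to $B_j$ by inner regularity of $\pi$, whereas you use an explicit exhaustion by closed balls and continuity from below.
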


\begin{proof}
Let
\[
 \eta_j(x):=\left(1-\frac{\dist(x,B_j)}{r_j}\right)_+.
\]
Then $\eta_j=1$ on $B_j$, $\supp\eta_j\subset\overline{2B_j}\Subset\Lambda_0B_j$, and $g_{\eta_j}\le r_j^{-1}\one_{2B_j}$. Hence, for every compact $K\subset B_j$,
\[
 \capw(K;\Lambda_0B_j)
 \le\int_{2B_j}g_{\eta_j}^2\,d\omega
 \le\frac{\omega(2B_j)}{r_j^2}.
\]
Apply \textup{(C5)} and take the supremum over compact $K\subset B_j$ using inner regularity of $\pi$.
\end{proof}

The outer dilation in the capacity condition can be enlarged without changing the argument.

\begin{remark}
The structural value of $\Lambda_0$ need not be explicit. Since relative capacity decreases as the outer domain increases, verifying \textup{(C5)} with any fixed $\Lambda'\ge\Lambda_0$ is sufficient. Thus direct applications may work with a convenient larger dilation.
\end{remark}

\subsection{Local estimates}

The local trace and mixed estimates specialize as follows on the covering balls.

\begin{proposition}
Let $(\mathscr B,m)$ be a normalized capacitary cover and let $u\in N^{1,2}_{\loc}(X)$. Then, uniformly in $j$,
\begin{align}
 \int_{B_j}|\widetilde u-u_{B_j,\omega}|^2\,d\pi
 &\le C\int_{\Lambda B_j}g_u^2\,d\omega,
 \label{eq:cover-local-trace}\\
 \int_{B_j}\int_{B_j}|u(x)-\widetilde u(y)|^2\,d\omega(x)\,d\pi(y)
 &\le Cr_j^2\pi(B_j)\int_{\Lambda B_j}g_u^2\,d\omega.
 \notag
\end{align}
\end{proposition}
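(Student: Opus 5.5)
The plan is to specialize \cref{prop:mean-zero-trace} and \cref{thm:general-mixed-estimate} to the balls of the cover. For each index $j$, condition \textup{(C5)} is exactly hypothesis \eqref{eq:local-cap-domination} on $B=B_j$, with the uniform coefficient $A_{B_j}=C_{\mathrm{cap}}$. Applying \cref{prop:mean-zero-trace} therefore gives \eqref{eq:cover-local-trace}, with constant $C\,C_{\mathrm{cap}}$. This constant is independent of $j$, because the constant in that proposition depends only on the structural data and the fixed dilations $\tau,\Lambda_0,\Lambda$.

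For the mixed estimate, the first part of \cref{thm:general-mixed-estimate} already gives the bound
\[
 C\bigl[r_j^2\pi(B_j)+C_{\mathrm{cap}}\,\omega(B_j)\bigr]\int_{\Lambda B_j}g_u^2\,d\omega .
\]
It remains to check the normalization \eqref{eq:normalized-cap-coefficient} uniformly in $j$. By the lower normalization \textup{(C4)}, we have $\omega(B_j)\le c_\pi^{-1}r_j^2\pi(B_j)$. Hence $A_{B_j}\omega(B_j)\le C_{\mathrm{cap}}c_\pi^{-1}r_j^2\pi(B_j)$, which is \eqref{eq:normalized-cap-coefficient} with $C_0=C_{\mathrm{cap}}/c_\pi$. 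The second conclusion of \cref{thm:general-mixed-estimate} then yields the displayed mixed inequality, with a constant depending only on $C_{\mathrm{cap}}$, $c_\pi$, and the structural data.

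There is no real obstacle here; the only point requiring care is the interpretation of terms. The integrand $\widetilde u$ must be representative-independent $\pi$-a.e. on $B_j$. This follows because \textup{(C5)} makes $\pi\lfloor_{B_j}$ vanish on sets of zero relative capacity, as in \cref{thm:trace-capacity} and the subsequent lemma. If $\pi(B_j)=\infty$ the statement would be vacuous, but \cref{lem:cover-upper-normalization} shows that $\pi(B_j)<\infty$, so both sides are meaningful. Both sides may still equal $+\infty$ for $u\in N^{1,2}_{\loc}(X)$ only if $\int_{\Lambda B_j}g_u^2\,d\omega=\infty$, which cannot occur for local Newtonian functions on bounded balls in a proper space.
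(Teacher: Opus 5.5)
Your proposal is correct and follows the paper's proof: apply \cref{prop:mean-zero-trace} and \cref{thm:general-mixed-estimate} with $A_{B_j}=C_{\mathrm{cap}}$, and use the lower normalization \textup{(C4)} to verify \eqref{eq:normalized-cap-coefficient} uniformly in $j$. Your added remarks on representative-independence and finiteness are accurate, but the cited results already supply them.
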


\begin{proof}
Apply \cref{prop:mean-zero-trace,thm:general-mixed-estimate} with $A_{B_j}=C_{\mathrm{cap}}$. The lower normalization \eqref{eq:cover-normalization} gives
\[
 A_{B_j}\omega(B_j)\lesssim r_j^2\pi(B_j),
\]
which is exactly \eqref{eq:normalized-cap-coefficient}.
\end{proof}

The mixed oscillation and the mean-zero trace term are equivalent modulo ambient energy.

\begin{proposition}
\label{prop:mixed-trace-equivalence}
Let $B=B(x,r)$ satisfy $0<\omega(B),\pi(B)<\infty$ and $r^2\pi(B)\simeq\omega(B)$. Then
\begin{equation*}
 \begin{aligned}
 \omega(B)\int_B|\widetilde u-u_{B,\omega}|^2\,d\pi
 &\le\mathcal O_{\omega,\pi}(u;B)\\
 &\le2\omega(B)\int_B|\widetilde u-u_{B,\omega}|^2\,d\pi
 +C\omega(B)\int_{\lambda_2B}g_u^2\,d\omega.
 \end{aligned}
\end{equation*}
\end{proposition}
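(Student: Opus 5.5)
For the lower bound we expand the mixed oscillation in the $x$-variable about the $\omega$-mean. Fix $y\in B$ with $\widetilde u(y)$ finite and write $c=\widetilde u(y)$. The elementary identity
\[
 \int_B|u(x)-c|^2\,d\omega(x)=\int_B|u-u_{B,\omega}|^2\,d\omega+\omega(B)\,|u_{B,\omega}-c|^2
\]
holds because the cross term vanishes by the definition of $u_{B,\omega}$; for complex values we take real parts. Dropping the nonnegative first term gives
\[
 \int_B|u(x)-\widetilde u(y)|^2\,d\omega(x)\ge\omega(B)\,|\widetilde u(y)-u_{B,\omega}|^2 .
\]
Integrating in $d\pi(y)$ over $B$ and using Tonelli yields the left inequality. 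If $u$ is not square integrable on $B$, both sides are handled in $[0,\infty]$; this is harmless since $u\in N^{1,2}_{\loc}$ gives $u\in L^2(B)$.

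For the upper bound we repeat the computation in the proof of \cref{thm:general-mixed-estimate}. The pointwise inequality $|u(x)-\widetilde u(y)|^2\le2|u(x)-u_{B,\omega}|^2+2|\widetilde u(y)-u_{B,\omega}|^2$, integrated against $d\omega(x)\,d\pi(y)$, gives
\[
 \mathcal O_{\omega,\pi}(u;B)\le2\pi(B)\int_B|u-u_{B,\omega}|^2\,d\omega+2\omega(B)\int_B|\widetilde u-u_{B,\omega}|^2\,d\pi.
\]
Apply \eqref{eq:quadratic-pi} of \cref{lem:quadratic-pi} to the first term to bound it by $2C_2'\,r^2\pi(B)\int_{\lambda_2B}g_u^2\,d\omega$. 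The normalization $r^2\pi(B)\lesssim\omega(B)$ then replaces $r^2\pi(B)$ by $C\omega(B)$, which produces the stated right-hand side.

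No step is a real obstacle. The only point needing care is that $\widetilde u(y)$ is defined $\pi$-a.e. and is measurable, so that Tonelli applies. This holds whenever the integrals are finite; if a side is infinite, both inequalities hold trivially in $[0,\infty]$. Note also that only the upper half of the normalization $r^2\pi(B)\simeq\omega(B)$ is used.
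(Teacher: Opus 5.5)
Your proposal is correct and follows the paper's proof essentially verbatim. The lower bound comes from the variance identity, dropping the nonnegative term $\int_B|u-u_{B,\omega}|^2\,d\omega$ and integrating in $d\pi(y)$. The upper bound comes from the pointwise splitting $|u(x)-\widetilde u(y)|^2\le 2|u(x)-u_{B,\omega}|^2+2|\widetilde u(y)-u_{B,\omega}|^2$, followed by \eqref{eq:quadratic-pi} and $r^2\pi(B)\lesssim\omega(B)$.

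Your observation that only the upper half of the normalization is used is accurate. Your measurability remark is slightly loose: measurability of $\widetilde u$ does not follow from finiteness of the integrals. It is also not needed to handle infinite values, since both pointwise inequalities hold in $[0,\infty]$ and Tonelli applies directly. The paper does not address this point either.
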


\begin{proof}
For fixed $y$, the variance identity gives
\[
 \int_B|u(x)-\widetilde u(y)|^2\,d\omega(x)
 \ge\omega(B)|u_{B,\omega}-\widetilde u(y)|^2.
\]
Integrating in $d\pi(y)$ proves the lower bound. The upper bound follows from
\[
 |u(x)-\widetilde u(y)|^2
 \le2|u(x)-u_{B,\omega}|^2+2|\widetilde u(y)-u_{B,\omega}|^2,
\]
followed by Lemma~\ref{lem:quadratic-pi} and $r^2\pi(B)\simeq\omega(B)$.
\end{proof}

The local trace estimate gives the two reciprocal-scale estimates needed for globalization.

\begin{proposition}
For every $j$ and every $u\in N^{1,2}_{\loc}(X)$,
\begin{align}
 \frac1{r_j^2}\int_{B_j}|u|^2\,d\omega
 &\le C\left[
 \int_{\Lambda B_j}g_u^2\,d\omega+
 \int_{B_j}|\widetilde u|^2\,d\pi
 \right],
 \label{eq:local-fp-forward}\\
 \int_{B_j}|\widetilde u|^2\,d\pi
 &\le C\left[
 \int_{\Lambda B_j}g_u^2\,d\omega+
 \frac1{r_j^2}\int_{B_j}|u|^2\,d\omega
 \right].
 \label{eq:local-fp-reverse}
\end{align}
\end{proposition}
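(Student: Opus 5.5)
Both inequalities follow by splitting $u$ (or $\widetilde u$) against the constant $u_{B_j,\omega}$ and using two facts. The first is the local trace estimate \eqref{eq:cover-local-trace}. The second is the two-sided normalization $\pi(B_j)\simeq\omega(B_j)/r_j^2$, which combines \eqref{eq:cover-normalization} with \cref{lem:cover-upper-normalization}. Write $c:=u_{B_j,\omega}$. This is a finite number, since $u\in N^{1,2}_{\loc}(X)\subset L^2_{\loc}$.

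\emph{Forward estimate \eqref{eq:local-fp-forward}.} Split
\[
 \int_{B_j}|u|^2\,d\omega\le2\int_{B_j}|u-c|^2\,d\omega+2\omega(B_j)|c|^2 .
\]
\cref{lem:quadratic-pi} bounds the first term by $Cr_j^2\int_{\lambda_2B_j}g_u^2\,d\omega$, and $\lambda_2B_j\subset\Lambda B_j$ because $\Lambda\ge\Lambda_0>\lambda_2$. For the constant term, use the lower normalization \eqref{eq:cover-normalization}, $\omega(B_j)\le c_\pi^{-1}r_j^2\pi(B_j)$, to get
\[
 \omega(B_j)|c|^2\le c_\pi^{-1}r_j^2\int_{B_j}|c|^2\,d\pi
 \le 2c_\pi^{-1}r_j^2\left[\int_{B_j}|\widetilde u|^2\,d\pi+\int_{B_j}|\widetilde u-c|^2\,d\pi\right].
\]
The last integral is controlled by \eqref{eq:cover-local-trace}. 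Dividing by $r_j^2$ gives \eqref{eq:local-fp-forward}.

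\emph{Reverse estimate \eqref{eq:local-fp-reverse}.} Split in the other measure:
\[
 \int_{B_j}|\widetilde u|^2\,d\pi\le 2\int_{B_j}|\widetilde u-c|^2\,d\pi+2\pi(B_j)|c|^2 .
\]
The first term is again bounded by \eqref{eq:cover-local-trace}. For the second, use the upper normalization \eqref{eq:cover-upper-normalization} and then Jensen's inequality, $|c|^2\le\avgint_{B_j}|u|^2\,d\omega$:
\[
 \pi(B_j)|c|^2\le C_DC_{\mathrm{cap}}\frac{\omega(B_j)}{r_j^2}|c|^2\le \frac{C}{r_j^2}\int_{B_j}|u|^2\,d\omega .
\]
This proves \eqref{eq:local-fp-reverse}.

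\emph{Infinite values.} I expect no real obstacle; the only point needing care is the arithmetic with values in $[0,\infty]$. In both estimates every term that is subtracted or moved is finite: $|c|<\infty$, and $\pi(B_j)<\infty$ by \eqref{eq:cover-upper-normalization}. So the inequalities hold trivially whenever a right-hand side is infinite, and otherwise the manipulations are legitimate. The trace estimate \eqref{eq:cover-local-trace} holds for all $u\in N^{1,2}_{\loc}(X)$, so the chain is valid without further assumptions. All constants depend only on $c_\pi$, $C_{\mathrm{cap}}$, $C_D$ and the structural data, so they are uniform in $j$.
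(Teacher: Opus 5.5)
Your proposal is correct and follows essentially the same argument as the paper. You split against $u_{B_j,\omega}$, bound the forward estimate by the quadratic Poincar\'e inequality together with the local trace estimate and the lower normalization (C4), and bound the reverse estimate by the trace estimate, the upper normalization of \cref{lem:cover-upper-normalization}, and Jensen's inequality (which the paper leaves implicit).
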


\begin{proof}
Fix $B=B_j$ and $r=r_j$. Decompose $u=(u-u_{B,\omega})+u_{B,\omega}$. The quadratic Poincar\'e inequality controls the first part. For the constant part, integrate
\[
 |u_{B,\omega}|^2
 \le2|\widetilde u(y)|^2+2|\widetilde u(y)-u_{B,\omega}|^2
\]
in $d\pi(y)$, and use \eqref{eq:cover-local-trace} and the lower normalization \eqref{eq:cover-normalization}. This proves \eqref{eq:local-fp-forward}.

For the reverse estimate, \eqref{eq:cover-local-trace} and \eqref{eq:cover-upper-normalization} give
\[
 \begin{aligned}
 \int_B|\widetilde u|^2\,d\pi
 &\le2\int_B|\widetilde u-u_{B,\omega}|^2\,d\pi
 +2\pi(B)|u_{B,\omega}|^2\\
 &\le C\int_{\Lambda B}g_u^2\,d\omega
 +\frac C{r^2}\int_B|u|^2\,d\omega.
 \end{aligned}
\]
This completes the proof.
\end{proof}

\subsection{Global equivalence}

\begin{theorem}[Globalization over normalized capacitary covers]
\label{thm:global-fp}
Let $(\mathscr B,m)$ be a normalized capacitary cover and let $u\in N^{1,2}_{\loc}(X)$. Then
\begin{align}
 \int_Xm^2|u|^2\,d\omega
 &\le C\left[
 \int_Xg_u^2\,d\omega+
 \int_X|\widetilde u|^2\,d\pi
 \right],
 \label{eq:global-fp-forward}\\
 \int_X|\widetilde u|^2\,d\pi
 &\le C\left[
 \int_Xg_u^2\,d\omega+
 \int_Xm^2|u|^2\,d\omega
 \right],
 \label{eq:global-fp-reverse}
\end{align}
with all integrals interpreted in $[0,\infty]$. In particular, finiteness of either right-hand side implies finiteness of the corresponding left-hand side.

The constant depends only on the doubling and $2$-Poincar\'e data, $\lambda_2$, $\Lambda_0$, $\Lambda$, $N_\Lambda$, $C_m$, $c_\pi$, and $C_{\mathrm{cap}}$.
\end{theorem}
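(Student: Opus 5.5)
The plan is to sum the local reciprocal-scale estimates \eqref{eq:local-fp-forward} and \eqref{eq:local-fp-reverse} over the cover, using (C1) to control the left-hand sides from above and (C2) to control the summed right-hand sides.

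\emph{Forward estimate.} By (C1) and \eqref{eq:cover-m-comparison},
\[
 \int_Xm^2|u|^2\,d\omega\le\sum_j\int_{B_j}m^2|u|^2\,d\omega\le C_m^2\sum_j\frac1{r_j^2}\int_{B_j}|u|^2\,d\omega .
\]
Applying \eqref{eq:local-fp-forward} to each term bounds this by
\[
 C\sum_j\Bigl[\int_{\Lambda B_j}g_u^2\,d\omega+\int_{B_j}|\widetilde u|^2\,d\pi\Bigr].
\]
Since $B_j\subset\Lambda B_j$, (C2) gives $\sum_j\one_{\Lambda B_j}\le N_\Lambda$ and hence $\sum_j\one_{B_j}\le N_\Lambda$ pointwise. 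Monotone convergence for nonnegative series then bounds the sum by $CN_\Lambda$ times $\int_Xg_u^2\,d\omega+\int_X|\widetilde u|^2\,d\pi$.

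Two points need care here.
\begin{enumerate}[label=\textup{(\roman*)}]
\item The overlap bound for the $\pi$-integral must hold pointwise, not merely $\omega$-a.e. This is exactly why (C2) is stated pointwise.
\item The integral $\int|\widetilde u|^2\,d\pi$ must not depend on the representative. This follows from the preceding lemma on capacity-zero sets.
\end{enumerate}
Every term lies in $[0,\infty]$, so the inequalities hold even when some terms are infinite.

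\emph{Reverse estimate.} By (C1) and subadditivity, $\int_X|\widetilde u|^2\,d\pi\le\sum_j\int_{B_j}|\widetilde u|^2\,d\pi$. This is the step where a pointwise cover is essential, since $\pi$ may live on an $\omega$-null set. Applying \eqref{eq:local-fp-reverse} to each term, we then use the lower bound in \eqref{eq:cover-m-comparison}, namely $r_j^{-2}\le C_m^2m^2$ on $B_j$, to replace $r_j^{-2}\int_{B_j}|u|^2\,d\omega$ by $C_m^2\int_{B_j}m^2|u|^2\,d\omega$. Finally, bounded overlap of $\{\Lambda B_j\}$ yields \eqref{eq:global-fp-reverse}.

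The finiteness assertion is then immediate from the two inequalities. Tracking the constants shows they depend only on the listed data: the local constants come from \cref{prop:mean-zero-trace}, \cref{lem:quadratic-pi}, (C4), and \cref{lem:cover-upper-normalization}.

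The main obstacle is already absorbed into the local propositions, so the globalization itself is routine. The only subtle step is justifying that the $\pi$-integral over $X$ is dominated by the sum over the balls. I will handle it by noting that (C1) holds everywhere, so $\one_X\le\sum_j\one_{B_j}$ pointwise, and this inequality can be integrated against $\pi$ directly.
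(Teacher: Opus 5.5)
Your proposal is correct and follows essentially the same route as the paper. Like the paper, you bound the global integrals by sums over the cover using \textup{(C1)} and \textup{(C3)}, apply the local estimates \eqref{eq:local-fp-forward} and \eqref{eq:local-fp-reverse}, and resum using Tonelli together with the pointwise overlap bound \textup{(C2)}. Your remarks on why \textup{(C1)}--\textup{(C2)} must hold pointwise for the singular measure $\pi$, and on representative-independence, match the paper's own discussion.
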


\begin{proof}
By \textup{(C1)} and \textup{(C3)},
\[
 \int_Xm^2|u|^2\,d\omega
 \le C\sum_jr_j^{-2}\int_{B_j}|u|^2\,d\omega.
\]
Apply \eqref{eq:local-fp-forward}, sum, and use Tonelli together with \textup{(C2)}. This proves \eqref{eq:global-fp-forward} in $[0,\infty]$.

Similarly, \textup{(C1)} gives
\[
 \int_X|\widetilde u|^2\,d\pi
 \le\sum_j\int_{B_j}|\widetilde u|^2\,d\pi.
\]
Apply \eqref{eq:local-fp-reverse}, use \textup{(C2)} on the gradient terms, and use \textup{(C3)} on the reciprocal-radius terms. This proves \eqref{eq:global-fp-reverse}.
\end{proof}

The normalization has the expected scaling behavior.

\begin{remark}
If the metric is rescaled by $d_t=t d$, while $m$ is replaced by $t^{-1}m$ and $\pi$ by $t^{-2}\pi$, then the normalization \textup{(C4)} is preserved and all three terms in \eqref{eq:global-fp-forward}--\eqref{eq:global-fp-reverse} scale by $t^{-2}$. This explains the factor $\omega(B_j)/r_j^2$.
\end{remark}

Different normalized covers induce equivalent energy norms.

\begin{corollary}
Let $(\mathscr B,m)$ and $(\mathscr B',m')$ be normalized capacitary covers for the same pair $(\omega,\pi)$. Then
\[
 \int_Xg_u^2\,d\omega+\int_Xm^2|u|^2\,d\omega
 \simeq
 \int_Xg_u^2\,d\omega+\int_Xm'^2|u|^2\,d\omega
\]
for every $u\in N^{1,2}_{\loc}(X)$, with both sides interpreted in $[0,\infty]$. In particular, finiteness for one cover is equivalent to finiteness for the other. No pointwise comparison $m\simeq m'$ is asserted.
\end{corollary}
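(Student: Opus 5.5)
The plan is to apply \cref{thm:global-fp} twice, once with each cover, and to compare the two multiplier terms through the $\pi$-term. By \eqref{eq:global-fp-reverse} for $(\mathscr B,m)$,
\[
 \int_X|\widetilde u|^2\,d\pi
 \le C\left[\int_Xg_u^2\,d\omega+\int_Xm^2|u|^2\,d\omega\right].
\]
By \eqref{eq:global-fp-forward} for $(\mathscr B',m')$,
\[
 \int_Xm'^2|u|^2\,d\omega
 \le C'\left[\int_Xg_u^2\,d\omega+\int_X|\widetilde u|^2\,d\pi\right].
\]
Chaining the two gives
\[
 \int_Xg_u^2\,d\omega+\int_Xm'^2|u|^2\,d\omega
 \lesssim \int_Xg_u^2\,d\omega+\int_Xm^2|u|^2\,d\omega .
\]
Interchanging the roles of the two covers gives the reverse inequality.

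The representative $\widetilde u$ appears only as an intermediate quantity. It is well defined independently of the choice of representative by the representative-independence lemma above, since both covers satisfy (C1) and (C5).

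All the inequalities are read in $[0,\infty]$. \Cref{thm:global-fp} is stated there, and its hypothesis is only $u\in N^{1,2}_{\loc}(X)$. So there is no issue when some term is infinite: if the right-hand side of the chained inequality is finite, each intermediate quantity is finite. If it is infinite, the inequality is trivial. This gives the equivalence of finiteness.

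The constants depend only on the structural data of the two covers, which the statement allows. I do not expect a real obstacle here. The only point to check is that no pointwise comparison $m\simeq m'$ is used: the comparison passes entirely through the $\pi$-energy.
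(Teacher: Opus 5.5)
Your proposal is correct and is essentially the paper's own argument: chain \eqref{eq:global-fp-forward} and \eqref{eq:global-fp-reverse} for the two covers through the cover-independent $\pi$-term, then interchange the covers. The only difference is the starting point: the paper first applies the forward estimate for $(\mathscr B,m)$ and the reverse one for $(\mathscr B',m')$, while you begin the other way round.
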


\begin{proof}
Use \eqref{eq:global-fp-forward} for $(\mathscr B,m)$ and \eqref{eq:global-fp-reverse} for $(\mathscr B',m')$, and then exchange the roles of the two covers.
\end{proof}

\begin{remark}
Pointwise comparability of multipliers from different covers requires additional scale information; in the Euclidean critical-radius construction it follows from the scale-decay hypothesis, see \cref{lem:threshold-equivalence}.
\end{remark}

\subsection{Energy completions and their realization}

Let $\mathscr D=\LipLip(X)$. For $u\in\mathscr D$, define
\begin{align}
 \|u\|_{\Hpi}^2
 &:=\int_Xg_u^2\,d\omega+\int_X|u|^2\,d\pi,
 \label{eq:Hpi-norm}\\
 \|u\|_{\Hm}^2
 &:=\int_Xg_u^2\,d\omega+\int_Xm^2|u|^2\,d\omega.
 \label{eq:Hm-norm}
\end{align}
The potential integral is finite because $u$ is bounded with compact support and $\pi$ is finite on compact sets. Theorem~\ref{thm:global-fp} guarantees finiteness of the multiplier integral.

We now identify the two homogeneous energy completions.

\begin{corollary}
\label{cor:energy-equivalence}
The norms \eqref{eq:Hpi-norm} and \eqref{eq:Hm-norm} are equivalent on $\mathscr D$. Hence their completions are canonically identified by the extension of the identity map.
\end{corollary}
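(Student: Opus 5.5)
The plan is to apply \cref{thm:global-fp} to elements of $\mathscr D=\LipLip(X)$ and then pass to completions abstractly. First, $\LipLip(X)\subset N^{1,2}_{\loc}(X)$, and every $u\in\mathscr D$ is continuous. Hence $u$ is its own quasicontinuous representative, and $\int_X|\widetilde u|^2\,d\pi=\int_X|u|^2\,d\pi$. By the preceding lemma this integral does not depend on the chosen representative, because $\pi$ does not charge sets of zero Sobolev capacity.

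Next, both norms are finite on $\mathscr D$. For $\|u\|_{\Hpi}$ this holds because $u$ is bounded and supported in a compact set $F$, so that $\int|u|^2\,d\pi\le\|u\|_\infty^2\pi(F)<\infty$. For $\|u\|_{\Hm}$ we use \cref{lem:cover-local-bounds}: $m\le C_F$ on $F$, hence $\int m^2|u|^2\,d\omega\le C_F^2\|u\|_\infty^2\omega(F)<\infty$. We then apply \eqref{eq:global-fp-forward} and \eqref{eq:global-fp-reverse} to $u$ and add $\int g_u^2\,d\omega$ to both sides of each. This gives $\|u\|_{\Hm}^2\le (C+1)\|u\|_{\Hpi}^2$ and $\|u\|_{\Hpi}^2\le (C+1)\|u\|_{\Hm}^2$, which is the equivalence on $\mathscr D$.

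Before passing to completions I would check that these are genuine norms and not only seminorms. If $\|u\|_{\Hm}=0$, then $m^2|u|^2=0$ $\omega$-a.e. Since $m>0$, $u=0$ $\omega$-a.e., and since $u$ is continuous and $\omega$ has full support, $u\equiv0$. By the equivalence, the same conclusion holds for $\|\cdot\|_{\Hpi}$. Both quantities are square roots of sums of quadratic forms, so the triangle inequality holds.

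For the completions, two equivalent norms on the same vector space $\mathscr D$ have the same Cauchy sequences and the same null sequences. The identity map is therefore bounded in both directions and extends uniquely to a linear homeomorphism between the completions. This extension is the canonical identification, and its uniqueness comes from the density of $\mathscr D$ in each completion.

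I do not expect a real obstacle here. The only delicate point is making sure that \cref{thm:global-fp} applies with the continuous representative and that the norms are finite on $\mathscr D$, and both are handled by the two lemmas cited above.
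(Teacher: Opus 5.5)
Your argument is correct and is essentially the paper's: the paper gives no separate proof and treats the corollary as an immediate consequence of \cref{thm:global-fp} applied to $u\in\mathscr D$ (with $\widetilde u=u$ by continuity), followed by the abstract extension of the identity map to the completions. One justification needs fixing: in a general $2$-PI space $u\mapsto\int_Xg_u^2\,d\omega$ need not be a quadratic form (see the remark after \cref{prop:abstract-energy-realization}), so the triangle inequality should instead come from $g_{u+v}\le g_u+g_v$ and $g_{cu}=|c|g_u$ $\omega$-a.e.\ combined with Minkowski's inequality, and the completions are in general Banach rather than Hilbert spaces.
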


The multiplier completion has a concrete realization by local Newtonian functions.

\begin{proposition}
\label{prop:abstract-energy-realization}
Every element of the completion $\Hm$ has a unique representative
\[
 u\in N^{1,2}_{\loc}(X),
 \qquad g_u\in L^2(X,d\omega),
 \qquad mu\in L^2(X,d\omega).
\]
Under the canonical identification of \cref{cor:energy-equivalence}, this representative also satisfies $\widetilde u\in L^2(X,d\pi)$, and the $L^2(d\pi)$ component of any approximating sequence converges to $\widetilde u$.
\end{proposition}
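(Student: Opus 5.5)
Let $(u_n)\subset\mathscr D$ be Cauchy in $\|\cdot\|_{\Hm}$. The plan has four steps: show that the sequence converges in $N^{1,2}_{\loc}(X)$ to a function $u$, show that $u$ is a limit with the stated integrability, show that $u$ determines the completion element uniquely, and identify the $L^2(d\pi)$ limit with $\widetilde u$.

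\emph{Local convergence.} Fix a compact $K$. By \cref{lem:cover-local-bounds}, $m\ge c_K>0$ on $K$, and only finitely many balls $B_j$ meet $K$. Hence
\[
\int_K|u_n-u_k|^2\,d\omega\le c_K^{-2}\|u_n-u_k\|_{\Hm}^2 .
\]
The upper gradient of $u_n-u_k$ is controlled by $g_{u_n-u_k}$, which is Cauchy in $L^2(X,d\omega)$. So $(u_n)$ is Cauchy in $N^{1,2}$ on neighbourhoods of $K$; to see this, multiply by a Lipschitz cutoff and use $g_{\eta v}\le \eta g_v+|v|g_\eta$. Completeness of Newtonian spaces then yields a limit $u\in N^{1,2}_{\loc}(X)$. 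Moreover, along a subsequence, $u_n\to u$ pointwise outside a set of capacity zero; this follows from the standard fact that $N^{1,2}$-Cauchy sequences have subsequences converging quasi-everywhere, indeed quasi-uniformly, to the quasicontinuous representative.

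\emph{Integrability of the limit.} By Fatou, $mu\in L^2(d\omega)$ is immediate. Convergence $g_{u_n}\to G$ in $L^2$, together with lower semicontinuity of upper gradients under $N^{1,2}_{\loc}$ convergence, gives $g_{u-u_n}\le\liminf_k g_{u_k-u_n}$ in $L^2$. Hence $\|g_{u-u_n}\|_{L^2(X)}\to0$ and $\|m(u-u_n)\|_{L^2}\to0$, so $u$ is a limit in the $\Hm$-seminorm.

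\emph{Uniqueness.} If two Cauchy sequences are equivalent, their local limits agree $\omega$-a.e. by the $L^2_{\loc}$ bound above, hence as Newtonian functions. Conversely, if the limit is $0$, then $\|u_n\|_{\Hm}\to0$ by the convergence just proved. So the map from the completion to the representative $u$ is well defined and injective.

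\emph{The $\pi$-component.} Under \cref{cor:energy-equivalence}, $(u_n)$ is also Cauchy in $L^2(d\pi)$, with some limit $f$. Apply \eqref{eq:global-fp-reverse} to $u-u_n\in N^{1,2}_{\loc}$:
\[
\int_X|\widetilde u-u_n|^2\,d\pi\le C\|u-u_n\|_{\Hm}^2\to0 .
\]
Here $u_n$ is its own quasicontinuous representative since it is Lipschitz, and $\widetilde{u-u_n}=\widetilde u-u_n$ quasi-everywhere, hence $\pi$-a.e. by the lemma that $\pi$ does not charge sets of capacity zero. Therefore $f=\widetilde u$ in $L^2(d\pi)$, and in particular $\widetilde u\in L^2(X,d\pi)$.

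The main obstacle is the gradient step: showing $\|g_{u-u_n}\|_{L^2(X)}\to0$ globally, and not merely locally, for the Newtonian limit. I would handle it with the $L^2$ lower semicontinuity / Mazur-type lemma for weak upper gradients. These are available in \cite{BjornBjornBook} for local $N^{1,2}$ convergence, applied on an exhaustion by balls, followed by monotone convergence.
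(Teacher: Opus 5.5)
Your proposal is correct. The first part is the same as in the paper: Cauchy in $N^{1,2}$ on compacta because $m\ge c_K$ there by \cref{lem:cover-local-bounds}, then completeness and patching, then lower semicontinuity on an exhaustion followed by monotone convergence.

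You diverge in two places.
\begin{itemize}
\item You apply the lower-semicontinuity step to $u-u_n$ rather than to $u$, and so obtain global seminorm convergence $\|u-u_n\|_{\Hm}\to0$. The paper only records $\int_X g_u^2\,d\omega\le\sup_n\int_X g_{u_n}^2\,d\omega$ and identifies $mu$ as the $L^2$ limit of $mu_n$.
\item You identify the $\pi$-component in one stroke, by applying the global reverse inequality \eqref{eq:global-fp-reverse} to $u-u_n$ and using that $\pi$ does not charge capacity-zero sets. The paper works ball by ball instead. Local $N^{1,2}(\Lambda B_j)$ convergence and the local trace estimate \eqref{eq:cover-local-trace} give $\widetilde u_n\to\widetilde u$ in $L^2(B_j,d\pi)$, and the pointwise covering \textup{(C1)} globalizes.
\end{itemize}

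The paper's route needs only local convergence. Yours costs the extra global gradient step. That step should be stated for $L^2$ norms, not as a pointwise $\liminf$ inequality, and it needs no Mazur-type lemma. For fixed $n$ and relatively compact open $U$,
$\|g_{u-u_n}\|_{L^2(U)}\le\|g_{u-u_k}\|_{L^2(U)}+\|g_{u_k-u_n}\|_{L^2(X)}$;
let $k\to\infty$ and then exhaust $X$.

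In return your route gives more. Global convergence shows that the realization map is injective: a completion element whose representative is $0$ is itself $0$. So the completion genuinely embeds into $N^{1,2}_{\loc}(X)$. The paper's sequence-independence argument proves only that the representative is well defined, not this injectivity.
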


\begin{proof}
Let $(u_n)\subset\mathscr D$ be Cauchy in $\Hm$. Choose an exhaustion by relatively compact open sets
\[
 U_1\Subset U_2\Subset\cdots\Subset X,
 \qquad \bigcup_{\ell=1}^{\infty}U_\ell=X.
\]
By \cref{lem:cover-local-bounds}, for every $\ell$ there is $c_\ell>0$ such that $m\ge c_\ell$ on $U_\ell$. Hence
\[
 \|u_n-u_k\|_{L^2(U_\ell,d\omega)}^2
 \le c_\ell^{-2}\|m(u_n-u_k)\|_{L^2(X,d\omega)}^2.
\]
Together with the upper-gradient term, this shows that $(u_n)$ is Cauchy in $N^{1,2}(U_\ell)$. Completeness gives a limit $u^{(\ell)}\in N^{1,2}(U_\ell)$. Uniqueness of the $L^2(U_\ell,d\omega)$ limit implies
\[
 u^{(\ell+1)}=u^{(\ell)}
 \quad\omega\text{-a.e. on }U_\ell,
\]
so the local limits patch to a unique function $u\in N^{1,2}_{\loc}(X)$.

Since $(u_n)$ is Cauchy in $\Hm$, it is bounded in that norm. Lower semicontinuity of the minimal weak upper gradient under the local Newtonian convergence gives, for every $\ell$,
\[
 \int_{U_\ell} g_u^2\,d\omega
 \le \liminf_{n\to\infty}\int_{U_\ell} g_{u_n}^2\,d\omega
 \le \sup_n\int_X g_{u_n}^2\,d\omega.
\]
Letting $\ell\to\infty$ and using monotone convergence yields
\[
 \int_X g_u^2\,d\omega
 \le \sup_n\int_X g_{u_n}^2\,d\omega<\infty.
\]
Moreover, $(mu_n)$ is Cauchy in $L^2(X,d\omega)$; let $h$ be its global limit. On each $U_\ell$, the function $m$ is bounded above and $u_n\to u$ in $L^2(U_\ell,d\omega)$, whence
\[
 mu_n\longrightarrow mu
 \quad\text{in }L^2(U_\ell,d\omega).
\]
Therefore $h=mu$ almost everywhere on every $U_\ell$, and hence $mu\in L^2(X,d\omega)$. If $(v_n)\subset\mathscr D$ is another sequence representing the same completion element, then $\|u_n-v_n\|_{\Hm}\to0$; the preceding local estimate yields $u_n-v_n\to0$ in $N^{1,2}(U_\ell)$ for every $\ell$, proving independence of the representative.

By \cref{cor:energy-equivalence}, $(u_n)$ is also Cauchy in $L^2(X,d\pi)$; denote its limit by $q$. Fix $j$. Since $u_n\to u$ in $N^{1,2}(\Lambda B_j)$, the local trace estimate \eqref{eq:cover-local-trace}, applied to $u_n-u$, gives
\[
 \widetilde u_n\longrightarrow\widetilde u
 \quad\text{in }L^2(B_j,d\pi).
\]
The restriction of the global $L^2(d\pi)$ limit $q$ to $B_j$ must therefore equal $\widetilde u$. Since the countable family $\{B_j\}$ covers $X$, $q=\widetilde u$ $\pi$-almost everywhere on $X$. This proves the asserted concrete realization.
\end{proof}

\begin{remark}
In a general $2$-PI space, $u\mapsto\int g_u^2\,d\omega$ need not be quadratic. The energy completions are Hilbert spaces precisely in the infinitesimally Hilbertian case, i.e. when the Cheeger energy is quadratic; see \cite{AmbrosioGigliSavare}.
\end{remark}

Adding the ambient $L^2(d\omega)$ term gives the corresponding inhomogeneous equivalence.

\begin{corollary}
Adding the common term $\int_X|u|^2\,d\omega$ to \eqref{eq:Hpi-norm} and \eqref{eq:Hm-norm} yields equivalent norms and canonically identified completions. In this inhomogeneous case the function realization is immediate from the common $L^2(d\omega)$ term.
\end{corollary}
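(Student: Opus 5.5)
The statement to prove is the corollary on inhomogeneous norms. Write
\[
 N_\pi(u)^2:=\|u\|_{\Hpi}^2+\int_X|u|^2\,d\omega,
 \qquad
 N_m(u)^2:=\|u\|_{\Hm}^2+\int_X|u|^2\,d\omega .
\]
The proof has three parts: equivalence of norms on $\mathscr D$, identification of the completions, and their realization as functions.

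\emph{Equivalence on $\mathscr D$.} For $u\in\mathscr D$, \cref{thm:global-fp} gives
\[
 \int_Xm^2|u|^2\,d\omega\le C\|u\|_{\Hpi}^2,
 \qquad
 \int_X|u|^2\,d\pi\le C\|u\|_{\Hm}^2 .
\]
Adding $\int_X g_u^2\,d\omega+\int_X|u|^2\,d\omega$ to both sides of each inequality yields
\[
 N_m(u)\le C\,N_\pi(u),
 \qquad
 N_\pi(u)\le C\,N_m(u)
 \qquad (u\in\mathscr D).
\]
The identity map on $\mathscr D$ is therefore bi-Lipschitz for the two inhomogeneous norms. It extends uniquely to an isomorphism of the completions, and this isomorphism is the canonical identification.

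\emph{Function realization.} Let $(u_n)\subset\mathscr D$ be Cauchy in $N_m$. Then $u_n\to u$ in $L^2(X,d\omega)$, and the upper-gradient term makes $(u_n)$ Cauchy in $N^{1,2}(X)$. Completeness of $N^{1,2}(X)$ gives a limit $u\in N^{1,2}(X)$, so no exhaustion argument is needed here. This is what ``immediate'' in the statement refers to: the common $L^2(d\omega)$ term directly controls the function. As in \cref{prop:abstract-energy-realization}, we then check:
\begin{itemize}
 \item $mu_n\to mu$ in $L^2(X,d\omega)$, by local boundedness of $m$ (\cref{lem:cover-local-bounds}) together with uniqueness of the limit.
 \item Applying \eqref{eq:cover-local-trace} to $u_n-u$ on each $B_j$ shows $\widetilde u_n\to\widetilde u$ in $L^2(B_j,d\pi)$. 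Since the $B_j$ cover $X$ by (C1), the $L^2(d\pi)$ limit equals $\widetilde u$ $\pi$-almost everywhere.
\end{itemize}
Independence of the chosen representing sequence is the same argument applied to $u_n-v_n$.

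\emph{Main obstacle.} The only delicate point is that a Newtonian limit is defined only $\omega$-almost everywhere, while the $L^2(d\pi)$ limit must be identified $\pi$-almost everywhere for a possibly singular $\pi$. This is handled, exactly as before, by the local trace estimate \eqref{eq:cover-local-trace} combined with the pointwise covering (C1).
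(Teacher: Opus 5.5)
Your proposal is correct and follows the route the paper intends. The paper gives no separate proof: it treats the equivalence as \cref{cor:energy-equivalence} with the common $L^2(d\omega)$ term added, and the realization as the argument of \cref{prop:abstract-energy-realization} with the exhaustion replaced by global convergence in $N^{1,2}(X)$. The one detail worth making explicit is that \eqref{eq:cover-local-trace} is a mean-zero estimate, so on each $B_j$ you also need $(u_n-u)_{B_j,\omega}\to0$. This follows from the $L^2(d\omega)$ convergence and $\pi(B_j)<\infty$; alternatively, apply \eqref{eq:local-fp-reverse} directly to $u_n-u$.
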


\subsection{Sharpness and stability}

The next examples show that the two load-bearing cover hypotheses cannot be discarded.

\begin{proposition}
The following statements hold.
\begin{enumerate}[label=\textup{(\alph*)}]
\item The lower bound \textup{(C4)} cannot be omitted. Indeed, on $\R^d$ take a unit-scale bounded-overlap cover, $m\equiv1$, and $\pi\equiv0$. Conditions \textup{(C1)}--\textup{(C3)} and \textup{(C5)} hold, but \eqref{eq:global-fp-forward} would reduce to the false inequality $\|u\|_2^2\lesssim\|\nabla u\|_2^2$ on $\R^d$.
\item Condition \textup{(C5)} does not follow from ball normalization. In $\R^3$, let $B_j=B(j,R_0)$, $j\in\mathbb Z^3$, with $R_0$ fixed large enough to cover $\R^3$, let $m\equiv1$, and let $\pi=\sum_{j\in\mathbb Z^3}\delta_j$. Then $\pi(B_j)\simeq |B_j|/R_0^2$, while
\[
 \operatorname{cap}_{2} (\{j\};\Lambda_0B_j)=0.
\]
Thus \textup{(C5)} fails, and $\int|\widetilde u|^2\,d\pi$ is not representative-independent.
\end{enumerate}
\end{proposition}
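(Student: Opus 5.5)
For (a), I will verify \textup{(C1)}--\textup{(C3)} and \textup{(C5)} directly and then exhibit the failure of \eqref{eq:global-fp-forward}. Take $B_j=B(z_j,1)$ with $\{z_j\}$ a maximal $1/2$-separated net in $\R^d$. Then \textup{(C1)} holds pointwise. Fixed dilates $\Lambda B_j$ have overlap bounded by a volume-counting argument, which gives \textup{(C2)}. With $m\equiv1$ and $r_j=1$, \textup{(C3)} holds with $C_m=1$. Condition \textup{(C5)} is trivial because $\pi(K)=0$. Inequality \eqref{eq:global-fp-forward} would then read $\int|u|^2\,dx\le C\int|\nabla u|^2\,dx$ for every $u\in\LipLip(\R^d)$. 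This fails under dilation: if $u_t(x)=\varphi(x/t)$ with $\varphi\in\LipLip$ nonzero, the left side scales like $t^d$ and the right side like $t^{d-2}$. Letting $t\to\infty$ gives a contradiction. Only \textup{(C4)} fails, since $\pi(B_j)=0$.

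For (b), the centers $j\in\mathbb Z^3$ form a lattice. Any $R_0>\sqrt3/2$ gives a pointwise cover, and bounded overlap of $\Lambda B_j$ follows again by lattice counting, with a bound depending on $\Lambda R_0$. Take $m\equiv1$, which is comparable to $R_0^{-1}$ since $R_0$ is fixed. For the normalization, $\pi(B_j)=\#(\mathbb Z^3\cap B(j,R_0))$, which is comparable to $R_0^3$ with constants depending on $R_0$. Since $R_0$ is fixed, this is comparable to $|B_j|/R_0^2\simeq R_0$. So \textup{(C4)} and the upper normalization hold, with constants depending on $R_0$; uniformity in $j$ is all that is needed.

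The substantive step is $\operatorname{cap}_2(\{j\};\Lambda_0B_j)=0$ in $\R^3$, which I will prove with a logarithmic cutoff around the point, after translating to $j=0$. For $0<\varepsilon<r_0<1$, let $u_\varepsilon=1$ on $B(0,\varepsilon^2)$, $u_\varepsilon=0$ outside $B(0,\varepsilon)$, and
\[
 u_\varepsilon(x)=\frac{\log(\varepsilon/|x|)}{\log(1/\varepsilon)}
\]
in between. Its energy is
\[
 \frac{C}{\log^2(1/\varepsilon)}\int_{\varepsilon^2}^{\varepsilon}s^{-2}s^2\,ds\le\frac{C\varepsilon}{\log^2(1/\varepsilon)},
\]
which tends to $0$. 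Alternatively, the simpler cutoff $(1-|x|/\varepsilon)_+$ has energy $\varepsilon^{-2}|B(0,\varepsilon)|\simeq\varepsilon$, also tending to $0$; that suffices in dimension three. Hence \textup{(C5)} fails, since it would force $1=\pi(\{j\})\le C_{\mathrm{cap}}\cdot0$.

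For the failure of representative-independence, I will use the fact that singletons have zero Sobolev capacity in $\R^3$. Thus a quasicontinuous representative may be altered on $\mathbb Z^3$ and remain quasicontinuous. Concretely, take $u\in\LipLip$ with $u=1$ near $0$ and modify it at the origin to $0$. The modified function equals $u$ off a capacity-zero set, so it is still a valid quasicontinuous representative, yet the value of $\int|\widetilde u|^2\,d\pi$ changes. The only point needing care is this claim that modification on a capacity-zero set preserves quasicontinuity. It follows from the definition: any open set of small capacity can be enlarged by a neighbourhood of the point whose capacity is arbitrarily small. I expect that to be the main (mild) obstacle.
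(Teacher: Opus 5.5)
Your proposal is correct and follows the same route as the paper. The paper gives no separate proof: it justifies both parts only by the inline remarks in the statement, namely that the conditions are checked directly, that \eqref{eq:global-fp-forward} collapses to $\|u\|_2^2\lesssim\|\nabla u\|_2^2$, and that a point in $\R^3$ has zero relative $2$-capacity. You supply the details it leaves implicit: the dilation argument with $u_t=\varphi(\cdot/t)$, explicit cutoffs showing $\operatorname{cap}_2(\{j\};\Lambda_0B_j)=0$, and the check that changing a quasicontinuous representative on a capacity-zero singleton keeps it quasicontinuous while changing $\int|\widetilde u|^2\,d\pi$.
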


The cover condition is stable under adding a controlled multiplier measure.

\begin{lemma}
If $(\mathscr B,m)$ is a normalized capacitary cover for $(\omega,\pi)$ and $c>0$, then it is a normalized capacitary cover for
\[
 d\pi_c:=d\pi+c m^2\,d\omega,
\]
with constants depending additionally on $c$.
\end{lemma}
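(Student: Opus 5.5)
The plan is to check the five conditions (C1)--(C5) for $(\mathscr B,m)$ and the new measure $\pi_c=\pi+c\,m^2\omega$ one at a time. Conditions (C1)--(C3) involve only the balls and the multiplier $m$, so they carry over unchanged. First I will note that $\pi_c$ is a positive Radon measure: by \cref{lem:cover-local-bounds}, $m$ is bounded on compact sets, so $m^2\omega$ is finite on compact sets and Borel regular.

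For (C4), the lower normalization gets easier, since $\pi_c(B_j)\ge\pi(B_j)\ge c_\pi\,\omega(B_j)/r_j^2$.

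The only real work is (C5). The $\pi$ part is handled by hypothesis, so it suffices to prove
\[
 \int_K m^2\,d\omega\le C\,\capw(K;\Lambda_0B_j),\qquad K\subset B_j\text{ compact}.
\]
On $B_j$, (C3) gives $m^2\le C_m^2r_j^{-2}$, so the left side is at most $C_m^2r_j^{-2}\omega(K)$. I would try two routes to bound this.

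\emph{Route one: the zero-boundary Sobolev inequality.} Take a test function $v$ from \cref{lem:lipschitz-capacity-approx} for $K$ in $\Lambda_0B_j$. Then $v=1$ on $K$, so $\omega(K)\le\int_{\Lambda_0B_j}|v|^2\,d\omega$. Apply \cref{lem:zero-boundary-sobolev} with $\Theta=\Lambda_0$, and use Hölder (the $2\kappa$-norm controls the $2$-norm on averages), to obtain
\[
 \int_{\Lambda_0B_j}|v|^2\,d\omega\le C\,r_j^2\int g_v^2\,d\omega.
\]
One point needs checking: \cref{lem:zero-boundary-sobolev} bounds the left side by the gradient integral over $\Theta'B$. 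Since $v$ vanishes outside $\Lambda_0B_j$, that integral is just $\int_{\Lambda_0B_j}g_v^2\,d\omega$, and normalizing the averages costs only a doubling constant.

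\emph{Route two: avoid Sobolev entirely.} The measure $\omega\lfloor_{B_j}\,r_j^{-2}$ satisfies the growth condition \eqref{eq:subcritical-growth} on $B_j$ with $\delta=1$ (or any $\delta\le 2$). Indeed, for $y\in B_j$ and $s\le r_j$,
\[
 \frac{\omega(B(y,s))}{r_j^2}=\Bigl(\frac{s}{r_j}\Bigr)^2\frac{\omega(B(y,s))}{s^2}.
\]
\Cref{thm:growth-capacity} then gives $r_j^{-2}\omega(K)\le C\,\capw(K;\Lambda_0B_j)$ directly.

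I prefer route two because it is cleaner and invokes only already-stated results. The only subtle point is the uniformity of the constant. That constant depends on $C_{\mathrm{gr}}=1$, on $\delta$, and on the structural data, none of which depends on $j$.

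Combining, I get $\pi_c(K)\le\bigl(C_{\mathrm{cap}}+cC_m^2C\bigr)\capw(K;\Lambda_0B_j)$. This shows the constants depend additionally only on $c$, as claimed. I do not expect a serious obstacle. The step deserving care is the uniform capacity bound for $\omega(K)/r_j^2$, which route two settles.
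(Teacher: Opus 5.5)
Your route one is exactly the paper's proof. For every admissible $v\in N_0^{1,2}(\Lambda_0B_j)$, the zero-boundary Sobolev--Poincar\'e inequality gives $\omega(K)\le\int_{\Lambda_0B_j}|v|^2\,d\omega\le Cr_j^2\int_{\Lambda_0B_j}g_v^2\,d\omega$, and the bound $m^2\le C_m^2r_j^{-2}$ on $B_j$ then closes \textup{(C5)}.

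Your preferred route two is also correct, and it is genuinely different. The measure $r_j^{-2}\omega$ satisfies \eqref{eq:subcritical-growth} on $B_j$ with $\delta=2$ and $C_{\mathrm{gr}}=1$, in fact with equality. Hence \cref{thm:growth-capacity} gives $r_j^{-2}\omega(K)\le C\capw(K;\Lambda_0B_j)$ with a constant independent of $j$. There is no circularity, since \cref{thm:content-capacity} does not use this lemma.

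What route two buys is conceptual. It shows that the added term $c\,m^2\,d\omega$ is a maximal-gain instance of the same growth class, so the stability lemma becomes a corollary of the main theorem. It also bypasses \cref{lem:zero-boundary-sobolev}, whose proof in the paper is only sketched.

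The price is that you invoke the whole boxing machinery (reverse doubling, telescoping averages, the $5$-covering lemma) to obtain what is essentially a one-line Friedrichs-type inequality. Also, route two does not avoid Sobolev estimates entirely, as you claim. Its telescoping step rests on \cref{lem:quadratic-pi}, which the paper derives from the Keith--Zhong and Haj\l asz--Koskela Sobolev--Poincar\'e improvement.

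Your extra check that $\pi_c$ is a Radon measure, via local boundedness of $m$ from \cref{lem:cover-local-bounds}, is a detail the paper leaves implicit.
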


\begin{proof}
Conditions \textup{(C1)}--\textup{(C3)} are unchanged, and \textup{(C4)} is immediate. For compact $K\subset B_j$, every admissible $v\in N_0^{1,2}(\Lambda_0B_j)$ satisfies
\[
 \omega(K)\le\int_{\Lambda_0B_j}|v|^2\,d\omega
 \le Cr_j^2\int_{\Lambda_0B_j}g_v^2\,d\omega
\]
by the zero-boundary Sobolev--Poincar\'e inequality. Since $m^2\le C_m^2r_j^{-2}$ on $B_j$,
\[
 \int_Km^2\,d\omega\le C\capw(K;\Lambda_0B_j).
\]
Combining this with \textup{(C5)} proves the claim.
\end{proof}

The same globalization argument applies to covers constructed only at a prescribed finite scale.

\begin{remark}
A finite-scale normalized capacitary cover is simply a normalized cover for which $r_j\le r_*$ uniformly. The proof of \cref{thm:global-fp} is unchanged and does not use the magnitude of the radii. The capped construction in \cref{prop:finite-scale-cover} produces such a cover for the augmented measure used in the finite-scale application.
\end{remark}

\subsection{Growth-generated covers}

Local positive scale gain generates normalized capacitary covers.

\begin{corollary}
\label{cor:growth-generated-cover}
Assume \textup{(C1)}--\textup{(C3)}. Suppose that, uniformly in $j$,
\begin{equation}
 \pi(B(y,r))
 \le C_{\mathrm{gr}}
 \left(\frac r{r_j}\right)^\delta
 \frac{\omega(B(y,r))}{r^2},
 \qquad y\in B_j,\quad0<r\le r_j,
 \label{eq:cover-growth}
\end{equation}
and that the lower normalization \eqref{eq:cover-normalization} holds. Then $(\mathscr B,m)$ is a normalized capacitary cover, and all conclusions of \cref{thm:global-fp,cor:energy-equivalence,prop:abstract-energy-realization} follow.
\end{corollary}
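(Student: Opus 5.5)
The plan is to verify the single missing axiom \textup{(C5)} and then invoke the earlier results. Conditions \textup{(C1)}--\textup{(C3)} are assumed outright, and \textup{(C4)} is precisely the assumed lower normalization \eqref{eq:cover-normalization}. So only the capacitary domination
\[
 \pi(K)\le C_{\mathrm{cap}}\capw(K;\Lambda_0B_j),\qquad K\subset B_j\ \text{compact},
\]
requires proof, and $C_{\mathrm{cap}}$ must be uniform in $j$.

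For fixed $j$, set $B_0:=B_j=B(x_j,r_j)$ and $R:=r_j$. In this notation, hypothesis \eqref{eq:cover-growth} says exactly that $\pi$ satisfies the $\delta$-subcritical growth condition \eqref{eq:subcritical-growth} on $B_0$ with constant $C_{\mathrm{gr}}$. \Cref{thm:growth-capacity} then gives $\pi(K)\le C\capw(K;\Lambda_0B_j)$ for every compact $K\subset B_j$.

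The one point needing care is uniformity in $j$. The constant in \cref{thm:growth-capacity} depends only on $C_{\mathrm{gr}}$, $\delta$ and the structural data, through the constant $C_\delta$ of \cref{thm:content-capacity}. None of these depends on the ball, because that theorem is stated uniformly for every ball $B_0$. Since $C_{\mathrm{gr}}$ and $\delta$ are fixed across $j$ by hypothesis, \textup{(C5)} holds with one constant $C_{\mathrm{cap}}$. We also need the outer dilation $\Lambda_0$ appearing in \textup{(C5)} to be the same structural $\Lambda_0$ from \eqref{eq:Lambda0-choice} that is used in \cref{thm:content-capacity}; it is, because that section fixes $\Omega_0=\Lambda_0B_0$ with this value.

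Therefore $(\mathscr B,m)$ is a normalized capacitary cover. \Cref{thm:global-fp}, \cref{cor:energy-equivalence} and \cref{prop:abstract-energy-realization} are all stated for an arbitrary normalized capacitary cover, so their conclusions follow immediately, with constants depending on the listed data.

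No real obstacle is expected. The only thing to check is that the constants produced by \cref{thm:content-capacity} and \cref{thm:growth-capacity} are scale-invariant, which holds because both are stated uniformly over balls.
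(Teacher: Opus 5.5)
Your proposal is correct and matches the paper's proof. Both apply \cref{thm:growth-capacity} on each $B_j$ with $B_0=B_j$ and $R=r_j$, observe that the resulting capacity constant is uniform in $j$, and then invoke the results stated for arbitrary normalized capacitary covers. The paper also remarks that the upper ball normalization follows from \eqref{eq:cover-growth} at $r=r_j$ or from \cref{lem:cover-upper-normalization}; this is not one of the axioms (C1)--(C5), so leaving it out costs you nothing.
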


\begin{proof}
Apply \cref{thm:growth-capacity} on each $B_j$ with $R=r_j$. The resulting capacity constant is uniform. The upper ball normalization follows either from \eqref{eq:cover-growth} at $r=r_j$ or from \cref{lem:cover-upper-normalization}.
\end{proof}
\section{Verification classes and optimality}
\label{sec:verification}

This section verifies the abstract hypotheses of \cref{sec:content-capacity,sec:normalized-covers} in the principal Euclidean weighted setting. We then treat reverse-H\"older function potentials, Carnot groups, and singular lower-dimensional measures, and clarify the role of the quadratic PI assumption.

\subsection{Euclidean measure potentials and the critical radius}

Throughout this subsection,
\begin{equation*}
 d\omega=dw=w(x)\,dx
\end{equation*}
on $\R^d$, $d\ge3$, where
\begin{equation}
 w\in A_2(\R^d)\cap RD_\beta,
 \qquad \beta>2.
 \label{eq:A2-RD-assumption}
\end{equation}
Thus $dw$ is doubling, supports a quadratic weighted Poincar\'e inequality, and satisfies
\begin{equation}
 w(B(x,tr))\ge c_{\mathrm{RD}}t^\beta w(B(x,r)),
 \qquad t\ge1.
 \label{eq:weighted-reverse-doubling}
\end{equation}

For $w\in A_2(\R^d)$, the metric Newtonian space on $(\R^d,|\cdot|,w\,dx)$ agrees with the classical weighted Sobolev space, with equivalent first-order norms; see \cite{HKST,FabesKenigSerapioni}. In particular, for $u\in W_{w,\loc}^{1,2}$ the Euclidean gradient $|\nabla u|$ is an admissible $2$-weak upper gradient, and the abstract estimates of \cref{sec:preliminaries,sec:content-capacity,sec:normalized-covers} may be written with $g_u$ replaced by $|\nabla u|$.

Fix a doubling upper exponent $Q_w>2$ such that
\begin{equation}
 w(B(x,tr))\le C t^{Q_w}w(B(x,r)),
 \qquad t\ge1.
 \label{eq:weighted-upper-dimension}
\end{equation}

Let $\pi\not\equiv0$ be a positive Radon measure and define
\begin{equation*}
 \PhiPi(x,r):=\frac{r^2\pi(B(x,r))}{w(B(x,r))}.
\end{equation*}
We assume the scale-decay condition
\begin{equation}
 \PhiPi(x,r)
 \le C_0\left(\frac rR\right)^\delta\PhiPi(x,R),
 \qquad x\in\R^d,\quad0<r<R,
 \label{eq:global-scale-decay}
\end{equation}
for some $\delta>0$, and the controlled-enlargement condition
\begin{equation}
 \pi(B(x,2r))
 \le C_M\left[\pi(B(x,r))+\frac{w(B(x,r))}{r^2}\right]
 \label{eq:controlled-enlargement}
\end{equation}
for every $x\in\R^d$ and $r>0$. These are the measure hypotheses used in the generalized degenerate Schr\"odinger framework of \cite{BuiDoTrong2021}.

The next elementary consequence will be used repeatedly.

\begin{lemma}
\label{lem:Phi-polynomial-growth}
There exist $C\ge1$ and $\kappa>0$ such that
\begin{equation*}
 \PhiPi(x,tr)+1
 \le C t^\kappa\bigl(\PhiPi(x,r)+1\bigr),
 \qquad t\ge1.
\end{equation*}
\end{lemma}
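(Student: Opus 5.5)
The plan is to use only the controlled-enlargement condition \eqref{eq:controlled-enlargement} together with the upper doubling bound \eqref{eq:weighted-upper-dimension} for $w$. The scale-decay hypothesis \eqref{eq:global-scale-decay} controls the wrong direction and is not needed here. The first step is to derive a one-step doubling estimate for $\PhiPi+1$. Multiplying \eqref{eq:controlled-enlargement} by $(2r)^2/w(B(x,2r))$ gives
\[
 \PhiPi(x,2r)
 \le 4C_M\frac{w(B(x,r))}{w(B(x,2r))}\bigl(\PhiPi(x,r)+1\bigr)
 \le 4C_M\bigl(\PhiPi(x,r)+1\bigr),
\]
because $w(B(x,r))\le w(B(x,2r))$. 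Adding $1$ to both sides, we obtain
\[
 \PhiPi(x,2r)+1\le C_1\bigl(\PhiPi(x,r)+1\bigr),
 \qquad C_1:=4C_M+1 .
\]
This bound is uniform in $x$ and $r$.

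The second step is to iterate. For $t\ge1$, choose the integer $k\ge0$ with $2^k\le t<2^{k+1}$. Applying the one-step bound $k$ times gives
\[
 \PhiPi(x,2^kr)+1\le C_1^k\bigl(\PhiPi(x,r)+1\bigr)\le t^{\log_2C_1}\bigl(\PhiPi(x,r)+1\bigr).
\]
Note that $C_1\ge 1$, so the exponent $\log_2 C_1$ is nonnegative.

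The third step handles the leftover factor $s=t/2^k\in[1,2)$. We pass from radius $2^kr$ to radius $tr$ by monotonicity of $\pi$ and doubling of $w$:
\[
 \PhiPi(x,tr)=\frac{(tr)^2\pi(B(x,tr))}{w(B(x,tr))}
 \le \frac{4(2^kr)^2\pi(B(x,2^{k+1}r))}{w(B(x,2^kr))}.
\]
Since $w(B(x,2^{k+1}r))\le C_D\,w(B(x,2^kr))$, the right-hand side is at most $C_D\,\PhiPi(x,2^{k+1}r)$. Adding $1$ and applying the one-step bound once more yields
\[
 \PhiPi(x,tr)+1\le C\,t^{\kappa}\bigl(\PhiPi(x,r)+1\bigr),
 \qquad \kappa:=\max\{\log_2 C_1,1\}.
\]
Taking $\kappa$ at least $1$ keeps it strictly positive, as the statement requires.

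There is no real obstacle. The only point needing care is that \eqref{eq:controlled-enlargement} is additive, so it has to be applied to $\PhiPi+1$ and not to $\PhiPi$ alone. The non-dyadic factor $t$ is then absorbed into the constant using monotonicity of $\pi$ and doubling of $w$.
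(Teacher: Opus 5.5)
Your proof is correct and follows the paper's argument. Both derive a one-step bound $\PhiPi(x,2r)\le C\bigl(\PhiPi(x,r)+1\bigr)$ from controlled enlargement, iterate it over dyadic scales, and handle non-dyadic $t$ by placing it between consecutive powers of two. The paper cites reverse doubling for the one-step bound, whereas you correctly note that the monotonicity $w(B(x,r))\le w(B(x,2r))$ already suffices.
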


\begin{proof}
By \eqref{eq:controlled-enlargement} and \eqref{eq:weighted-reverse-doubling},
\[
 \PhiPi(x,2r)
 \le C\bigl(\PhiPi(x,r)+1\bigr).
\]
Iteration gives the claim for dyadic $t$. The general case follows by placing $t$ between two consecutive dyadic numbers.
\end{proof}

Choose a threshold $A\ge A_*$, where $A_*$ is sufficiently large in terms of the constants in \eqref{eq:weighted-reverse-doubling} and \eqref{eq:controlled-enlargement}, and define
\begin{equation}
 \rho_A(x)
 :=\sup\{r>0:\PhiPi(x,r)\le A\},
 \qquad m_A(x):=\rho_A(x)^{-1}.
 \label{eq:critical-radius-definition}
\end{equation}
The set in \eqref{eq:critical-radius-definition} need not be an interval; none of the arguments below assumes monotonicity of $r\mapsto\PhiPi(x,r)$.

We first establish that the critical radius is finite and normalized.

\begin{lemma}
\label{lem:critical-normalization}
For every $x\in\R^d$,
\begin{equation}
 0<\rho_A(x)<\infty.
 \label{eq:rho-positive-finite}
\end{equation}
Moreover, there are constants $0<c_A\le A$ such that
\begin{equation}
 c_A\le \PhiPi(x,\rho_A(x))\le A
 \label{eq:critical-Phi-normalization}
\end{equation}
for all $x$. In particular,
\begin{equation}
 \pi(B(x,\rho_A(x)))
 \simeq \frac{w(B(x,\rho_A(x)))}{\rho_A(x)^2}.
 \label{eq:critical-ball-normalization}
\end{equation}
\end{lemma}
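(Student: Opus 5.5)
The plan is to derive all three assertions directly from the scale-decay condition \eqref{eq:global-scale-decay}, the polynomial control of \cref{lem:Phi-polynomial-growth}, and continuity from below of measures on increasing open balls. Throughout, $x$ is fixed. I will not use monotonicity of $r\mapsto\PhiPi(x,r)$.

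\emph{Positivity of $\rho_A(x)$.} Fix any $R>0$. Since $\pi$ is Radon, $\pi(B(x,R))<\infty$, so $\PhiPi(x,R)<\infty$. Applying \eqref{eq:global-scale-decay} with this $R$ gives $\PhiPi(x,r)\le C_0(r/R)^\delta\PhiPi(x,R)\to0$ as $r\downarrow0$. Hence $\PhiPi(x,r)\le A$ for all sufficiently small $r$, so the set in \eqref{eq:critical-radius-definition} is nonempty and $\rho_A(x)>0$.

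\emph{Finiteness of $\rho_A(x)$.} Since $\pi\not\equiv0$, there is a ball $B(z,s)$ with $\pi(B(z,s))>0$. Choose $r_0>|x-z|+s$; then $B(x,r_0)\supset B(z,s)$, so $\PhiPi(x,r_0)>0$. Now read \eqref{eq:global-scale-decay} in reverse with $r=r_0<R$:
\[
 \PhiPi(x,R)\ge C_0^{-1}(R/r_0)^\delta\PhiPi(x,r_0)\longrightarrow\infty
 \quad\text{as }R\to\infty.
\]
So $\PhiPi(x,R)>A$ for all large $R$, the admissible set is bounded, and $\rho_A(x)<\infty$.

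\emph{Upper bound in \eqref{eq:critical-Phi-normalization}.} Write $\rho=\rho_A(x)$. By definition of the supremum there are admissible radii $r_n\uparrow\rho$ with $\PhiPi(x,r_n)\le A$; if $\rho$ itself is admissible there is nothing to prove. Open balls increase to $B(x,\rho)$, so continuity from below gives $\pi(B(x,r_n))\to\pi(B(x,\rho))$ and $w(B(x,r_n))\to w(B(x,\rho))\in(0,\infty)$. Hence $\PhiPi(x,\rho)=\lim_n\PhiPi(x,r_n)\le A$.

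\emph{Lower bound in \eqref{eq:critical-Phi-normalization}.} This is the step that fixes how large $A_*$ must be. Since $2\rho>\rho$, the radius $2\rho$ is not admissible, so $\PhiPi(x,2\rho)>A$. \Cref{lem:Phi-polynomial-growth} with $t=2$ gives
\[
 A<\PhiPi(x,2\rho)\le C2^\kappa\bigl(\PhiPi(x,\rho)+1\bigr),
\]
and therefore $\PhiPi(x,\rho)\ge A/(C2^\kappa)-1$. Choosing $A_*\ge 2C2^\kappa$ makes this at least $c_A:=A/(2C2^\kappa)>0$. The constant $c_A$ depends only on $A$ and the structural constants, not on $x$.

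\emph{Conclusion.} Combining the two bounds, $c_A\le\PhiPi(x,\rho_A(x))\le A$, which is \eqref{eq:critical-Phi-normalization}. Unwinding the definition of $\PhiPi$ turns this into \eqref{eq:critical-ball-normalization}.

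The only delicate points are the possible non-monotonicity of $\PhiPi(x,\cdot)$, handled by the limiting argument along admissible radii, and the need to use scale decay in both directions. Neither step requires more than elementary measure continuity.
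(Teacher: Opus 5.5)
Your proof is correct and follows the paper's argument essentially step for step. You use scale decay in both directions for $0<\rho_A(x)<\infty$, continuity from below along admissible radii for the upper bound, and non-admissibility of $2\rho_A(x)$ together with the one-step bound $\PhiPi(x,2r)\le C(\PhiPi(x,r)+1)$ for the lower bound (you cite \cref{lem:Phi-polynomial-growth} at $t=2$ rather than the doubling step in its proof, and you rightly note that $\PhiPi(x,r)\to0$ needs only local finiteness of $\pi$).
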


\begin{proof}
Fix $x$. Since $\pi\not\equiv0$, a sufficiently large ball centered at $x$ has positive $\pi$-measure. Applying \eqref{eq:global-scale-decay} below such a scale shows that $\PhiPi(x,r)\to0$ as $r\downarrow0$. Reversing \eqref{eq:global-scale-decay} above a scale at which $\PhiPi$ is positive shows that $\PhiPi(x,R)\to\infty$ as $R\to\infty$. This proves \eqref{eq:rho-positive-finite}.

Let $\rho=\rho_A(x)$. By the definition of a supremum, there are $r_k\uparrow\rho$ with $\PhiPi(x,r_k)\le A$. Since open balls increase to $B(x,\rho)$, continuity from below of $\pi$ and $w\,dx$ gives $\PhiPi(x,\rho)\le A$. On the other hand, $2\rho>\rho_A(x)$, hence $\PhiPi(x,2\rho)>A$. As in the proof of \cref{lem:Phi-polynomial-growth},
\[
 \PhiPi(x,2\rho)\le C_*\bigl(\PhiPi(x,\rho)+1\bigr).
\]
Thus
\[
 \PhiPi(x,\rho)>A/C_*-1.
\]
Choosing $A_*$ so that the last quantity is positive gives \eqref{eq:critical-Phi-normalization}.
\end{proof}

The critical radius satisfies a global admissibility estimate.

\begin{lemma}
There exist $a,b>0$ and $C_\rho\ge1$, depending only on the fixed threshold $A$ and the structural constants in \eqref{eq:A2-RD-assumption}--\eqref{eq:controlled-enlargement}, such that, for all $x,y\in\R^d$,
\begin{equation}
 C_\rho^{-1}\rho_A(x)
 \left(1+\frac{|x-y|}{\rho_A(x)}\right)^{-a}
 \le \rho_A(y)
 \le C_\rho\rho_A(x)
 \left(1+\frac{|x-y|}{\rho_A(x)}\right)^b.
 \label{eq:global-rho-comparison}
\end{equation}
Equivalently,
\begin{equation}
 c\,m_A(x)\bigl(1+|x-y|m_A(x)\bigr)^{-b}
 \le m_A(y)
 \le C\,m_A(x)\bigl(1+|x-y|m_A(x)\bigr)^a.
 \label{eq:global-m-comparison}
\end{equation}
\end{lemma}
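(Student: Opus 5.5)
The plan is to prove \eqref{eq:global-rho-comparison} by the standard Shen-type argument, using only \eqref{eq:critical-Phi-normalization}, \cref{lem:Phi-polynomial-growth}, the scale decay \eqref{eq:global-scale-decay}, and doubling/reverse doubling of $w$. Fix $x,y$ and write $\rho=\rho_A(x)$, $D=|x-y|$, $t=1+D/\rho$. The key device compares balls centered at $y$ with balls centered at $x$ by inclusion: $B(x,s)\subset B(y,s+D)$ and $B(y,s)\subset B(x,s+D)$. Together with \eqref{eq:weighted-upper-dimension} this gives $w(B(y,s+D))\le C(1+D/s)^{Q_w}w(B(x,s))$, and similarly with the roles of $x$ and $y$ exchanged.

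\emph{Upper bound for $\rho_A(y)$.} First I would show that $\PhiPi(y,r)>A$ once $r\ge C\rho t^{b}$; by definition of the supremum this gives $\rho_A(y)\le C\rho t^b$. Take $r\ge 2D+\rho$. Then $B(y,r)\supset B(x,r/2)$, so
\[
\PhiPi(y,r)\ge \frac{r^2\pi(B(x,r/2))}{w(B(y,r))}\gtrsim \PhiPi(x,r/2),
\]
because $w(B(y,r))\le w(B(x,2r))\lesssim w(B(x,r/2))$. Reversing the scale decay from $\rho$ up to $r/2$ gives
\[
\PhiPi(x,r/2)\ge C_0^{-1}\Bigl(\frac{r}{2\rho}\Bigr)^{\delta}\PhiPi(x,\rho)\ge C_0^{-1}c_A\Bigl(\frac r{2\rho}\Bigr)^\delta .
\]
Hence $\PhiPi(y,r)>A$ for all $r\ge C(\rho+D)$, so $\rho_A(y)\le C\rho\,t$, i.e.\ $b=1$. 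The supremum definition needs the inequality to hold for \emph{all} large $r$, and it does, since the bound is uniform in $r\ge C(\rho+D)$.

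\emph{Lower bound for $\rho_A(y)$.} The aim is $\PhiPi(y,s)\le A$ for some $s\ge c\rho t^{-a}$. First bound $\PhiPi(y,\rho+D)$ from above. Since $B(y,\rho+D)\subset B(x,\rho+2D)\subset B(x,2t\rho)$,
\[
\PhiPi(y,\rho+D)\le C\,\PhiPi(x,2t\rho)\le C t^{\kappa}(A+1)
\]
by \cref{lem:Phi-polynomial-growth}. The volume ratio $w(B(x,2t\rho))/w(B(y,\rho+D))$ is bounded, since each ball is contained in a comparable dilate of the other. Next apply the scale decay \eqref{eq:global-scale-decay} at $y$ from $R=\rho+D$ down to $s$:
\[
\PhiPi(y,s)\le C_0\Bigl(\frac{s}{\rho t}\Bigr)^\delta C t^\kappa(A+1).
\]
This is $\le A$ as soon as $s=c\rho\,t^{1-\kappa/\delta}$ with $c$ small. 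Hence $\rho_A(y)\ge s$, giving $a=\max\{\kappa/\delta-1,0\}$ up to constants. The equivalent form \eqref{eq:global-m-comparison} then follows by taking reciprocals, after checking that $1+|x-y|m_A(x)$ can be exchanged with $1+|x-y|m_A(y)$ up to powers. Alternatively, it is immediate in the stated form, where only $m_A(x)$ appears.

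\emph{Expected obstacle.} The main difficulty is bookkeeping rather than ideas. Non-monotonicity of $r\mapsto\PhiPi$ forces every step to be uniform in $r$ above or below a threshold. In addition, the constant $A_*$ must be large enough that $c_A>0$ but independent of $x$, and the volume comparisons between off-center balls must avoid reverse doubling in the wrong direction. I would double-check the lower-bound step most carefully, since it combines growth from \cref{lem:Phi-polynomial-growth} with decay, and the exponent $a$ depends on $\kappa/\delta$.
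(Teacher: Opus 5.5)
Your proof is correct. Its lower bound follows the same route as the paper: bound $\PhiPi(y,\cdot)$ at a scale comparable to $\rho t$ using ball inclusion and \cref{lem:Phi-polynomial-growth}, then run the scale decay \eqref{eq:global-scale-decay} downward at the centre $y$. You also observe that the volume ratio $w(B(x,2t\rho))/w(B(y,\rho+D))$ is bounded. This gives growth exponent $\kappa$ where the paper carries a cruder factor $T^{2+Q_w}$, and it only changes the value of $a$.

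The upper bound is arranged differently. The paper transfers from $x$ to $y$ at the single scale $S=2Tr$. It uses $B(x,r)\subset B(y,S)$ to get $\PhiPi(y,S)\ge cT^{2-Q_w}\PhiPi(x,r)$, which pairs the $\pi$-mass of a small ball with the volume of a large one and so loses $T^{2-Q_w}$. That loss is recovered by two reversed applications of \eqref{eq:global-scale-decay} at $y$: first up to $R_y=LST^{(Q_w-2)/\delta}$, then beyond. The result is $b=1+(Q_w-2)/\delta$.

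You instead apply the reversed decay at the centre $x$, from $\rho$ up to $r/2$. You transfer to $y$ only at radii $r\ge 2D+\rho$, where $B(x,r/2)\subset B(y,r)\subset B(x,2r)$ makes the volume ratio a doubling constant. This yields the sharper exponent $b=1$. Because the estimate is proved separately for each such $r$, it also handles the non-monotonicity of $r\mapsto\PhiPi(y,r)$ in one step rather than two.

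Two cosmetic points:
\begin{itemize}
\item The lemma asks for $a>0$. When $\kappa\le\delta$, take any positive $a$; your bound already gives $\rho_A(y)\ge c\rho\ge c\rho t^{-a}$.
\item \eqref{eq:global-m-comparison} follows directly by taking reciprocals, since $1+|x-y|/\rho_A(x)=1+|x-y|m_A(x)$. No exchange with $m_A(y)$ is needed.
\end{itemize}
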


\begin{proof}
Write $r=\rho_A(x)$ and
\[
 T:=1+\frac{|x-y|}{r}\ge1,
 \qquad S:=2Tr.
\]
Then
\[
 B(x,r)\subset B(y,S)\subset B(x,3Tr).
\]
By \cref{lem:Phi-polynomial-growth}, doubling, and \eqref{eq:critical-Phi-normalization}, there is a structural exponent $M>0$ such that
\begin{equation}
 \PhiPi(y,S)\le CT^M.
 \label{eq:cross-centre-Phi-upper}
\end{equation}
Indeed,
\[
 \PhiPi(y,S)
 \le \frac{S^2\pi(B(x,3Tr))}{w(B(y,S))}
 \le C T^{2+Q_w}\PhiPi(x,3Tr)
 \le CT^M,
\]
where doubling compares $w(B(y,S))$ with $w(B(x,r))$ and \cref{lem:Phi-polynomial-growth} controls $\PhiPi(x,3Tr)$.

The constants in the two comparison estimates are chosen sequentially. First choose $a>0$ so that $(a+1)\delta>M$; next choose the small constant $c$ in the lower-bound argument; independently, the large constant $L$ will be chosen in the upper-bound argument. Put $s=crT^{-a}$. Since $s<S$, \eqref{eq:global-scale-decay} and \eqref{eq:cross-centre-Phi-upper} give
\[
 \PhiPi(y,s)
 \le C_0(s/S)^\delta\PhiPi(y,S)
 \le Cc^\delta T^{M-(a+1)\delta}.
\]
Choosing $c$ sufficiently small makes the last expression at most $A$. Thus $s$ is admissible in \eqref{eq:critical-radius-definition}, and
\[
 \rho_A(y)\ge crT^{-a}.
\]

For the reverse estimate, the same ball inclusions and \eqref{eq:weighted-upper-dimension} yield
\begin{equation}
 \PhiPi(y,S)
 \ge cT^{2-Q_w}\PhiPi(x,r)
 \ge c_AT^{2-Q_w}.
 \label{eq:cross-centre-Phi-lower}
\end{equation}
Let
\[
 R_y:=LST^{(Q_w-2)/\delta}.
\]
Reversing \eqref{eq:global-scale-decay} between $S$ and $R_y$ and using \eqref{eq:cross-centre-Phi-lower}, we obtain
\[
 \PhiPi(y,R_y)
 \ge C_0^{-1}(R_y/S)^\delta\PhiPi(y,S)
 \ge c_AC_0^{-1}L^\delta.
\]
Choose $L$ so large that the right-hand side is greater than $C_0A$. For every $R>R_y$, another reversed use of \eqref{eq:global-scale-decay} gives $\PhiPi(y,R)>A$. Hence no radius greater than $R_y$ is admissible, and
\[
 \rho_A(y)\le R_y
 \le C rT^{1+(Q_w-2)/\delta}.
\]
This proves \eqref{eq:global-rho-comparison} with $b=1+(Q_w-2)/\delta$. Taking reciprocals gives \eqref{eq:global-m-comparison}.
\end{proof}

The global estimate has the following local consequence.

\begin{corollary}
\label{cor:local-critical-comparison}
For every fixed $L\ge1$ there exists $C_L\ge1$ such that
\begin{equation*}
 C_L^{-1}\rho_A(x)\le\rho_A(y)\le C_L\rho_A(x)
\end{equation*}
whenever $|x-y|\le L\rho_A(x)$.
\end{corollary}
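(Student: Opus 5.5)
This follows directly from the global comparison \eqref{eq:global-rho-comparison}. Fix $L\ge1$ and points $x,y$ with $|x-y|\le L\rho_A(x)$. The quantity
\[
 T=1+\frac{|x-y|}{\rho_A(x)}
\]
then satisfies $1\le T\le 1+L$. Both factors $T^{-a}$ and $T^{b}$ in \eqref{eq:global-rho-comparison} are therefore confined to the interval $[(1+L)^{-a},(1+L)^{b}]$. Substituting these bounds gives
\[
 C_\rho^{-1}(1+L)^{-a}\rho_A(x)\le\rho_A(y)\le C_\rho(1+L)^{b}\rho_A(x).
\]
So the statement holds with
\[
 C_L:=C_\rho(1+L)^{\max\{a,b\}}.
\]
This constant depends only on $L$, the fixed threshold $A$, and the structural constants.

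There is no real obstacle. The only points to check are that the exponents $a,b$ and the constant $C_\rho$ from the preceding lemma do not depend on $x$ or $y$, and that the hypothesis is stated relative to $\rho_A(x)$, which is exactly the normalization used in \eqref{eq:global-rho-comparison}.

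If a symmetric version is needed later, with the condition $|x-y|\le L\rho_A(y)$ instead, it can be obtained by swapping the roles of $x$ and $y$. This changes the constant but not the argument.
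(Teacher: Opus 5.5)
Your proof is correct and is the intended argument. The paper gives no separate proof and presents this corollary as an immediate consequence of \eqref{eq:global-rho-comparison}. Bounding $T=1+|x-y|/\rho_A(x)\le 1+L$ in both factors, as you do, yields $C_L=C_\rho(1+L)^{\max\{a,b\}}$.
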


Different normalized thresholds produce equivalent critical radii.

\begin{lemma}
\label{lem:threshold-equivalence}
Let $A_1,A_2>0$. Assume that both critical radii $\rho_{A_i}$ are finite and that there are constants $c_i>0$ such that
\begin{equation}
 \PhiPi(x,\rho_{A_i}(x))\ge c_i,
 \qquad x\in\R^d,\quad i=1,2.
 \label{eq:threshold-lower-normalization}
\end{equation}
Then
\begin{equation*}
 \rho_{A_1}(x)\simeq\rho_{A_2}(x)
\end{equation*}
uniformly in $x$, with constants depending on $A_1,A_2,c_1,c_2$ and the structural data.
\end{lemma}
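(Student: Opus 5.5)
Fix $x$ and write $\rho_i:=\rho_{A_i}(x)$. By symmetry in the indices it suffices to show $\rho_2\le C\rho_1$, where $C$ depends only on $A_1,A_2,c_1,c_2$ and the structural data. The two ingredients are the upper bound $\PhiPi(x,\rho_i)\le A_i$, which holds by the continuity-from-below argument in the proof of \cref{lem:critical-normalization}, and the assumed lower normalization \eqref{eq:threshold-lower-normalization}. We compare the two radii through the scale-decay hypothesis \eqref{eq:global-scale-decay}.

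\emph{Case $\rho_2\ge\rho_1$.} Apply \eqref{eq:global-scale-decay} with $r=\rho_1$ and $R=\rho_2$:
\[
 c_1\le\PhiPi(x,\rho_1)\le C_0\left(\frac{\rho_1}{\rho_2}\right)^\delta\PhiPi(x,\rho_2)\le C_0A_2\left(\frac{\rho_1}{\rho_2}\right)^\delta .
\]
Rearranging gives
\[
 \rho_2\le\left(\frac{C_0A_2}{c_1}\right)^{1/\delta}\rho_1 .
\]

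\emph{Case $\rho_2<\rho_1$.} Here $\rho_2\le\rho_1$ holds trivially.

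Exchanging the roles of $A_1$ and $A_2$ gives $\rho_1\le(C_0A_1/c_2)^{1/\delta}\rho_2$, which completes the two-sided comparison. The only step needing care is to use the upper value $\PhiPi(x,\rho_{A_i})\le A_i$ at the supremum itself and not at a nearby admissible radius. This is secured by continuity from below of $\pi$ and $w\,dx$ along $r_k\uparrow\rho_{A_i}$, as in the proof of \cref{lem:critical-normalization}, and that argument does not use monotonicity of $r\mapsto\PhiPi(x,r)$.

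Finally, for fixed admissible thresholds $A_1,A_2\ge A_*$, the hypotheses of the lemma follow from \cref{lem:critical-normalization}, with $c_i=c_{A_i}$.
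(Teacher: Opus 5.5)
Your proof is correct, but it takes a slightly different route from the paper's.

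\textbf{The paper's route.} It assumes $A_1\le A_2$ and gets $\rho_{A_1}\le\rho_{A_2}$ for free, since the admissible set in \eqref{eq:critical-radius-definition} for $A_1$ is contained in that for $A_2$. It then uses only the lower normalization at the smaller threshold. Reversing \eqref{eq:global-scale-decay} upward from $r=\rho_{A_1}(x)$ shows $\PhiPi(x,R)>A_2$ for every $R>Lr$. Hence no radius beyond $Lr$ is admissible, and $\rho_{A_2}(x)\le Lr$.

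\textbf{Your route.} You treat the two indices symmetrically and apply the forward scale decay once, directly between the two critical radii. This needs two things the paper avoids. First, it needs the upper value $\PhiPi(x,\rho_{A_i}(x))\le A_i$ at the supremum itself, which is why the continuity-from-below step you flag is required. Second, it needs both lower normalizations $c_1$ and $c_2$.

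\textbf{Comparison.} Your argument gives a one-step comparison with the explicit constant $(C_0A_2/c_1)^{1/\delta}$. It also avoids excluding a whole half-line of radii, which the paper must do because $r\mapsto\PhiPi(x,r)$ is not monotone. The paper's argument never evaluates $\PhiPi$ at a supremum. It also only uses the lower normalization attached to the smaller of the two thresholds.
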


\begin{proof}
Assume $A_1\le A_2$. Then $\rho_{A_1}\le\rho_{A_2}$. Put $r=\rho_{A_1}(x)$. By \eqref{eq:threshold-lower-normalization}, $\PhiPi(x,r)\ge c_1$. Choose $L>1$ so large that
\[
 C_0^{-1}L^\delta c_1>C_0A_2.
\]
The reversed scale inequality gives $\PhiPi(x,Lr)>C_0A_2$, and hence $\PhiPi(x,R)>A_2$ for all $R>Lr$. Thus $\rho_{A_2}(x)\le Lr$. Interchanging the thresholds when necessary proves the two-sided comparison.
\end{proof}

\begin{remark}
For every $A\ge A_*$, the lower normalization required in \eqref{eq:threshold-lower-normalization} follows from \cref{lem:critical-normalization}. The critical radius used in \cite{BuiDoTrong2021} has the same lower normalization by \cite[Proposition~2.1]{BuiDoTrong2021}, so \cref{lem:threshold-equivalence} also compares that radius with $\rho_A$.
\end{remark}

\subsection{Critical covers and the Euclidean verification theorem}

A maximal disjoint-family argument gives the required critical covering.

\begin{proposition}
\label{prop:critical-covering}
There is a sequence $\{x_j\}_{j\in\N}\subset\R^d$ such that, with
\begin{equation*}
 r_j:=\rho_A(x_j),
 \qquad B_j:=B(x_j,r_j),
\end{equation*}
one has
\begin{equation}
 \R^d=\bigcup_jB_j
 \label{eq:critical-covering}
\end{equation}
and, for every fixed $\Lambda\ge1$,
\begin{equation}
 \sum_j\one_{\Lambda B_j}\le N_\Lambda
 \quad\hbox{pointwise on }\R^d.
 \label{eq:critical-bounded-overlap}
\end{equation}
Furthermore,
\begin{equation}
 m_A(x)\simeq r_j^{-1},
 \qquad x\in B_j.
 \label{eq:critical-m-comparison}
\end{equation}
\end{proposition}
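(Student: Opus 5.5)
The plan is the standard Shen-type maximal disjoint family, driven by \cref{cor:local-critical-comparison} and \cref{lem:critical-normalization}. By \eqref{eq:rho-positive-finite}, every ball $B(x,\rho_A(x)/5)$ is a nondegenerate open ball.

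\emph{Selection.} We choose a maximal family $\{x_j\}$ such that the balls $B(x_j,\rho_A(x_j)/5)$ are pairwise disjoint. Existence follows from Zorn's lemma, or from a greedy selection over an exhaustion by balls $B(0,2^k)$. Countability is automatic, since disjoint open sets in $\R^d$ each contain a rational point.

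\emph{Covering.} Let $x\in\R^d$. By maximality, $B(x,\rho_A(x)/5)$ meets some $B(x_j,r_j/5)$. The crude bound $|x-x_j|<(\rho_A(x)+r_j)/5$ is not enough to close the argument directly. Instead we argue as follows: if $r_j\le\rho_A(x)$, then $|x-x_j|<\tfrac25\rho_A(x)$, and \cref{cor:local-critical-comparison} with $L=1$ gives $r_j\ge C_1^{-1}\rho_A(x)$. This does not yet give $x\in B_j$. The clean fix is to shrink the selection balls to $B(x_j,\epsilon r_j)$ with $\epsilon$ chosen structurally. If the two balls meet, then, using \cref{cor:local-critical-comparison} with $L=2$ in whichever direction applies, we get
\[
 |x-x_j|<\epsilon(\rho_A(x)+r_j)\le\epsilon(C_2+1)r_j.
\]
This is below $r_j$ once $\epsilon<(C_2+1)^{-1}$, so $x\in B_j$. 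This proves \eqref{eq:critical-covering} pointwise.

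\emph{Multiplier comparison.} For $x\in B_j$ we have $|x-x_j|<r_j=\rho_A(x_j)$, so \cref{cor:local-critical-comparison} yields $\rho_A(x)\simeq r_j$. This is exactly \eqref{eq:critical-m-comparison}.

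\emph{Bounded overlap.} This is the main point to check carefully. Fix $\Lambda\ge1$ and $x$, and let $J=\{j:x\in\Lambda B_j\}$. For $j\in J$, $|x-x_j|<\Lambda\rho_A(x_j)$. The comparison must be run from the centre $x_j$, and gives $\rho_A(x)\simeq_\Lambda r_j$ by \cref{cor:local-critical-comparison} with $L=\Lambda$. Hence all $r_j$, $j\in J$, are comparable to $r:=\rho_A(x)$. The disjoint balls $B(x_j,\epsilon r_j)$ then all have radius $\ge c_\Lambda r$ and lie in $B(x,C_\Lambda r)$.

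A volume count now bounds $\#J$. Lebesgue measure suffices, or one can use doubling of $w$: each $w(B(x_j,\epsilon r_j))\gtrsim_\Lambda w(B(x,C_\Lambda r))$ by doubling, since $B(x,C_\Lambda r)\subset B(x_j,C'_\Lambda r_j)$. Disjointness therefore gives $\#J\le N_\Lambda$, which is \eqref{eq:critical-bounded-overlap}, pointwise as required.

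The only delicate step is the choice of $\epsilon$ in the covering argument. It must be fixed after the constant of \cref{cor:local-critical-comparison} is known, and since that corollary is uniform, nothing circular arises.
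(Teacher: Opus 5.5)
Your proposal is correct and follows essentially the same route as the paper: a maximal disjoint family of shrunken critical balls $B(x_j,\epsilon r_j)$ with $\epsilon$ fixed via \cref{cor:local-critical-comparison}, a two-case covering argument, and a disjointness and volume count for the overlap bound. The only cosmetic difference is in the overlap step, where the paper compares $r_i$ with $r_j$ pairwise. You instead compare each $r_j$ with $\rho_A(x)$ at the common point, using the corollary centred at $x_j$ with $L=\Lambda$. This is equally valid.
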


\begin{proof}
By \cref{cor:local-critical-comparison}, choose $\eta\in(0,1/10)$ sufficiently small and select a maximal pairwise disjoint family
\[
 \{B(x_j,\eta\rho_A(x_j))\}_j.
\]
The family is countable. If $x$ is not one of the selected centers, maximality gives an index $j$ for which
\[
 B(x,\eta\rho_A(x))\cap B(x_j,\eta r_j)\ne\varnothing.
\]
If $\rho_A(x)\le r_j$, then $|x-x_j|<2\eta r_j<r_j$. If $\rho_A(x)>r_j$, then $|x-x_j|<2\eta\rho_A(x)<\rho_A(x)$ and local comparability gives $\rho_A(x)\lesssim r_j$; decreasing $\eta$ once more gives $|x-x_j|<r_j$. Thus $x\in B_j$, proving \eqref{eq:critical-covering}.

Suppose $z\in\Lambda B_i\cap\Lambda B_j$ and $r_i\ge r_j$. Then
\[
 |x_i-x_j|\le\Lambda(r_i+r_j)\le2\Lambda r_i.
\]
By \cref{cor:local-critical-comparison}, $r_j\simeq_{\Lambda}r_i$. Hence all reduced balls $B(x_j,\eta r_j)$ whose $\Lambda$-dilates contain a fixed point have comparable radii and lie in a Euclidean ball of comparable radius. Since the reduced balls are disjoint, geometric doubling gives a uniform bound on their number; equivalently, one may invoke the standard maximal-disjoint-family overlap lemma, cf. \cite[Lemma~2.12]{BjornKalamajska2022}. This proves \eqref{eq:critical-bounded-overlap}. Finally, \eqref{eq:critical-m-comparison} follows from local comparability.
\end{proof}

The scale-decay hypothesis yields uniform growth on the selected balls.

\begin{lemma}
\label{lem:growth-critical-balls}
Uniformly in $j$,
\begin{equation}
 \pi(B(y,s))
 \le C\left(\frac{s}{r_j}\right)^\delta
 \frac{w(B(y,s))}{s^2},
 \qquad y\in B_j,\quad0<s\le r_j.
 \label{eq:growth-on-critical-balls}
\end{equation}
\end{lemma}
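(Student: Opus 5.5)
The plan is to combine the scale-decay hypothesis \eqref{eq:global-scale-decay}, applied at center $y$, with the critical normalization of \cref{lem:critical-normalization} transported from $x_j$ to $y$ by local comparability. Fix $j$, $y\in B_j$ and $0<s\le r_j$. Rewriting \eqref{eq:growth-on-critical-balls} in terms of $\PhiPi$, it is equivalent to
\[
 \PhiPi(y,s)\le C\left(\frac{s}{r_j}\right)^\delta .
\]
So it suffices to bound $\PhiPi(y,r_j)$ by a structural constant and then invoke \eqref{eq:global-scale-decay} between the radii $s$ and $r_j$. The case $s=r_j$ is immediate; for $s<r_j$ we get
\[
 \PhiPi(y,s)\le C_0\left(\frac s{r_j}\right)^\delta\PhiPi(y,r_j).
\]

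To control $\PhiPi(y,r_j)$, I will use $|y-x_j|<r_j$ and the inclusions
\[
 B(y,r_j)\subset B(x_j,2r_j),\qquad B(x_j,r_j)\subset B(y,2r_j).
\]
The second inclusion together with doubling of $w$ gives $w(B(y,r_j))\gtrsim w(B(x_j,r_j))$. Hence
\[
 \PhiPi(y,r_j)\le \frac{r_j^2\pi(B(x_j,2r_j))}{w(B(y,r_j))}\lesssim \PhiPi(x_j,2r_j).
\]
By \cref{lem:Phi-polynomial-growth} with $t=2$ and \eqref{eq:critical-Phi-normalization}, this gives
\[
 \PhiPi(x_j,2r_j)\le C\bigl(\PhiPi(x_j,r_j)+1\bigr)\le C(A+1).
\]
Note that $r_j=\rho_A(x_j)$, and \eqref{eq:critical-Phi-normalization} gives $\PhiPi(x_j,r_j)\le A$ because the supremum is attained by continuity from below. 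As an alternative route, \cref{cor:local-critical-comparison} gives $\rho_A(y)\simeq r_j$, so one could instead compare with $\PhiPi(y,\rho_A(y))\le A$. That route, however, would require going upward from $\rho_A(y)$ to $r_j$, which costs another application of \cref{lem:Phi-polynomial-growth}; the direct doubling argument above avoids this.

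Combining the two steps yields \eqref{eq:growth-on-critical-balls} with $C=C\,C_0(A+1)$, which is uniform in $j$. The only step needing care is the bound on $\PhiPi(y,r_j)$ at an off-center point. Since $\PhiPi(y,\cdot)$ need not be monotone, one cannot simply argue "$r_j$ is below the critical radius at $y$". The enlargement condition \eqref{eq:controlled-enlargement}, packaged in \cref{lem:Phi-polynomial-growth}, is exactly what handles the passage from $B(x_j,r_j)$ to $B(x_j,2r_j)$, so I expect no real obstacle there.
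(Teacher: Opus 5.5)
Your proposal is correct and follows the paper's proof: the same inclusions $B(y,r_j)\subset B(x_j,2r_j)$ and $B(x_j,r_j)\subset B(y,2r_j)$, doubling plus controlled enlargement to get $\PhiPi(y,r_j)\le C$ from $\PhiPi(x_j,r_j)\le A$, then the scale-decay hypothesis at the center $y$ between $s$ and $r_j$. The only difference is cosmetic: you invoke controlled enlargement through \cref{lem:Phi-polynomial-growth} rather than directly, and you treat the endpoint $s=r_j$ explicitly.
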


\begin{proof}
Let $r=r_j$ and $y\in B_j$. Then $B(y,r)\subset B(x_j,2r)$ and $B(x_j,r)\subset B(y,2r)$. By \eqref{eq:controlled-enlargement}, doubling, and \eqref{eq:critical-ball-normalization},
\[
 \PhiPi(y,r)\le C.
\]
Applying \eqref{eq:global-scale-decay} at the center $y$ and the scales $s\le r$ proves \eqref{eq:growth-on-critical-balls}.
\end{proof}

\begin{theorem}[Euclidean verification]
\label{thm:euclidean-verification}
Under \eqref{eq:A2-RD-assumption}, \eqref{eq:global-scale-decay}, and \eqref{eq:controlled-enlargement}, the critical balls in \cref{prop:critical-covering}, together with $m_A$, form a normalized growth-generated cover in the sense of \cref{cor:growth-generated-cover}. The local and global estimates below hold for every $u\in W_{w,\loc}^{1,2}(\R^d)$, with all global terms interpreted in $[0,\infty]$. Consequently,
\begin{equation*}
 \pi(K)\le C\capwt(K;\Lambda_0B_j),
 \qquad K\subset B_j\text{ compact},
\end{equation*}
\begin{equation*}
 \int_{B_j}\int_{B_j}|u(x)-\widetilde u(y)|^2\,dw(x)\,d\pi(y)
 \le Cr_j^2\pi(B_j)\int_{\Lambda B_j}|\nabla u|^2\,dw,
\end{equation*}
and globally
\begin{equation}
 \int_{\R^d}m_A^2|u|^2\,dw
 \le C\left(\int_{\R^d}|\nabla u|^2\,dw+
 \int_{\R^d}|\widetilde u|^2\,d\pi\right),
 \label{eq:euclidean-fp-forward}
\end{equation}
\begin{equation}
 \int_{\R^d}|\widetilde u|^2\,d\pi
 \le C\left(\int_{\R^d}|\nabla u|^2\,dw+
 \int_{\R^d}m_A^2|u|^2\,dw\right).
 \label{eq:euclidean-fp-reverse}
\end{equation}
In particular, finiteness of the gradient term and either potential term implies finiteness of the other potential term.
\end{theorem}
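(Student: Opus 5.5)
The plan is to verify that the critical balls $B_j=B(x_j,\rho_A(x_j))$ of \cref{prop:critical-covering}, together with $m=m_A$, satisfy the hypotheses of \cref{cor:growth-generated-cover}. All conclusions then follow from \cref{thm:growth-capacity}, \cref{cor:growth-mixed} and \cref{thm:global-fp}.

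\textbf{Ambient setting.} Under \eqref{eq:A2-RD-assumption}, $(\R^d,|\cdot|,w\,dx)$ is an unbounded complete $2$-PI space. The measure $w\,dx$ is doubling, and the weighted Poincar\'e inequality of Fabes--Kenig--Serapioni holds. The Newtonian space coincides with $W^{1,2}_{w,\loc}$, and $|\nabla u|$ is comparable to $g_u$ (in particular it is an admissible $2$-weak upper gradient). Hence every abstract estimate may be rewritten with $|\nabla u|$ in place of $g_u$, and $\operatorname{cap}_{2,w}$ is the metric variational capacity.

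\textbf{Checking (C1)--(C4).} Conditions (C1) and (C2) are exactly \eqref{eq:critical-covering} and \eqref{eq:critical-bounded-overlap}, applied with the fixed final dilation $\Lambda$. Condition (C3) is \eqref{eq:critical-m-comparison}. Condition (C4) is the lower half of \eqref{eq:critical-ball-normalization} from \cref{lem:critical-normalization} at $x=x_j$, which gives $\pi(B_j)\ge c_A w(B_j)/r_j^2$.

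\textbf{Growth hypothesis and conclusion.} The growth hypothesis \eqref{eq:cover-growth} with $R=r_j$ is \cref{lem:growth-critical-balls}, and its constant is uniform in $j$. \cref{cor:growth-generated-cover} then makes $(\mathscr B,m_A)$ a normalized capacitary cover. Applying \cref{thm:growth-capacity} on each $B_j$ yields the capacity domination $\pi(K)\le C\operatorname{cap}_{2,w}(K;\Lambda_0B_j)$. The local mixed estimate comes from the local-estimates proposition of \cref{sec:normalized-covers}, using \cref{thm:general-mixed-estimate} with $A_{B_j}=C_{\mathrm{cap}}$ and the lower normalization. The global inequalities \eqref{eq:euclidean-fp-forward}--\eqref{eq:euclidean-fp-reverse} are \cref{thm:global-fp}, valid in $[0,\infty]$ for every $u\in N^{1,2}_{\loc}=W^{1,2}_{w,\loc}$. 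The final finiteness statement is the last sentence of \cref{thm:global-fp}.

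\textbf{Main obstacle.} The only delicate point is uniformity of the constants in $j$. The growth constant from \cref{lem:growth-critical-balls} depends only on $C_0$, $C_M$, $A$ and the doubling data. The overlap constant $N_\Lambda$ is fixed once $\Lambda$ is fixed through \eqref{eq:dilation-package}. So I would track that each constant depends only on structural data and on $A$, and otherwise the proof is pure assembly.
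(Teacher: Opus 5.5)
Your proposal is correct and follows the paper's proof: normalization from \cref{lem:critical-normalization}, growth on the critical balls from \cref{lem:growth-critical-balls}, covering, overlap and multiplier comparison from \cref{prop:critical-covering}, and then \cref{thm:growth-capacity} together with \cref{cor:growth-generated-cover}, which supplies the local and global estimates. Your remarks on identifying $N^{1,2}_{\loc}$ with $W^{1,2}_{w,\loc}$ (where only $g_u\le|\nabla u|$ is needed) and on the $j$-uniformity of the constants make explicit what the paper's one-line proof leaves implicit.
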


\begin{proof}
The normalization follows from \cref{lem:critical-normalization}, the local growth from \cref{lem:growth-critical-balls}, and the covering properties from \cref{prop:critical-covering}. Apply \cref{thm:growth-capacity,cor:growth-generated-cover}.
\end{proof}

\begin{remark}
The condition $\beta>2$ belongs to the generalized Schr\"odinger critical-radius framework. The abstract content--capacity argument itself uses only the complete $2$-PI structure and the positive gain $\delta>0$.
\end{remark}

\subsection{A finite-scale critical-cover package}

For the local application in \cref{sec:relation-earlier}, it is useful to record a finite-scale version. The explicit margin between the working scale and the range of the hypotheses prevents hidden use of information at larger radii.

\begin{proposition}
\label{prop:finite-scale-cover}
Fix $R_*>0$ and a prescribed dilation $\Lambda\ge1$. Let $w\in A_2(\R^d)$ and let $\pi$ be a positive Radon measure. Set
\[
 \PhiPi(x,r):=\frac{r^2\pi(B(x,r))}{w(B(x,r))}.
\]
Assume that
\begin{equation}
 \PhiPi(x,r)
 \le C_0\left(\frac rR\right)^\delta\PhiPi(x,R),
 \qquad 0<r<R\le R_*,
 \label{eq:finite-scale-decay-base}
\end{equation}
and
\begin{equation}
 \pi(B(x,2r))
 \le C_E\left[\pi(B(x,r))+\frac{w(B(x,r))}{r^2}\right],
 \qquad 0<r\le R_*/2.
 \label{eq:finite-scale-enlargement-base}
\end{equation}
Set
\begin{equation*}
 \delta_0:=\min\{\delta,2\}.
\end{equation*}
For $M>0$, define
\begin{equation}
 d\pi_M:=d\pi+MR_*^{-2}\,dw,
 \qquad
 \Phi_M(x,r):=\frac{r^2\pi_M(B(x,r))}{w(B(x,r))}
 =\PhiPi(x,r)+M\left(\frac r{R_*}\right)^2.
 \label{eq:finite-scale-augmented-measure}
\end{equation}
Set
\begin{equation*}
 C_2:=4\max\{C_E,C_D\},
 \qquad A_M\ge2C_2.
\end{equation*}
Put $L_0:=2\Lambda+2$. Choose $c_*\in(0,1)$ sufficiently small that
\begin{equation}
 (2L_0+2)c_*\le1,
 \qquad 2c_*\le1,
 \label{eq:finite-margin-choice}
\end{equation}
and set $r_*:=c_*R_*$. Define
\begin{equation*}
 \rho_{M,*}(x)
 :=\sup\{0<r\le r_*:\Phi_M(x,r)\le A_M\},
 \qquad m_{M,*}(x):=\rho_{M,*}(x)^{-1}.
\end{equation*}
Then the following conclusions hold.

\begin{enumerate}[label=\textup{(\roman*)}]
\item For every $x\in\R^d$,
\begin{equation}
 0<\rho_{M,*}(x)\le r_*,
 \qquad
 c_{M,*}\le\Phi_M(x,\rho_{M,*}(x))\le A_M,
 \label{eq:finite-critical-normalization}
\end{equation}
where $c_{M,*}>0$ is independent of $x$. In particular,
\begin{equation*}
 \pi_M(B(x,\rho_{M,*}(x)))
 \simeq\frac{w(B(x,\rho_{M,*}(x)))}{\rho_{M,*}(x)^2}.
\end{equation*}

\item For every fixed $L\in[1,L_0]$, there exists $C_L\ge1$ such that
\begin{equation}
 C_L^{-1}\rho_{M,*}(x)\le\rho_{M,*}(y)\le C_L\rho_{M,*}(x)
 \label{eq:finite-local-comparison}
\end{equation}
whenever $|x-y|\le L\rho_{M,*}(x)$.

\item There is a sequence $\{x_j\}_{j\in\N}$ such that, writing
\begin{equation*}
 r_j:=\rho_{M,*}(x_j),
 \qquad B_j:=B(x_j,r_j),
\end{equation*}
one has
\begin{equation}
 \R^d=\bigcup_jB_j,
 \qquad
 \sum_j\one_{\Lambda B_j}\le N_\Lambda
 \quad\text{pointwise},
 \label{eq:finite-cover-overlap}
\end{equation}
and
\begin{equation}
 m_{M,*}(x)\simeq r_j^{-1},
 \qquad x\in B_j.
 \label{eq:finite-m-comparison}
\end{equation}

\item Uniformly in $j$,
\begin{equation}
 \Phi_M(y,s)
 \le C\left(\frac{s}{r_j}\right)^{\delta_0},
 \qquad y\in B_j,\quad0<s\le r_j,
 \label{eq:finite-positive-gain-Phi}
\end{equation}
or equivalently,
\begin{equation}
 \pi_M(B(y,s))
 \le C\left(\frac{s}{r_j}\right)^{\delta_0}
 \frac{w(B(y,s))}{s^2}.
 \label{eq:finite-positive-gain-measure}
\end{equation}
Thus the selected balls, together with $m_{M,*}$, form a normalized growth-generated cover for $\pi_M$.
\end{enumerate}
All constants may depend on the fixed parameters $M$, $A_M$, $c_*$, and $\Lambda$, as well as on the structural data, but not on $x$, $y$, or $j$.
\end{proposition}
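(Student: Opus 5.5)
We follow the proof of the global Euclidean construction (\cref{lem:critical-normalization}, \cref{cor:local-critical-comparison}, \cref{prop:critical-covering}, \cref{lem:growth-critical-balls}), but use only radii at most $R_*$. The cap $r_*=c_*R_*$ together with the margin \eqref{eq:finite-margin-choice} keeps every auxiliary radius we touch, such as $(2L_0+2)\rho$ or $2\rho$, inside the range where \eqref{eq:finite-scale-decay-base} and \eqref{eq:finite-scale-enlargement-base} hold. The augmented measure does two jobs. Its term $M(r/R_*)^2$ makes $\Phi_M$ bounded below at the cap scale, and adding $MR_*^{-2}\,dw$ preserves decay with exponent $\delta_0=\min\{\delta,2\}$.

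For (i), first note that $\Phi_M(x,r)\to0$ as $r\downarrow0$. Indeed, \eqref{eq:finite-scale-decay-base} gives $\PhiPi(x,r)\le C_0(r/r_*)^\delta\PhiPi(x,r_*)$, which tends to zero, and the augmentation term is $O(r^2)$. Hence $\rho_{M,*}(x)>0$. The bound $\Phi_M(x,\rho)\le A_M$ follows by continuity from below of the measures, as in \cref{lem:critical-normalization}. For the lower bound there are two cases.
\begin{itemize}
\item If $\rho<r_*$, then $\Phi_M(x,2\rho)>A_M$, provided $2\rho\le r_*$. Otherwise radii in $(\rho,r_*]$ violate the threshold, and we use a radius $r'\in(\rho,\min\{2\rho,r_*\}]$. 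In either case, \eqref{eq:finite-scale-enlargement-base} and doubling give $\Phi_M(x,2r)\le C_2(\Phi_M(x,r)+1)$ for $2r\le R_*$; this is why $C_2=4\max\{C_E,C_D\}$ is chosen. Combined with $A_M\ge2C_2$, this yields $\Phi_M(x,\rho)\ge A_M/C_2-1\ge1$.
\item If $\rho=r_*$, then $\Phi_M\ge Mc_*^2$ directly.
\end{itemize}

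For (ii), compare centers as in the proof of \eqref{eq:global-rho-comparison}, with $T\le 1+L$ bounded and all radii kept below $R_*$. For the lower bound on $\rho_{M,*}(y)$, bound $\Phi_M(y,S)$ with $S=2Tr\le(2L_0+2)r_*\le R_*$. This uses $\pi(B(x,3Tr))$, controlled by iterating the enlargement inequality a bounded number of times, and doubling of $w$. Then apply decay of $\Phi_M$ at $y$ from $S$ down to $s=c\rho$. Decay for the $\pi$-part holds with exponent $\delta$; the augmentation part decays like $(s/S)^2$. Together they give exponent $\delta_0$ for $\Phi_M$, and choosing $c$ small makes $s$ admissible.

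For the upper bound we cannot reverse decay beyond $r_*$. Instead, if $\rho_{M,*}(y)>K\rho_{M,*}(x)$ for a large $K$, we swap the roles of $x$ and $y$. Since $|x-y|\le L\rho(x)<\rho(y)$, the lower-bound argument at center $y$ gives $\rho(x)\gtrsim\rho(y)$, a contradiction. Part (iii) is then the maximal-disjoint-family argument of \cref{prop:critical-covering} verbatim, with $\eta$ chosen via (ii) at $L\le L_0=2\Lambda+2$. The overlap count uses comparability at distance $2\Lambda r_i$, which is exactly within $L_0$.

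For (iv), take $y\in B_j$ and $s\le r_j$. The inclusions $B(y,r_j)\subset B(x_j,2r_j)$, enlargement, and (i) give $\Phi_M(y,r_j)\le C$. Decay at center $y$ from $r_j$ to $s$, with exponent $\delta_0$ for both parts, gives \eqref{eq:finite-positive-gain-Phi}. Together with (i)--(iii), the hypotheses of \cref{cor:growth-generated-cover} hold for $\pi_M$.

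The main obstacle is the upper half of the local comparison in (ii): the global proof reversed scale decay above the critical radius, which is unavailable past $r_*$. The symmetric-swap argument above is what we would try first. The remaining work is careful bookkeeping that every radius used is at most $R_*$ (respectively $R_*/2$ for enlargement), which is what \eqref{eq:finite-margin-choice} is designed to guarantee.
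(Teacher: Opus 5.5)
Your proof is correct in outline. For (i), the lower half of (ii), (iii) and (iv) it runs parallel to the paper's Steps 1--3, 5 and 6.

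\textbf{Part (i).} You split at $\rho<r_*$ versus $\rho=r_*$ rather than at $r_*/2$. This works, provided you bound $\Phi_M(x,r')\le 4\max\{C_E,C_D\}\bigl(\Phi_M(x,\rho)+1\bigr)$ directly for a non-admissible $r'\in(\rho,\min\{2\rho,r_*\}]$. The direct bound uses $\pi_M(B(x,r'))\le\pi_M(B(x,2\rho))$ and $w(B(x,r'))\ge w(B(x,\rho))$. Do not route it through $\Phi_M(x,2\rho)$: that costs a factor $C_D$, which $A_M\ge2C_2$ does not absorb.

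\textbf{Upper half of (ii).} This is where you genuinely differ from the paper. The paper does reverse the decay, because reversed decay is available on all of $(0,R_*]$ and only radii up to $r_*$ need to be excluded. It disposes of $\rho(x)\ge r_*/D_L$ by the cap. Otherwise it combines the interior lower normalization $\Phi_M(x,\rho(x))>A_M/C_2-1$ with reversed decay between $(L+1)\rho(x)$ and $r_*$. So the obstacle you anticipate is not actually present.

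Your swap is nonetheless valid and shorter. If $\rho(y)>\max\{L,\theta_1^{-1}\}\rho(x)$, then $|x-y|<\rho(y)$. The $L=1$ lower comparison centred at $y$ then gives $\rho(x)\ge\theta_1\rho(y)$, a contradiction. Your route derives the upper comparison from the lower one alone. It uses only the upper normalization $\Phi_M\le A_M$, and needs neither reversed decay nor the lower bound from (i). The paper's route mirrors the global proof and gives the upper bound for each $L$ directly.

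\textbf{A radius to correct.} With $S=2Tr$ and $T\le1+L\le1+L_0$, the ball $B(x,3Tr)$ you enlarge to has radius up to $3(L_0+1)r_*$. The margin \eqref{eq:finite-margin-choice} only gives $3(L_0+1)r_*\le\tfrac32R_*$. So for $L$ near $L_0$ and $\rho(x)$ near $r_*$, the iterated enlargement leaves the range of \eqref{eq:finite-scale-enlargement-base}. The tighter ball $B(x,S+|x-y|)$ does not help, since its radius can reach $(3L_0+2)r_*>R_*$.

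Your swap never uses $B(x,r)\subset B(y,S)$; that inclusion served only the reversed-decay step you avoid. So choose $S$ smaller:
\begin{itemize}
\item $S=(L+2)r$, so that $B(y,S)\subset B(x,(2L+2)r)$ with $(2L+2)r\le R_*$, as in the paper; or
\item simply $S=r$, with $B(y,r)\subset B(x,(L+1)r)$.
\end{itemize}
With this change the argument is complete.
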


\begin{proof}
\emph{Step 1: finite-scale estimates for the augmented measure.}
For $0<r<R\le R_*$, \eqref{eq:finite-scale-decay-base} gives
\begin{equation}
 \Phi_M(x,r)
 \le C_S\left(\frac rR\right)^{\delta_0}\Phi_M(x,R),
 \qquad C_S:=\max\{C_0,1\}.
 \label{eq:finite-scale-decay-augmented}
\end{equation}
Indeed, the first term in \eqref{eq:finite-scale-augmented-measure} has exponent $\delta$, while the added term has exponent $2$.

By \eqref{eq:finite-scale-enlargement-base} and doubling, whenever $2r\le R_*$,
\begin{equation}
 \pi_M(B(x,2r))
 \le C_E'\left[\pi_M(B(x,r))+\frac{w(B(x,r))}{r^2}\right],
 \qquad C_E':=\max\{C_E,C_D\}.
 \label{eq:finite-enlargement-augmented}
\end{equation}
Consequently,
\begin{align*}
 \Phi_M(x,2r)
 &=\frac{4r^2\pi_M(B(x,2r))}{w(B(x,2r))}\\
 &\le4C_E'\bigl(\Phi_M(x,r)+1\bigr)
 \le C_2\bigl(\Phi_M(x,r)+1\bigr).
\end{align*}
Thus
\begin{equation}
 \Phi_M(x,2r)\le C_2\bigl(\Phi_M(x,r)+1\bigr),
 \qquad 2r\le R_*.
 \label{eq:finite-Phi-double}
\end{equation}
Iteration and comparison with the next dyadic scale yield
\begin{equation}
 \Phi_M(x,tr)+1
 \le Ct^\kappa\bigl(\Phi_M(x,r)+1\bigr),
 \qquad t\ge1,\quad tr\le R_*.
 \label{eq:finite-Phi-polynomial}
\end{equation}

\emph{Step 2: positivity and normalization.}
For fixed $x$, \eqref{eq:finite-scale-decay-base} implies $\PhiPi(x,r)\to0$ as $r\downarrow0$, and the added term also tends to zero. Thus $\rho_{M,*}(x)>0$. The upper bound $\rho_{M,*}(x)\le r_*$ is part of the definition.

Let $\rho=\rho_{M,*}(x)$. By continuity from below along radii increasing to $\rho$,
\begin{equation}
 \Phi_M(x,\rho)\le A_M.
 \label{eq:finite-upper-normalization}
\end{equation}
If $\rho<r_*/2$, then $2\rho\le r_*$ and $2\rho$ cannot be admissible. Thus $\Phi_M(x,2\rho)>A_M$, and \eqref{eq:finite-Phi-double} gives
\begin{equation}
 \Phi_M(x,\rho)>A_M/C_2-1\ge1.
 \label{eq:finite-interior-lower}
\end{equation}
If $\rho\ge r_*/2$, then
\begin{equation*}
 \Phi_M(x,\rho)
 \ge M(\rho/R_*)^2\ge Mc_*^2/4.
\end{equation*}
Consequently \eqref{eq:finite-critical-normalization} holds with
\begin{equation*}
 c_{M,*}:=\min\{A_M/C_2-1,Mc_*^2/4\}>0.
\end{equation*}

\emph{Step 3: lower comparison of critical radii.}
Fix $L\in[1,L_0]$, write $r=\rho_{M,*}(x)$, and assume $|x-y|\le Lr$. Set
\[
 S:=(L+2)r,
 \qquad U:=(2L+2)r.
\]
Then $B(y,S)\subset B(x,U)$, and \eqref{eq:finite-margin-choice} ensures $U\le R_*$. Also
\[
 B(x,U)\subset B(y,(3L+2)r),
 \qquad \frac{3L+2}{L+2}\le3.
\]
Doubling and \eqref{eq:finite-Phi-polynomial} therefore give
\begin{equation*}
 \Phi_M(y,S)
 \le C_L\Phi_M(x,U)
 \le C_L(A_M+1).
\end{equation*}
Choose $\theta_L\in(0,1)$ so small that
\[
 C_S\left(\frac{\theta_L}{L+2}\right)^{\delta_0}C_L(A_M+1)\le A_M.
\]
Applying \eqref{eq:finite-scale-decay-augmented} at the center $y$ between $\theta_Lr$ and $S$ gives $\Phi_M(y,\theta_Lr)\le A_M$. Hence
\begin{equation}
 \rho_{M,*}(y)\ge\theta_Lr.
 \label{eq:finite-lower-comparison}
\end{equation}

\emph{Step 4: upper comparison of critical radii.}
Set $S_0:=(L+1)r$. The inclusions
\[
 B(x,r)\subset B(y,S_0)\subset B(x,(2L+1)r)
\]
and doubling give
\begin{equation}
 \Phi_M(y,S_0)\ge c_L\Phi_M(x,r).
 \label{eq:finite-cross-lower}
\end{equation}
Choose $D_L\ge\max\{2,L+1\}$ sufficiently large that
\begin{equation}
 C_S^{-1}\left(\frac{D_L}{L+1}\right)^{\delta_0}
 c_L(A_M/C_2-1)>C_SA_M.
 \label{eq:finite-D-choice}
\end{equation}
If $r\ge r_*/D_L$, then the cap immediately gives
\[
 \rho_{M,*}(y)\le r_*\le D_Lr.
\]
Suppose $r<r_*/D_L$. Then $r<r_*/2$, so \eqref{eq:finite-interior-lower} applies. Reversing \eqref{eq:finite-scale-decay-augmented} between $S_0$ and $D_Lr$, and using \eqref{eq:finite-cross-lower} and \eqref{eq:finite-D-choice}, gives
\begin{equation*}
 \Phi_M(y,D_Lr)>C_SA_M.
\end{equation*}
For every $R$ with $D_Lr<R\le r_*$, another reversed use of \eqref{eq:finite-scale-decay-augmented} yields $\Phi_M(y,R)>A_M$. Thus no radius larger than $D_Lr$ is admissible, and
\begin{equation*}
 \rho_{M,*}(y)\le D_Lr.
\end{equation*}
Together with \eqref{eq:finite-lower-comparison}, this proves \eqref{eq:finite-local-comparison}.

\emph{Step 5: construction and overlap of the cover.}
Let $C_1$ be the comparison constant in \eqref{eq:finite-local-comparison} for $L=1$, and choose
\begin{equation*}
 0<\eta<\min\left\{\frac14,\frac1{2(C_1+1)}\right\}.
\end{equation*}
Select a maximal pairwise disjoint family
\[
 \{B(x_j,\eta\rho_{M,*}(x_j))\}_{j\in\N}.
\]
Write $r_j=\rho_{M,*}(x_j)$. For arbitrary $x$, maximality gives an index $j$ such that
\[
 |x-x_j|<\eta(\rho_{M,*}(x)+r_j).
\]
If $\rho_{M,*}(x)\le r_j$, then $|x-x_j|<2\eta r_j<r_j$. If $\rho_{M,*}(x)>r_j$, then $|x-x_j|<2\eta\rho_{M,*}(x)<\rho_{M,*}(x)$, and \eqref{eq:finite-local-comparison}, centered at $x$, gives $\rho_{M,*}(x)\le C_1r_j$. Hence
\[
 |x-x_j|<\eta(C_1+1)r_j<r_j.
\]
Thus $x\in B_j$, proving the covering.

If $z\in\Lambda B_i\cap\Lambda B_j$ and $r_i\ge r_j$, then
\[
 |x_i-x_j|\le\Lambda(r_i+r_j)\le2\Lambda r_i.
\]
Since $2\Lambda\le L_0$, \eqref{eq:finite-local-comparison} gives $r_i\simeq_\Lambda r_j$. The reduced balls are pairwise disjoint, have comparable radii, and lie in a ball of comparable radius. Geometric doubling, or \cite[Lemma~2.12]{BjornKalamajska2022}, therefore proves the pointwise overlap bound in \eqref{eq:finite-cover-overlap}. Finally, \eqref{eq:finite-m-comparison} follows from \eqref{eq:finite-local-comparison} with $L=1$.

\emph{Step 6: positive gain on the selected balls.}
Fix $j$, put $r=r_j$, and let $y\in B_j$. By \eqref{eq:finite-margin-choice}, $2r\le R_*$. Since $B(y,r)\subset B(x_j,2r)$, \eqref{eq:finite-enlargement-augmented}, doubling, and \eqref{eq:finite-upper-normalization} give
\[
 \pi_M(B(y,r))
 \le C\left[\pi_M(B(x_j,r))+\frac{w(B(x_j,r))}{r^2}\right]
 \le C\frac{w(B(y,r))}{r^2}.
\]
Thus $\Phi_M(y,r)\le C$. Applying \eqref{eq:finite-scale-decay-augmented} at the center $y$ between $s$ and $r$ proves \eqref{eq:finite-positive-gain-Phi}, and hence \eqref{eq:finite-positive-gain-measure}.
\end{proof}

\begin{remark}
The augmentation parameter $M$ is fixed throughout the finite-scale construction, and the constants are allowed to depend on it. The proposition must be applied to the unaugmented base measure; the measure $\pi_M$ in \eqref{eq:finite-scale-augmented-measure} is the output of the augmentation step.
\end{remark}

\subsection{Reverse-H\"older function potentials}

Let $(X,d,\omega)$ be an unbounded complete $2$-PI space. Fix a Sobolev--Poincar\'e exponent $\kappa>1$ for which
\begin{equation}
 \left(\fint_B|u-u_{B,\omega}|^{2\kappa}\,d\omega\right)^{1/(2\kappa)}
 \le Cr_B\left(\fint_{\lambda_SB}g_u^2\,d\omega\right)^{1/2}.
 \label{eq:Sobolev-Poincare-RH}
\end{equation}
Let $d\pi=V\,d\omega$ with $V\in RH_q(d\omega)$ and $q'\le\kappa$. We use the zero-boundary Sobolev inequality from \cref{lem:zero-boundary-sobolev}.

For absolutely continuous potentials, the trace estimate can be verified directly.

\begin{proposition}
For every ball $B$ and every $u\in N^{1,2}_{\loc}(X)$,
\begin{equation}
 \int_B|\widetilde u-u_{B,\omega}|^2\,d\pi
 \le C\frac{r_B^2\pi(B)}{\omega(B)}
 \int_{\lambda_SB}g_u^2\,d\omega.
 \label{eq:RH-trace}
\end{equation}
Consequently, on balls satisfying $\pi(B)\simeq\omega(B)/r_B^2$, the mixed Poincar\'e estimate follows directly.

For compact $K\subset B$, one also has
\begin{equation}
 \pi(K)
 \le C\frac{r_B^2\pi(\Lambda B)}{\omega(\Lambda B)}
 \capw(K;\Lambda B).
 \label{eq:RH-capacity}
\end{equation}
Thus a uniform normalized capacity estimate additionally requires uniform control of the fixed-dilate ratio in \eqref{eq:RH-capacity}; it is not inferred from normalization on $B$ alone.
\end{proposition}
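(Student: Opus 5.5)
For the trace estimate \eqref{eq:RH-trace} I will use H\"older's inequality with exponents $q$ and $q'$ directly on the ball $B$, and then pass to the Sobolev--Poincar\'e inequality. Since $\pi=V\,d\omega$ is absolutely continuous, $\widetilde u=u$ holds $\pi$-a.e., so no representative issue arises. Then
\[
 \int_B|u-u_{B,\omega}|^2V\,d\omega
 \le\omega(B)\Bigl(\fint_BV^q\,d\omega\Bigr)^{1/q}
 \Bigl(\fint_B|u-u_{B,\omega}|^{2q'}\,d\omega\Bigr)^{1/q'}.
\]
The reverse H\"older condition gives $(\fint_BV^q)^{1/q}\le C\fint_BV=C\pi(B)/\omega(B)$. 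Because $q'\le\kappa$, monotonicity of normalized means lets me replace the $2q'$-mean by the $2\kappa$-mean. Applying \eqref{eq:Sobolev-Poincare-RH} then bounds the last factor by $Cr_B^2\fint_{\lambda_SB}g_u^2$. Finally, doubling converts $\omega(B)\cdot\omega(\lambda_SB)^{-1}$ into a constant, which yields \eqref{eq:RH-trace}.

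The mixed Poincar\'e consequence on normalized balls follows as in \cref{thm:general-mixed-estimate}. I split $|u(x)-\widetilde u(y)|^2$ into the two pieces centered at $u_{B,\omega}$. The $d\omega$ piece is handled by \cref{lem:quadratic-pi}, and the $d\pi$ piece by \eqref{eq:RH-trace}. Under $\pi(B)\simeq\omega(B)/r_B^2$, both pieces are bounded by $Cr_B^2\pi(B)\int g_u^2$.

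For \eqref{eq:RH-capacity}, I take an admissible $v\in N_0^{1,2}(\Lambda B)$ with $\widetilde v\ge1$ q.e.\ on $K$. Truncating, I may assume $0\le v\le 1$, and since $\pi\ll\omega$ this gives $\pi(K)\le\int_{\Lambda B}|v|^2V\,d\omega$. Next I apply H\"older on $\Lambda B$ in the same way as above, using the reverse H\"older inequality on the ball $\Lambda B$. This produces the factor $C\pi(\Lambda B)/\omega(\Lambda B)$ times $\omega(\Lambda B)(\fint_{\Lambda B}|v|^{2\kappa})^{1/\kappa}$. I then invoke \cref{lem:zero-boundary-sobolev} with $\Theta=\Lambda$ and $\Theta'=4\Lambda$, which bounds this by $Cr_B^2\omega(\Lambda B)\fint_{4\Lambda B}g_v^2$. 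Since $g_v$ vanishes outside $\Lambda B$ and $\omega(4\Lambda B)\simeq\omega(\Lambda B)$ by doubling, this is $\lesssim r_B^2\int_{\Lambda B}g_v^2$. Taking the infimum over admissible $v$ gives \eqref{eq:RH-capacity}.

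The main point needing care is the dilation bookkeeping. The statement pairs the capacity on $\Lambda B$ with the ratio $\pi(\Lambda B)/\omega(\Lambda B)$, and \cref{lem:zero-boundary-sobolev} requires $\Lambda>1$ as its $\Theta$. If $\Lambda$ were too close to $1$, I would still apply the lemma with $\Theta=\Lambda$ and an outer ball $\Theta'\ge4\Lambda$, as done above, so this causes no difficulty. The closing remark of the statement is explanatory: normalization on $B$ does not control $\pi(\Lambda B)/\omega(\Lambda B)$ without an enlargement condition, so nothing beyond the displayed estimate has to be proved there.
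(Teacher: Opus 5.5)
Your proposal is correct and follows the paper's proof essentially step for step. For \eqref{eq:RH-trace} you use H\"older with exponents $(q,q')$, the reverse-H\"older bound, monotonicity of normalized means via $q'\le\kappa$, and then \eqref{eq:Sobolev-Poincare-RH}; for \eqref{eq:RH-capacity} you use the same H\"older and reverse-H\"older steps on $\Lambda B$, the zero-boundary Sobolev inequality of \cref{lem:zero-boundary-sobolev}, doubling, and an infimum over admissible $v$ (your observations that $\widetilde u=u$ holds $\pi$-a.e.\ and that capacity-zero sets are $\pi$-null, both because $\pi\ll\omega$, spell out what the paper uses tacitly).
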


\begin{proof}
H\"older's inequality, the reverse-H\"older condition, and \eqref{eq:Sobolev-Poincare-RH} give
\begin{align*}
 \int_B|u-u_{B,\omega}|^2V\,d\omega
 &\le \left(\int_BV^q\,d\omega\right)^{1/q}
 \left(\int_B|u-u_{B,\omega}|^{2q'}\,d\omega\right)^{1/q'}\\
 &\le C\frac{r_B^2\pi(B)}{\omega(B)}
 \int_{\lambda_SB}g_u^2\,d\omega.
\end{align*}
This proves \eqref{eq:RH-trace}. For \eqref{eq:RH-capacity}, let $v\in N_0^{1,2}(\Lambda B)$ be admissible for $K$. H\"older's inequality, the reverse-H\"older condition on $\Lambda B$, normalized $L^p$ monotonicity, and \eqref{eq:zero-boundary-sobolev} give
\[
 \begin{aligned}
 \pi(K)
 &\le\int_{\Lambda B}|v|^2V\,d\omega\\
 &\le C\frac{r_B^2\pi(\Lambda B)}{\omega(\Lambda B)}
 \int_{\Lambda B}g_v^2\,d\omega,
 \end{aligned}
\]
where doubling compares $\omega(\Lambda'B)$ with $\omega(\Lambda B)$. Taking the infimum over admissible $v$ proves \eqref{eq:RH-capacity}.
\end{proof}

\begin{remark}
In unweighted $\R^d$, one may take $\kappa=d/(d-2)$, and $q'\le\kappa$ is equivalent to $q\ge d/2$, the familiar reverse-H\"older threshold in Schr\"odinger analysis.
\end{remark}

\subsection{Carnot groups}

Let $G$ be a Carnot group of homogeneous dimension $Q>2$, equipped with a homogeneous Carnot--Carath\'eodory distance and Haar measure $dg$. Then $|B_G(x,r)|\simeq r^Q$, and the Newtonian Sobolev space agrees with the horizontal Sobolev space, with comparable energy represented by $|\nabla_Hu|$; see \cite{HKST}.

The metric theorem immediately specializes to Carnot groups.

\begin{corollary}
Let $B_0=B_G(x_0,R)$. If
\begin{equation}
 \pi(B_G(y,r))\le C R^{-\delta}r^{Q-2+\delta},
 \qquad y\in B_0,\quad0<r\le R,
 \label{eq:Carnot-growth}
\end{equation}
for some $\delta>0$, and $\pi(B_0)\simeq R^{Q-2}$, then
\begin{equation*}
 \int_{B_0}\int_{B_0}|u(x)-\widetilde u(y)|^2\,dg(x)\,d\pi(y)
 \le CR^2\pi(B_0)\int_{\Lambda B_0}|\nabla_Hu|^2\,dg.
\end{equation*}
\end{corollary}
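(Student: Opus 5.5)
The plan is to reduce the Carnot statement to \cref{cor:growth-mixed} by checking that the hypotheses there hold for $(X,d,\omega)=(G,d_{CC},dg)$. First I would record that $G$ is an unbounded complete $2$-PI space. Haar measure is Ahlfors $Q$-regular, $|B_G(x,r)|\simeq r^Q$, so it is doubling. The weak $(1,1)$-Poincar\'e inequality of Jerison type holds for horizontal gradients, and H\"older's inequality upgrades it to the $(1,2)$ form of \cref{def:2PI}. Completeness and unboundedness follow from homogeneity under dilations. The identification of Newtonian energy with $|\nabla_H u|^2$ (cited from \cite{HKST}) lets me replace $g_u$ by $|\nabla_H u|$ at the end, since $g_u\le C|\nabla_Hu|$ a.e.

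Next I would verify the $\delta$-subcritical growth condition \eqref{eq:subcritical-growth}. For $y\in B_0$ and $0<r\le R$, Ahlfors regularity gives
\[
 \left(\frac rR\right)^\delta\frac{|B_G(y,r)|}{r^2}\simeq R^{-\delta}r^{Q-2+\delta}.
\]
Hence \eqref{eq:Carnot-growth} is exactly \eqref{eq:subcritical-growth}, with $C_{\mathrm{gr}}$ depending only on $C$ and the Ahlfors constants. Likewise $|B_0|/R^2\simeq R^{Q-2}$, so the hypothesis $\pi(B_0)\simeq R^{Q-2}$ is the two-sided normalization \eqref{eq:normalized-ball}.

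With these checks done, \cref{cor:growth-mixed} applies. Through \cref{thm:growth-capacity} and \cref{prop:mean-zero-trace} it gives
\[
 \int_{B_0}\int_{B_0}|u(x)-\widetilde u(y)|^2\,dg(x)\,d\pi(y)\le CR^2\pi(B_0)\int_{\Lambda B_0}g_u^2\,dg.
\]
Replacing $g_u$ by $|\nabla_H u|$ gives the stated inequality. In the displayed statement, $u(x)$ appears under $dg(x)$ where the corollary has $\widetilde u(x)$; the two agree $dg$-a.e., so this is harmless.

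There is no real obstacle here. The only points needing care are the structural facts about $G$, namely the $2$-PI property and the equivalence of the Newtonian and horizontal Sobolev energies, and both are quoted from the literature rather than proved.
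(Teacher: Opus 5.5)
Your proposal is correct and follows the paper's own proof. The paper likewise uses $|B_G(y,r)|/r^2\simeq r^{Q-2}$ to identify \eqref{eq:Carnot-growth} with \eqref{eq:subcritical-growth} (and $\pi(B_0)\simeq R^{Q-2}$ with \eqref{eq:normalized-ball}), then applies \cref{cor:growth-mixed}; your explicit checks of the $2$-PI structure of $G$ and of the replacement of $g_u$ by $|\nabla_Hu|$ spell out facts the paper takes from the cited literature.
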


\begin{proof}
Since $|B_G(y,r)|/r^2\simeq r^{Q-2}$, \eqref{eq:Carnot-growth} is exactly \eqref{eq:subcritical-growth}. Apply \cref{cor:growth-mixed}.
\end{proof}

\subsection{Quadratically admissible weights outside \texorpdfstring{$A_2$}{A2}}

Write $x=(x',x'')\in\R^2\times\R^{d-2}$ and let
\begin{equation*}
 w_\gamma(x):=(1+|x'|)^\gamma,
 \qquad \gamma>2.
\end{equation*}

The abstract theory applies to the following quadratically admissible weight outside $A_2$.

\begin{proposition}
The measure $w_\gamma\,dx$ is doubling and supports a quadratic Poincar\'e inequality, but $w_\gamma\notin A_2(\R^d)$.
\end{proposition}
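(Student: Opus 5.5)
The claim has three parts: $w_\gamma\,dx$ is doubling, it supports a quadratic Poincar\'e inequality, and $w_\gamma\notin A_2(\R^d)$. The plan is to prove each part directly, using the product structure $x=(x',x'')$ with a weight depending only on $x'\in\R^2$.

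\emph{Doubling.} On a ball $B(x,r)$ the weight is essentially constant, of size $(1+|x'|)^\gamma$, whenever $r\le\frac12(1+|x'|)$. When $r$ is larger than this, the weight is comparable to $(1+|x'|+r)^\gamma$ on a fixed proportion of the ball. This gives the two-regime estimate
\[
 w_\gamma(B(x,r))\simeq r^d(1+|x'|+r)^\gamma ,
\]
and doubling is immediate from it.

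\emph{Failure of $A_2$.} I would test the $A_2$ quantity on balls $B(0,r)$ with $r\to\infty$. The two averages are
\[
 \fint_{B(0,r)} w_\gamma\simeq r^\gamma,
 \qquad
 \fint_{B(0,r)} w_\gamma^{-1}\simeq r^{-2}\int_{|x'|<r}(1+|x'|)^{-\gamma}\,dx'\simeq r^{-2}.
\]
The last step uses that $\int_{\R^2}(1+|x'|)^{-\gamma}\,dx'$ converges precisely because $\gamma>2$. The product of the two averages is $\simeq r^{\gamma-2}\to\infty$, so $w_\gamma\notin A_2$. This is where the hypothesis $\gamma>2$ enters.

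\emph{Quadratic Poincar\'e.} This is the main step. I would cite \cite[Proposition~6]{Bjorn2001}, which the paper already invokes for exactly these weights. As a self-contained route I would argue as follows.
\begin{enumerate}[label=(\roman*)]
\item The radial weight $(1+|x'|)^\gamma$ on $\R^2$ is $2$-admissible. It is comparable to a power weight $|x'|^\gamma$ away from the unit ball and to a constant near the origin. Radial power weights $|x'|^\alpha$ with $\alpha>-2$ are $p$-admissible: they are quasiconformal Jacobians when $\alpha\ge0$, or one can use the known admissibility of $|x|^\alpha$ for all $\alpha>-n$.
\item The product of a $2$-admissible weight on $\R^2$ with Lebesgue measure on $\R^{d-2}$ is $2$-admissible. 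This follows from the standard tensorization of Poincar\'e inequalities for doubling product spaces.
\end{enumerate}
The quadratic form needed by the paper (Lemma~\ref{lem:quadratic-pi}) then follows automatically from the weak $(1,2)$ inequality.

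The obstacle I expect is the admissibility of the $\R^2$ factor for large $\gamma$, since the power weight is then far outside $A_2$. I would first try the direct approach: connect points by curves that avoid the origin and use a Riesz-potential bound, the standard pointwise route. Because the weight increases away from the origin, the tangential regions are harmless. If this fails, I would fall back on citing Bj\"orn's proposition.
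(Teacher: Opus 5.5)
Your proposal is correct and is essentially the paper's proof: the doubling and quadratic Poincar\'e properties come from \cite[Proposition~6]{Bjorn2001}, and failure of $A_2$ comes from the same averages $\fint_{B(0,r)}w_\gamma\,dx\simeq r^\gamma$ and $\fint_{B(0,r)}w_\gamma^{-1}\,dx\simeq r^{-2}$, the latter using $\gamma>2$ (admissibility already includes doubling, so your two-regime volume estimate is a harmless extra check). Your optional self-contained route is only a sketch as written, because comparability of $(1+|x'|)^\gamma$ with $|x'|^\gamma$ outside the unit disc does not by itself transfer the Poincar\'e inequality without a separate gluing argument across scales, so the citation is what actually carries that step.
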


\begin{proof}
The $2$-admissibility follows from \cite[Proposition~6]{Bjorn2001}, applied with the distinguished variable $x'\in\R^2$, $p=2$, and background weight equal to one. To see failure of $A_2$, let $B_R=B(0,R)$, $R\gg1$. Direct integration gives
\[
 \fint_{B_R}w_\gamma\,dx\simeq R^\gamma,
 \qquad
 \fint_{B_R}w_\gamma^{-1}\,dx\simeq R^{-2}.
\]
Hence the product of these two averages is comparable to $R^{\gamma-2}$ and tends to infinity.
\end{proof}

\subsection{Why bare \texorpdfstring{$A_p$, $p>2$}{Ap, p greater than 2}, is insufficient}

Fix $p>2$ and choose
\begin{equation*}
 1<\alpha<p-1,
 \qquad w_\alpha(x)=|x_1|^\alpha.
\end{equation*}
Then $w_\alpha\in A_p(\R^d)$, but the quadratic Poincar\'e inequality fails.

The next example shows that bare $A_p$ membership with $p>2$ is insufficient.

\begin{proposition}
Let $d\omega_\alpha=w_\alpha\,dx$ and $d\pi=d\omega_\alpha$. The scale-decay condition holds with $\delta=2$, and the controlled-enlargement condition holds, but the quadratic mixed Poincar\'e inequality fails.
\end{proposition}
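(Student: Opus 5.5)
We must verify three claims for $w_\alpha=|x_1|^\alpha$ with $1<\alpha<p-1$ and $\pi=\omega_\alpha$: the scale decay with $\delta=2$, the controlled enlargement, and failure of the quadratic mixed Poincar\'e inequality. The first two are immediate. Since $\pi=\omega_\alpha$, we have $\PhiPi(x,r)=r^2$, so
\[
 \PhiPi(x,r)=\left(\frac rR\right)^2\PhiPi(x,R)
\]
exactly, which is the scale decay with $\delta=2$ and $C_0=1$. For controlled enlargement, $w_\alpha\in A_p$ (the power $|x_1|^\alpha$ is in $A_p$ for $-1<\alpha<p-1$), hence $\omega_\alpha$ is doubling. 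Therefore
\[
 \pi(B(x,2r))\le C_D\,\pi(B(x,r)),
\]
which is stronger than what is required.

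For the failure, note that with $\pi=\omega_\alpha$ the quasicontinuity issue is harmless. The mixed oscillation equals the ordinary double oscillation, which by the variance identity is $2\omega(B)\int_B|u-u_{B,\omega}|^2\,d\omega$. On a ball with $r^2\pi(B)\simeq\omega(B)$, the mixed inequality
\[
 \mathcal O_{\omega,\pi}(u;B)\le Cr^2\pi(B)\int_{\Lambda B}|\nabla u|^2\,d\omega
\]
therefore reduces exactly to the quadratic weighted Poincar\'e inequality
\[
 \int_B|u-u_{B,\omega}|^2\,d\omega\le Cr^2\int_{\Lambda B}|\nabla u|^2\,d\omega .
\]
Since $\pi=\omega$, the normalization $r^2\pi(B)\simeq\omega(B)$ holds at $r\simeq1$, say for $B=B(0,1)$. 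So it suffices to exhibit $u$ for which this quadratic Poincar\'e inequality fails on $B(0,1)$, equivalently $\int|\nabla u|^2|x_1|^\alpha\,dx=0$ or arbitrarily small relative to the variance.

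The key point, and the main step, is that for $\alpha>1$ the weight $|x_1|^{-\alpha}$ is not locally integrable, so the hyperplane $\{x_1=0\}$ has zero $(2,w_\alpha)$-capacity in the appropriate sense and separates the two half-balls. I would construct $u_\varepsilon$ depending only on $x_1$: $u_\varepsilon=-1$ for $x_1<-\varepsilon$, $u_\varepsilon=1$ for $x_1>\varepsilon$, with the energy-minimizing profile in between, namely
\[
 u_\varepsilon'(t)\propto|t|^{-\alpha}\quad\text{on }[\varepsilon_1,\varepsilon],
\]
or more simply a linear ramp on $[\varepsilon^2,\varepsilon]$ and constant $0$ near $0$, giving two ramps. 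A linear ramp from $0$ to $1$ on $[a,2a]$ has energy about $a^{-2}\cdot a^{\alpha}\cdot a=a^{\alpha-1}\to0$ as $a\to0$, precisely because $\alpha>1$. So take $u_a=\phi(x_1/a)$ with $\phi$ odd, $\phi=0$ on $[-1,1]$, $\phi=\pm1$ for $|t|\ge2$, linear in between. Then
\[
 \int_{\Lambda B}|\nabla u_a|^2w_\alpha\,dx\lesssim a^{\alpha-1}\to0,
\]
while the variance $\int_B|u_a-(u_a)_B|^2\,dw_\alpha\to\omega(B)>0$ by symmetry (the mean is $0$). This contradicts the inequality, and the same functions violate the mixed estimate directly.

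One should also note that $u_a$ is Lipschitz, so it lies in $W^{1,2}_{w,\loc}$ and quasicontinuity is trivial. The only delicate point is stating which form of the mixed Poincar\'e inequality is meant; I would phrase the failure for any fixed $\Lambda$ and $C$, with $B=B(0,1)$.
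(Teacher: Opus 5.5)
Your proposal is correct and follows essentially the same route as the paper: $\Phi_\pi(x,r)=r^2$ gives the scale decay with $\delta=2$, doubling of $\omega_\alpha$ gives controlled enlargement, and an odd $x_1$-dependent transition function with weighted energy $\simeq a^{\alpha-1}\to0$ (using $\alpha>1$) but variance tending to $\omega_\alpha(B)$ defeats the quadratic Poincar\'e inequality, which the variance identity then transfers to the mixed oscillation. The differences are cosmetic: you use two ramps on $a<|x_1|<2a$ and the ball $B(0,1)$, whereas the paper uses a single linear ramp on $|x_1|<\varepsilon$ and the cube $(-1,1)^d$.
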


\begin{proof}
The classical power-weight criterion gives $w_\alpha\in A_p$ because $-1<\alpha<p-1$. Since $\pi=\omega_\alpha$, the dimensionless scale functional equals $r^2$, so the scale-decay condition holds with exponent two; controlled enlargement follows from doubling.

Let $Q=(-1,1)^d$ and define
\[
 u_\varepsilon(x)=
 \begin{cases}
 -1,&x_1\le-\varepsilon,\\
 x_1/\varepsilon,&|x_1|<\varepsilon,\\
 1,&x_1\ge\varepsilon.
 \end{cases}
\]
By symmetry, $(u_\varepsilon)_{Q,\omega_\alpha}=0$, and
\[
 \int_Q|u_\varepsilon|^2\,d\omega_\alpha\to\omega_\alpha(Q)>0.
\]
However,
\[
 \int_Q|\nabla u_\varepsilon|^2\,d\omega_\alpha
 \simeq \varepsilon^{-2}\int_{-\varepsilon}^{\varepsilon}|t|^\alpha\,dt
 \simeq\varepsilon^{\alpha-1}\to0.
\]
Thus quadratic Poincar\'e fails. Since $\pi=\omega_\alpha$, the mixed oscillation equals twice $\omega_\alpha(Q)$ times the weighted variance, and the mixed inequality fails as well.
\end{proof}

\subsection{Genuinely singular measure potentials}

Ahlfors-regular lower-dimensional sets give genuinely singular local examples.

\begin{proposition}
Let $(X,d,\omega)$ be $Q$-Ahlfors regular, $Q>2$, and let $E\subset X$ be $s$-Ahlfors regular with
\[
 s=Q-2+\delta,
 \qquad0<\delta<2.
\]
Fix $x_0\in E$, $R>0$, and set
\begin{equation*}
 d\pi_R:=R^{-\delta}\,d\mathcal H^s\lfloor_E.
\end{equation*}
Then, on $B_0=B(x_0,R)$,
\begin{equation*}
 \pi_R(B(y,r))
 \le C\left(\frac rR\right)^\delta
 \frac{\omega(B(y,r))}{r^2},
 \qquad0<r\le R,
\end{equation*}
and
\begin{equation}
 \pi_R(B_0)\simeq\frac{\omega(B_0)}{R^2}.
 \label{eq:singular-normalization}
\end{equation}
Hence the capacity and mixed Poincar\'e conclusions of \cref{thm:growth-capacity,cor:growth-mixed} hold. If $s<Q$, then $\pi_R\perp\omega$.
\end{proposition}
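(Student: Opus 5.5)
The plan is to verify the two displayed estimates directly from Ahlfors regularity and then invoke \cref{thm:growth-capacity,cor:growth-mixed}. Singularity will follow from a density argument.

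\emph{Growth bound.} Fix $y\in B_0$ and $0<r\le R$. If $B(y,r)\cap E=\varnothing$ there is nothing to prove. Otherwise pick $z\in B(y,r)\cap E$, so that $B(y,r)\subset B(z,2r)$. Upper $s$-regularity of $E$ then gives $\mathcal H^s(E\cap B(y,r))\le C(2r)^s$; if $2r$ exceeds the regularity range, first compare with radius $r$ or use a bounded-multiplicity covering. Consequently
\[
 \pi_R(B(y,r))\le CR^{-\delta}r^{Q-2+\delta}=C\left(\frac rR\right)^\delta r^{Q-2}.
\]
Ahlfors regularity of $\omega$ gives $r^{Q-2}\simeq\omega(B(y,r))/r^2$, and the growth bound follows. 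No lower bound on $E$ is needed for this part.

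\emph{Normalization \eqref{eq:singular-normalization}.} Since $x_0\in E$, two-sided $s$-regularity gives $\mathcal H^s(E\cap B(x_0,R))\simeq R^s$. Hence $\pi_R(B_0)\simeq R^{-\delta}R^{Q-2+\delta}=R^{Q-2}\simeq\omega(B_0)/R^2$. If $E$ is only regular up to some scale, or is bounded, I would restrict to $R$ in the admissible range, as the regularity definition implicitly does.

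\emph{Conclusions and singularity.} With both estimates established, \cref{thm:growth-capacity} yields capacitary domination and \cref{cor:growth-mixed} yields the trace and mixed estimates. For singularity when $s<Q$, I would show $\omega(E)=0$. For $x\in E$,
\[
 \frac{\omega(E\cap B(x,r))}{\omega(B(x,r))}\to 0 \quad\text{as } r\to 0 .
\]
To see this, cover $E\cap B(x,r)$ by $N\lesssim(r/\varepsilon)^s$ balls of radius $\varepsilon$; this uses lower $s$-regularity together with a maximal packing. Each of these balls has $\omega$-measure at most $C\varepsilon^Q$, so
\[
 \omega(E\cap B(x,r))\lesssim r^s\varepsilon^{Q-s}\to0 .
\]
Thus $\omega(E)=0$ locally, hence $\omega(E)=0$, while $\pi_R$ is concentrated on $E$. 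Therefore $\pi_R\perp\omega$.

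The main obstacle is mild bookkeeping: making sure the regularity constants cover all radii $r\le R$ with uniform constants, and that centers $y\notin E$ are handled by the recentering step.
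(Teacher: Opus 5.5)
Your proposal is correct and follows the paper's route: the growth bound and the normalization come directly from Ahlfors regularity of $\omega$ and of $E$, and then \cref{thm:growth-capacity,cor:growth-mixed} are applied. You are more careful than the paper, which skips the recentering step at centers $y\notin E$ and simply asserts $\pi_R\perp\omega$ without your packing argument for $\omega(E)=0$. One caveat applies to both: invoking those two results tacitly uses the standing assumption that $X$ is also an unbounded complete $2$-PI space, which Ahlfors regularity alone does not provide.
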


\begin{proof}
Ahlfors regularity gives
\[
 \pi_R(B(y,r))\lesssim R^{-\delta}r^{Q-2+\delta}
 \simeq \left(\frac rR\right)^\delta\frac{\omega(B(y,r))}{r^2}.
\]
The lower and upper regularity estimates at $x_0$ yield \eqref{eq:singular-normalization}.
\end{proof}

The following construction gives a global normalized cover for a purely singular measure.

\begin{proposition}
Let $d\ge3$, let $d\omega=dx$ on $\R^d$, and define the locally finite, purely singular Radon measure
\begin{equation}
 d\pi:=\sum_{k\in\mathbb Z}d\mathcal H^{d-1}\lfloor_{\{x_1=k\}}.
 \label{eq:periodic-hyperplane-measure}
\end{equation}
There exists a fixed-scale lattice cover $\{B_j\}$ and $m\equiv1$ such that $(\{B_j\},m)$ is a normalized growth-generated cover. In particular, the global conclusions of \cref{thm:global-fp,cor:energy-equivalence} apply to a measure $\pi\perp dx$.
\end{proposition}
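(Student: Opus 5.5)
We construct the cover explicitly and then invoke \cref{cor:growth-generated-cover}. Fix a radius $R_0\ge 2\sqrt d$ (so that balls $B_j=B(j,R_0)$, $j\in\mathbb Z^d$, cover $\R^d$ with room to spare) and take $m\equiv1$. Conditions \textup{(C1)}--\textup{(C3)} are then elementary: the covering holds pointwise since every point lies within $\sqrt d/2$ of a lattice point, and for fixed $\Lambda$ the number of $j\in\mathbb Z^d$ with $|x-j|<\Lambda R_0$ is bounded by a constant depending only on $d,\Lambda,R_0$. Since $r_j=R_0$ is fixed, $m\equiv1\simeq r_j^{-1}$ holds with $C_m$ depending on $R_0$. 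It remains to verify the lower normalization \eqref{eq:cover-normalization} and the uniform growth \eqref{eq:cover-growth}.

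For the lower normalization, note that $R_0\ge 2\sqrt d\ge1$ guarantees that every ball $B(j,R_0)$ contains a $(d-1)$-dimensional disc of radius comparable to $R_0$ in some hyperplane $\{x_1=k\}$. Indeed, the hyperplane $\{x_1=j_1\}$ passes through the center. Hence $\pi(B_j)\ge \mathcal H^{d-1}(B(j,R_0)\cap\{x_1=j_1\})\simeq R_0^{d-1}$. Since $|B_j|/R_0^2\simeq R_0^{d-2}$ and $R_0$ is fixed, this gives $\pi(B_j)\ge c_\pi|B_j|/R_0^2$.

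For growth, take $y\in B_j$ and $0<r\le R_0$. The ball $B(y,r)$ meets at most $2r+1\le 3R_0$ of the hyperplanes $\{x_1=k\}$, and each intersection is a $(d-1)$-disc of radius at most $r$. Thus
\[
 \pi(B(y,r))\le C(1+r)r^{d-1}\le C_{R_0}r^{d-1}.
\]
We want this bounded by $C(r/R_0)^\delta r^{d-2}$, which holds with $\delta=1$: $r^{d-1}=R_0(r/R_0)r^{d-2}$. The constants depend only on $d$ and $R_0$. This is the only step needing care, namely counting the hyperplanes hit when $r$ is comparable to $R_0$. The factor $1+r$ is harmless because $r\le R_0$ is fixed. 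The growth exponent $d-1=Q-2+\delta$ with $Q=d$ and $\delta=1$ is exactly codimension one, strictly subcritical.

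\Cref{cor:growth-generated-cover} then shows that $(\{B_j\},m)$ is a normalized growth-generated cover, and \cref{thm:global-fp,cor:energy-equivalence} apply. Finally, $\pi\perp dx$ because $\pi$ is concentrated on the Lebesgue-null set $\bigcup_k\{x_1=k\}$, and $\pi$ is locally finite since each ball meets finitely many hyperplanes.
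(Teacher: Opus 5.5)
Your proof is correct and follows the paper's argument: a fixed-radius lattice cover with $m\equiv1$, the lower normalization from the hyperplane $\{x_1=j_1\}$ through each center, the growth bound $\pi(B(y,r))\le C_{R_0}r^{d-1}=C_{R_0}(r/R_0)\,r^{d-2}$ giving $\delta=1$, and then \cref{cor:growth-generated-cover}. Your choice $R_0\ge2\sqrt d$ is slightly more careful than the paper's $R_0=2$: once $d\ge16$, the open balls $B(j,2)$, $j\in\mathbb Z^d$, miss the points of $(\tfrac12,\dots,\tfrac12)+\mathbb Z^d$, which lie at distance $\sqrt d/2$ from the lattice.
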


\begin{proof}
Choose $R_0=2$ and the lattice family $B_j=B(j,R_0)$, $j\in\mathbb Z^d$, which has bounded overlap and covers $\R^d$. Every $R_0$-ball meets at least one of the parallel hyperplanes in \eqref{eq:periodic-hyperplane-measure} in a disk of uniformly positive $(d-1)$-measure, while it meets only finitely many such hyperplanes. Hence
\[
 \pi(B_j)\simeq 1\simeq \frac{|B_j|}{R_0^2}.
\] Moreover, for $y\in B_j$ and $0<r\le R_0$, only $O(R_0+1)$ hyperplanes can meet $B(y,r)$ and
\[
 \pi(B(y,r))\le C_{R_0}r^{d-1}
 =C_{R_0}\left(\frac r{R_0}\right)\frac{|B(y,r)|}{r^2}.
\]
Thus \eqref{eq:cover-growth} holds with $\delta=1$, and \cref{cor:growth-generated-cover} applies. Since the support of \eqref{eq:periodic-hyperplane-measure} has zero Lebesgue measure, $\pi\perp dx$.
\end{proof}

The preceding examples establish that the natural abstract hypothesis is the $2$-PI property. The Euclidean $A_2$ class is an important verification class, not the maximal structural setting, while bare $A_p$ membership for $p>2$ does not control the quadratic geometry needed by the mixed theory.
 %
\section{Quadratic forms and generalized degenerate Schr\"odinger operators}
\label{sec:operators}

We now apply \cref{thm:euclidean-verification} to a degenerate elliptic form with a positive Radon form measure. Throughout this section, the hypotheses of \cref{thm:euclidean-verification} are in force, $\rho=\rho_A$, and
\begin{equation*}
 m=\rho^{-1}.
\end{equation*}

Let $A(x)$ be a real symmetric measurable $d\times d$ matrix satisfying
\begin{equation*}
 \lambda w(x)|\xi|^2
 \le A(x)\xi\cdot\xi
 \le \Lambda w(x)|\xi|^2
\end{equation*}
for almost every $x\in\R^d$ and every $\xi\in\R^d$, where $0<\lambda\le\Lambda<\infty$. The state space is
\begin{equation*}
 L_w^2:=L^2(\R^d,dw).
\end{equation*}

The Radon measure $\pi$ is kept as a form measure. We do not write it formally as an additive differential expression. When $d\pi=w\,d\mu$, the associated operator has the formal expression
\begin{equation*}
 -\frac1w\operatorname{div}(A\nabla)+\mu.
\end{equation*}

\subsection{Concrete energy spaces}

Let
\begin{equation*}
 W_w^{1,2}(\R^d)
 :=\{u\in L_w^2:\nabla u\in L_w^2(\R^d;\C^d)\}.
\end{equation*}
Every element has a weighted $2$-quasicontinuous representative, denoted by $\widetilde u$. Define
\begin{align*}
 \Vpi
 &:=\{u\in W_w^{1,2}:\widetilde u\in L^2(d\pi)\},\\
 \Vm
 &:=\{u\in W_w^{1,2}:mu\in L_w^2\}.
\end{align*}
Their natural norms are
\begin{align}
 \|u\|_{\Vpi}^2
 &:=\|u\|_{L_w^2}^2+\|\nabla u\|_{L_w^2}^2+
 \|\widetilde u\|_{L^2(d\pi)}^2,
 \notag\\
 \|u\|_{\Vm}^2
 &:=\|u\|_{L_w^2}^2+\|\nabla u\|_{L_w^2}^2+
 \|mu\|_{L_w^2}^2.
 \label{eq:Vm-norm}
\end{align}

The critical multiplier is locally bounded above and below.

\begin{lemma}
\label{lem:m-local-bounds}
For every compact $K\subset\R^d$, there are constants $0<c_K\le C_K<\infty$ such that
\begin{equation*}
 c_K\le m(x)\le C_K
 \quad\text{for almost every }x\in K.
\end{equation*}
\end{lemma}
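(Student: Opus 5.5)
The plan is to derive the statement directly from the cover structure built in \cref{prop:critical-covering}, in the same way as \cref{lem:cover-local-bounds}, although the critical-radius comparison estimates give a more explicit route. The most direct route uses \eqref{eq:global-rho-comparison}. Fix a compact $K$, choose a reference point $x_0\in K$, and set $D:=\sup_{y\in K}|x_0-y|<\infty$. Write $r_0:=\rho_A(x_0)$, which is positive and finite by \cref{lem:critical-normalization}. Applying \eqref{eq:global-rho-comparison} with $x=x_0$ gives, for every $y\in K$,
\[
 C_\rho^{-1}r_0\bigl(1+D/r_0\bigr)^{-a}
 \le\rho_A(y)
 \le C_\rho r_0\bigl(1+D/r_0\bigr)^{b}.
\]
Both bounds depend only on $K$, through $x_0$ and $D$, and are strictly positive and finite. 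Taking reciprocals then yields $c_K\le m(y)\le C_K$ for every $y\in K$, so the inequality in fact holds everywhere on $K$, not merely almost everywhere.

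As an alternative and a cross-check, one could argue through the cover. By \eqref{eq:critical-covering}, every point of $K$ lies in some open critical ball $B_j$, and on that ball \eqref{eq:critical-m-comparison} gives $m\simeq r_j^{-1}$. Compactness then extracts a finite subcover, and the minimum and maximum of the finitely many $r_j^{-1}$, adjusted by the comparison constants, give the bounds. This is exactly the argument of \cref{lem:cover-local-bounds} with (C1) and (C3) verified in \cref{thm:euclidean-verification}, so one may simply cite that lemma.

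No step is a genuine obstacle. The only point needing care is that $\rho_A(x_0)\in(0,\infty)$, which is supplied by \eqref{eq:rho-positive-finite}. The ``almost every'' qualifier is harmless, since the bounds hold pointwise; measurability of $m$ is not needed for the bounds themselves.
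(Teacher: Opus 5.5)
Your proof is correct. Your second, cover-based argument is essentially the paper's proof: finitely many critical balls from \cref{prop:critical-covering} cover $K$, and \eqref{eq:critical-m-comparison} compares $m$ with $r_j^{-1}$ on each ball.

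Your primary route is genuinely different. You anchor at a single point $x_0\in K$ and apply the global admissibility estimate \eqref{eq:global-rho-comparison}. This gives explicit constants $c_K=C_\rho^{-1}r_0^{-1}(1+D/r_0)^{-b}$ and $C_K=C_\rho r_0^{-1}(1+D/r_0)^{a}$, because $(1+t)^{-a}$ decreases and $(1+t)^{b}$ increases in $t$. This route needs only boundedness of $K$, not compactness, and uses no subcover. It is also logically upstream of the cover: the construction in \cref{prop:critical-covering} itself relies on \cref{cor:local-critical-comparison}, which is a consequence of \eqref{eq:global-rho-comparison}.

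The paper's route uses only the cover properties \textup{(C1)} and \textup{(C3)}. It is therefore the same argument as \cref{lem:cover-local-bounds}, and it transfers unchanged to abstract normalized capacitary covers. There $m$ need not be a critical radius, and no global comparison of the form \eqref{eq:global-rho-comparison} is available.

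Both routes give the bounds at every point of $K$, so you are right that the almost-everywhere qualifier is superfluous.
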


\begin{proof}
A compact set is covered by finitely many critical balls from \cref{prop:critical-covering}. On each such ball, \eqref{eq:critical-m-comparison} compares $m$ with the positive constant $r_j^{-1}$. Taking the minimum and maximum over the finite subcover proves the claim.
\end{proof}

The two concrete form domains therefore coincide.

\begin{proposition}
\label{prop:concrete-domain-equivalence}
One has
\begin{equation*}
 \Vpi=\Vm
\end{equation*}
as sets, with equivalent norms.
\end{proposition}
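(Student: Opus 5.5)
The plan is to read off both inclusions and the norm equivalence from the global Fefferman--Phong inequalities \eqref{eq:euclidean-fp-forward}--\eqref{eq:euclidean-fp-reverse} of \cref{thm:euclidean-verification}. These hold for every $u\in W^{1,2}_{w,\loc}(\R^d)$, with all terms in $[0,\infty]$. The common terms $\|u\|_{L^2_w}^2+\|\nabla u\|_{L^2_w}^2$ appear in both norms, so only the potential terms need to be compared.

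First take $u\in\Vpi$. Then $u\in W^{1,2}_w\subset W^{1,2}_{w,\loc}$, and $\widetilde u\in L^2(d\pi)$. By \eqref{eq:euclidean-fp-forward},
\[
 \|mu\|_{L^2_w}^2\le C\bigl(\|\nabla u\|_{L^2_w}^2+\|\widetilde u\|_{L^2(d\pi)}^2\bigr)<\infty .
\]
Hence $u\in\Vm$. Adding $\|u\|_{L^2_w}^2+\|\nabla u\|_{L^2_w}^2$ to both sides gives $\|u\|_{\Vm}^2\le (C+1)\|u\|_{\Vpi}^2$.

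Conversely, take $u\in\Vm$. By \eqref{eq:euclidean-fp-reverse},
\[
 \|\widetilde u\|_{L^2(d\pi)}^2\le C\bigl(\|\nabla u\|_{L^2_w}^2+\|mu\|_{L^2_w}^2\bigr)<\infty .
\]
So $u\in\Vpi$, and $\|u\|_{\Vpi}^2\le (C+1)\|u\|_{\Vm}^2$.

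The one point needing care is that $\widetilde u\in L^2(d\pi)$ must be meaningful, i.e.\ independent of the chosen quasicontinuous representative. For this I would use the earlier lemma stating that (C1) and (C5) force $\pi$ to vanish on sets of zero Sobolev $2$-capacity. The critical cover satisfies (C1) by \cref{prop:critical-covering} and (C5) by \cref{thm:euclidean-verification}.

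I would also recall that, for $w\in A_2$, the Newtonian space coincides with $W^{1,2}_w$, and $|\nabla u|$ is comparable to $g_u$. This justifies writing the abstract estimates with $|\nabla u|$, as is already done in \cref{thm:euclidean-verification}.

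No step should be a real obstacle; the argument is essentially bookkeeping on top of \cref{thm:euclidean-verification}.
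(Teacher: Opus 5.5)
Your proposal is correct and follows the paper's proof: both inclusions and both norm bounds are read off from \eqref{eq:euclidean-fp-forward} and \eqref{eq:euclidean-fp-reverse} after adding the common $L^2_w$ and gradient terms, and your remark on representative-independence is a sound extra check that the paper handles in an earlier lemma rather than in this proof. The paper also adds a short verification, not needed for the statement itself but used later for closedness of the form: $\Vm$ is complete, since a Cauchy sequence converges in $W^{1,2}_w$ and an a.e.\ convergent subsequence identifies the $L^2_w$-limit of $mu_n$ with $mu$, and this completeness transfers to $\Vpi$ through the norm equivalence.
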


\begin{proof}
The inclusions and norm estimates follow directly from \eqref{eq:euclidean-fp-forward} and \eqref{eq:euclidean-fp-reverse}, after adding the common $L_w^2$ term.

For completeness, $\Vm$ is a Hilbert space. Indeed, if $(u_n)$ is Cauchy in \eqref{eq:Vm-norm}, then $u_n\to u$ in $W_w^{1,2}$ and $mu_n\to v$ in $L_w^2$. Passing to a subsequence gives pointwise convergence of both $u_n$ and $mu_n$, hence $v=mu$ almost everywhere. Therefore $u\in\Vm$. The equivalence then also proves completeness of $\Vpi$.
\end{proof}

\subsection{Smooth form cores}

Smooth compactly supported functions form a core for the common energy space.

\begin{proposition}
\label{prop:smooth-core-density}
The space $C_c^\infty(\R^d)$ is dense in $\Vpi=\Vm$ in either of the equivalent norms.
\end{proposition}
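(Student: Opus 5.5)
By \cref{prop:concrete-domain-equivalence} the two norms are equivalent on the common space, so it suffices to prove density in the $\Vm$ norm. The reverse estimate \eqref{eq:euclidean-fp-reverse} is what lets us avoid any separate convergence argument in $L^2(d\pi)$. Indeed, applied to $u-\varphi$, it gives
\[
 \|\widetilde u-\varphi\|_{L^2(d\pi)}^2
 \le C\bigl(\|\nabla(u-\varphi)\|_{L_w^2}^2+\|m(u-\varphi)\|_{L_w^2}^2\bigr).
\]
Hence $\Vm$-convergence implies $\Vpi$-convergence. The plan is the usual two-step scheme: truncate in space, then mollify. At each step we control the multiplier term by local boundedness of $m$ (\cref{lem:m-local-bounds}) together with the global term $mu\in L_w^2$.

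\emph{Step 1: cutoff.} Fix $u\in\Vm$ and let $\eta_R(x)=\eta(x/R)$, where $\eta\in C_c^\infty$, $0\le\eta\le1$, $\eta=1$ on $B(0,1)$ and $\supp\eta\subset B(0,2)$. Then:
\begin{itemize}
\item $\|u-\eta_Ru\|_{L_w^2}\to0$ and $\|m(u-\eta_Ru)\|_{L_w^2}\to0$ by dominated convergence, since $|1-\eta_R|\le1$ and $u,mu\in L_w^2$;
\item $\nabla(\eta_Ru)-\nabla u=(\eta_R-1)\nabla u+u\nabla\eta_R$, and $\|u\nabla\eta_R\|_{L_w^2}\le CR^{-1}\|u\|_{L_w^2}\to0$.
\end{itemize}
So compactly supported elements of $\Vm$ are dense.

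\emph{Step 2: mollification.} Take $u\in\Vm$ with $\supp u\subset B(0,R_1)$. Since $w\in A_2$, the standard weighted theory of \cite{FabesKenigSerapioni} (or the density of smooth functions in $W_w^{1,2}$ for $A_2$ weights, via boundedness of the maximal function on $L_w^2$) gives $u*\phi_\varepsilon\to u$ in $W_w^{1,2}$. All the mollified functions are supported in the fixed compact set $K=\overline{B(0,R_1+1)}$. On $K$ we have $m\le C_K$ by \cref{lem:m-local-bounds}, so
\[
 \|m(u-u*\phi_\varepsilon)\|_{L_w^2}\le C_K\|u-u*\phi_\varepsilon\|_{L_w^2}\to0.
\]
Thus $u*\phi_\varepsilon\in C_c^\infty$ converges to $u$ in $\Vm$. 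The $\Vpi$ convergence then follows from the displayed reverse inequality, provided the quasicontinuous representative of $u-u*\phi_\varepsilon$ is $\widetilde u-u*\phi_\varepsilon$. This holds quasieverywhere, hence $\pi$-a.e. by the representative-independence lemma of \cref{sec:normalized-covers}.

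\emph{Main obstacle.} The only genuinely delicate point is the convergence of mollifications in $W_w^{1,2}$ for a merely $A_2$ weight. The approach here is to invoke the known result: $|u*\phi_\varepsilon|\le CMu$ with $M$ bounded on $L_w^2$, combined with the a.e. convergence $u*\phi_\varepsilon\to u$ and dominated convergence, applied to both $u$ and $\nabla u$. Everything else is bookkeeping, once one observes that convergence of the singular term $\|\widetilde u-\varphi\|_{L^2(d\pi)}$ cannot be obtained directly. It is supplied by \eqref{eq:euclidean-fp-reverse}, as anticipated in the introduction.
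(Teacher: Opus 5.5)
Your proposal is correct and matches the paper's proof essentially step for step. The paper also cuts off with $\eta_R u$, using dominated convergence for the $L_w^2$, gradient and multiplier terms. It then approximates a compactly supported element by smooth functions supported in one fixed compact set, controls the multiplier term by \cref{lem:m-local-bounds}, and passes to the $\Vpi$ norm via the norm equivalence; your appeal to \eqref{eq:euclidean-fp-reverse} is exactly this last step.

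The differences are cosmetic. The paper simply cites the Nakai--Tomita--Yabuta density theorem for $A_2$ weights, whereas you sketch it via $L_w^2$-boundedness of the maximal function. You also make explicit the $\pi$-a.e. identification of the quasicontinuous representative of $u-u*\phi_\varepsilon$, which the paper leaves implicit.
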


\begin{proof}
We work with $\Vm$. Let $u\in\Vm$, and choose standard cutoffs $\eta_R\in C_c^\infty$ satisfying
\[
 0\le\eta_R\le1,
 \quad \eta_R=1\text{ on }B(0,R),
 \quad \supp\eta_R\subset B(0,2R),
 \quad |\nabla\eta_R|\le C/R.
\]
Then $\eta_Ru\to u$ in $L_w^2$, in the weighted gradient norm, and in $L_w^2(m^2dw)$ by dominated convergence and the product rule. Hence compactly supported elements are dense in $\Vm$.

Suppose next that $u\in\Vm$ has compact support. The weighted Sobolev density theorem for $A_2$ weights, see \cite{NakaiTomitaYabuta}, gives $u_n\in C_c^\infty$ with supports in a common compact set $K$ and
\[
 u_n\to u\quad\text{in }W_w^{1,2}.
\]
By \cref{lem:m-local-bounds},
\[
 \|m(u_n-u)\|_{L_w^2}\le C_K\|u_n-u\|_{L_w^2}\to0.
\]
Thus $u_n\to u$ in $\Vm$, and hence also in $\Vpi$.
\end{proof}

\subsection{The closed form and its operator}

Define
\begin{equation*}
 \mathfrak a_\pi(u,v)
 :=\int_{\R^d}A\nabla u\cdot\overline{\nabla v}\,dx
 +\int_{\R^d}\widetilde u\,\overline{\widetilde v}\,d\pi,
 \qquad D(\mathfrak a_\pi)=\Vpi.
\end{equation*}

The preceding energy equivalence gives a closed quadratic form and its canonical self-adjoint realization.

\begin{theorem}
\label{thm:closed-form-realization}
The form $(\mathfrak a_\pi,\Vpi)$ is densely defined, symmetric, nonnegative, and closed in $L_w^2$. Moreover,
\begin{equation}
 \|u\|_{L_w^2}^2+\mathfrak a_\pi[u]
 \simeq \|u\|_{\Vpi}^2
 \simeq \|u\|_{\Vm}^2,
 \label{eq:form-norm-equivalence}
\end{equation}
$C_c^\infty(\R^d)$ is a form core, and there is a unique nonnegative self-adjoint operator $L_\pi$ associated with $\mathfrak a_\pi$. Its domain is
\begin{equation*}
 D(L_\pi)=\left\{u\in\Vpi:
 \begin{array}{l}
 \text{there is }f\in L_w^2\text{ such that}\\[-1mm]
 \mathfrak a_\pi(u,v)=(f,v)_{L_w^2}
 \text{ for every }v\in\Vpi
 \end{array}\right\},
\end{equation*}
and $L_\pi u=f$.
\end{theorem}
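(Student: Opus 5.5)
The plan is to reduce everything to \cref{prop:concrete-domain-equivalence}, \cref{prop:smooth-core-density}, and the standard Kato representation theorem for closed nonnegative forms. Symmetry and nonnegativity are immediate: $A$ is real symmetric, so $A\nabla u\cdot\overline{\nabla v}$ is Hermitian in $(u,v)$, and $\mathfrak a_\pi[u]=\int A\nabla u\cdot\overline{\nabla u}\,dx+\int|\widetilde u|^2\,d\pi\ge0$. The potential term is well defined on $\Vpi$. By the representative-independence lemma for normalized capacitary covers, which applies through \cref{thm:euclidean-verification}, $\pi$ does not charge sets of capacity zero. Hence the value of the integral does not depend on the choice of quasicontinuous representative.

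For \eqref{eq:form-norm-equivalence}, I will use the ellipticity bounds
\[
 \lambda\|\nabla u\|_{L_w^2}^2\le\int A\nabla u\cdot\overline{\nabla u}\,dx\le\Lambda\|\nabla u\|_{L_w^2}^2 .
\]
Together with the identical $L_w^2$ and $L^2(d\pi)$ terms, these give $\|u\|_{L_w^2}^2+\mathfrak a_\pi[u]\simeq\|u\|_{\Vpi}^2$. The second equivalence, with $\|u\|_{\Vm}^2$, is \cref{prop:concrete-domain-equivalence}.

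Closedness then follows because the form norm is equivalent to the $\Vpi$ norm. \Cref{prop:concrete-domain-equivalence} shows that $\Vpi$ is complete in that norm, and closedness of a form means exactly that its domain is complete in the form norm $(\|u\|_{L_w^2}^2+\mathfrak a_\pi[u])^{1/2}$.

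Density of the domain in $L_w^2$ comes from $C_c^\infty(\R^d)\subset\Vpi$. Such functions are bounded with compact support, and $\pi$ is Radon, so $\widetilde u=u\in L^2(d\pi)$. Moreover, $C_c^\infty$ is dense in $L_w^2$ because $w\,dx$ is a locally finite Borel measure on $\R^d$. The form-core property is \cref{prop:smooth-core-density} combined with the norm equivalence \eqref{eq:form-norm-equivalence}.

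Finally, Kato's first representation theorem applies to a densely defined, closed, nonnegative symmetric form on the Hilbert space $L_w^2$. It yields a unique nonnegative self-adjoint operator $L_\pi$ whose domain is the set of $u\in\Vpi$ for which $v\mapsto\mathfrak a_\pi(u,v)$ is represented by some $f\in L_w^2$, with $L_\pi u=f$. This is exactly the displayed description of $D(L_\pi)$.

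The only delicate point is the representative issue for $\widetilde u$ in the $\pi$-integral, including the sesquilinearity of $\int\widetilde u\,\overline{\widetilde v}\,d\pi$. Sums of quasicontinuous representatives are quasicontinuous representatives of the sum, and they agree quasieverywhere, hence $\pi$-a.e. This is where I will cite the capacity-absolute-continuity lemma. Everything else reduces to the already established equivalences.
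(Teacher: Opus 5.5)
Your proposal is correct and follows the same route as the paper. Ellipticity together with \cref{prop:concrete-domain-equivalence} gives \eqref{eq:form-norm-equivalence} and closedness (via completeness of $\Vpi$), \cref{prop:smooth-core-density} gives density and the core property, and the representation theorem for closed nonnegative symmetric forms produces $L_\pi$ with the stated domain. Your extra remarks, on the $\pi$-integral not depending on the chosen quasicontinuous representative and on density of $C_c^\infty(\R^d)$ in $L_w^2$, make explicit what the paper's short proof leaves implicit.
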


\begin{proof}
The ellipticity condition gives
\[
 \lambda\|\nabla u\|_{L_w^2}^2+
 \|\widetilde u\|_{L^2(d\pi)}^2
 \le\mathfrak a_\pi[u]
 \le\Lambda\|\nabla u\|_{L_w^2}^2+
 \|\widetilde u\|_{L^2(d\pi)}^2.
\]
Together with \cref{prop:concrete-domain-equivalence}, this proves \eqref{eq:form-norm-equivalence} and closedness. Density and the core assertion follow from \cref{prop:smooth-core-density}. The representation theorem for closed nonnegative symmetric forms gives $L_\pi$.
\end{proof}

The closed form is in fact a regular Dirichlet form.

\begin{corollary}
The form $\mathfrak a_\pi$ is a regular Dirichlet form. Consequently, $e^{-tL_\pi}$ is positivity preserving and sub-Markovian and admits consistent contraction extensions to $L_w^p$ for $1\le p\le\infty$.
\end{corollary}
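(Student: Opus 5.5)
We have to establish that $\mathfrak a_\pi$ is a regular Dirichlet form, which amounts to three things: the Markov property, a core that is dense in both topologies, and the standard semigroup consequences.

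\emph{Regularity.} We take the core $C_c^\infty(\R^d)$, or equivalently $C_c^\infty(\R^d)\cap\Vpi=C_c^\infty(\R^d)$. Two density statements are needed:
\begin{itemize}
\item density in the form norm, which is \cref{prop:smooth-core-density} combined with \eqref{eq:form-norm-equivalence};
\item uniform density in $C_c(\R^d)$, which is Stone--Weierstrass or mollification.
\end{itemize}
Before this we should confirm that $\mathfrak a_\pi$ is an admissible form on $L^2(\R^d,w\,dx)$ over the locally compact separable space $\R^d$. The measure $w\,dx$ is Radon with full support, because $w>0$ a.e. and $w\in A_2$.

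\emph{Markov property.} This is the substantive step. We work with real-valued $u\in\Vpi$, since the complex case reduces to the real part as usual, and set $v:=\min\{\max\{u,0\},1\}$. Three points must be checked.
\begin{enumerate}
\item We have $v\in W^{1,2}_w$ with $\nabla v=\one_{\{0<u<1\}}\nabla u$ a.e. This is the weighted chain rule for Lipschitz truncations, available via the Newtonian identification for $A_2$ weights.
\item The function $\min\{\max\{\widetilde u,0\},1\}$ is quasicontinuous and equals $v$ a.e., so it is a quasicontinuous representative of $v$. Since $\pi$ does not charge sets of capacity zero (the lemma following \cref{lem:cover-local-bounds}, applied through \cref{thm:euclidean-verification}), we get $\widetilde v=\min\{\max\{\widetilde u,0\},1\}$ $\pi$-a.e. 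Hence $|\widetilde v|\le|\widetilde u|$ $\pi$-a.e. and $v\in\Vpi$.
\item Since $A$ is symmetric and nonnegative, $A\nabla v\cdot\nabla v=\one_{\{0<u<1\}}A\nabla u\cdot\nabla u\le A\nabla u\cdot\nabla u$.
\end{enumerate}
Together these give $\mathfrak a_\pi[v]\le\mathfrak a_\pi[u]$. The main care is in point 2: the $\pi$-term is representative-sensitive, so the argument must use capacity absolute continuity and not merely $w$-a.e. identities.

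\emph{Consequences.} With $\mathfrak a_\pi$ a closed Markovian symmetric form, the Beurling--Deny criteria give the semigroup properties:
\begin{itemize}
\item the first criterion, that $|u|\in\Vpi$ with $\mathfrak a_\pi[|u|]\le\mathfrak a_\pi[u]$ (proved by the same argument as above), gives positivity preservation;
\item the second criterion, the unit contraction above, gives the sub-Markov property, i.e. $L^\infty$-contractivity.
\end{itemize}
Symmetry and duality then give $L^1$-contractivity. Riesz--Thorin interpolation yields consistent contraction extensions to $L^p_w$ for $1\le p\le\infty$, weak-$*$ continuous at $p=\infty$.
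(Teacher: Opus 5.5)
Your proposal is correct and follows the same route as the paper. The Markov property comes from normal contractions decreasing both the gradient part and the potential part of $\mathfrak a_\pi$, regularity comes from \cref{prop:smooth-core-density} together with the uniform density of $C_c^\infty(\R^d)$, and the semigroup statements follow from the Beurling--Deny criteria and interpolation. Your extra step identifying $\widetilde v=\min\{\max\{\widetilde u,0\},1\}$ $\pi$-almost everywhere, using that $\pi$ does not charge sets of capacity zero, is exactly the detail the paper's one-line argument leaves implicit.
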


\begin{proof}
Normal contractions reduce both the gradient part and the potential part of the form. Thus the form is Markovian. Regularity follows from \cref{prop:smooth-core-density} and the uniform density of $C_c^\infty(\R^d)$ in $C_0(\R^d)$.
\end{proof}

\subsection{Resolvent energy estimates}

The associated resolvent satisfies the following energy bounds.

\begin{proposition}
Let $\operatorname{Re}z>0$ and $u=(L_\pi+z)^{-1}f$, $f\in L_w^2$. Then
\begin{equation*}
 \|u\|_{L_w^2}\le(\operatorname{Re}z)^{-1}\|f\|_{L_w^2},
\end{equation*}
\begin{equation}
 \mathfrak a_\pi[u]+
 \frac{\operatorname{Re}z}{2}\|u\|_{L_w^2}^2
 \le\frac1{2\operatorname{Re}z}\|f\|_{L_w^2}^2,
 \label{eq:energy-resolvent}
\end{equation}
and
\begin{equation}
 \|\nabla u\|_{L_w^2}+\|\widetilde u\|_{L^2(d\pi)}+
 \|mu\|_{L_w^2}
 \le C(\operatorname{Re}z)^{-1/2}\|f\|_{L_w^2}.
 \label{eq:full-resolvent-energy}
\end{equation}
\end{proposition}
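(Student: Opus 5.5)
The plan is to test the resolvent equation against $u$ itself and then convert the resulting form bound into the multiplier bound via \cref{thm:closed-form-realization}. Since $u=(L_\pi+z)^{-1}f$ lies in $D(L_\pi)\subset\Vpi$, the operator domain characterization in \cref{thm:closed-form-realization} gives
\[
 \mathfrak a_\pi(u,v)+z(u,v)_{L_w^2}=(f,v)_{L_w^2},\qquad v\in\Vpi .
\]

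\textbf{Step 1: the basic identity.} Choose $v=u$. The form is symmetric and nonnegative, so $\mathfrak a_\pi[u]$ is real and nonnegative. Taking real parts then yields
\[
 \mathfrak a_\pi[u]+\operatorname{Re}z\,\|u\|_{L_w^2}^2=\operatorname{Re}(f,u)_{L_w^2}\le\|f\|_{L_w^2}\|u\|_{L_w^2}.
\]

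\textbf{Step 2: the $L_w^2$ bound.} Drop the nonnegative term $\mathfrak a_\pi[u]$ and divide by $\|u\|_{L_w^2}$; the case $u=0$ is trivial. This gives $\|u\|_{L_w^2}\le(\operatorname{Re}z)^{-1}\|f\|_{L_w^2}$.

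\textbf{Step 3: the energy bound \eqref{eq:energy-resolvent}.} Instead of discarding the form term, estimate the right-hand side of Step 1 by Young's inequality:
\[
 \|f\|\|u\|\le\frac{1}{2\operatorname{Re}z}\|f\|^2+\frac{\operatorname{Re}z}{2}\|u\|^2 .
\]
Absorbing half of $\operatorname{Re}z\,\|u\|^2$ into the left-hand side gives \eqref{eq:energy-resolvent} directly.

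\textbf{Step 4: the full energy bound \eqref{eq:full-resolvent-energy}.} Here I would use the lower ellipticity estimate from the proof of \cref{thm:closed-form-realization}:
\[
 \lambda\|\nabla u\|_{L_w^2}^2+\|\widetilde u\|_{L^2(d\pi)}^2\le\mathfrak a_\pi[u].
\]
Combined with \eqref{eq:energy-resolvent}, each of these two terms is at most $C(\operatorname{Re}z)^{-1}\|f\|^2$. For the multiplier term, I would not use the inhomogeneous norm equivalence \eqref{eq:form-norm-equivalence}. That route would bring in $\|u\|_{L_w^2}^2$ with coefficient one and give a bound like $(\operatorname{Re}z)^{-2}$, which has the wrong scaling when $\operatorname{Re}z$ is small. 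Instead I would apply the homogeneous global Fefferman--Phong inequality \eqref{eq:euclidean-fp-forward} of \cref{thm:euclidean-verification}:
\[
 \|mu\|_{L_w^2}^2\le C\bigl(\|\nabla u\|_{L_w^2}^2+\|\widetilde u\|_{L^2(d\pi)}^2\bigr)\le C'\mathfrak a_\pi[u].
\]
This is legitimate because $u\in\Vpi\subset W^{1,2}_{w,\loc}$. Taking square roots and summing the three terms then gives \eqref{eq:full-resolvent-energy}.

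The main point requiring care is this last scaling issue: one must use the homogeneous estimate rather than the inhomogeneous norm equivalence, so that the bound is uniform as $\operatorname{Re}z\downarrow0$. Apart from this, all steps are routine.
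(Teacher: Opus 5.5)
Your proposal is correct and follows the paper's own argument. You test the resolvent identity against $u$, take real parts, apply Young's inequality, and then combine ellipticity with the homogeneous estimate \eqref{eq:euclidean-fp-forward}; the only cosmetic difference is that the paper obtains the $L_w^2$ bound from the spectral theorem rather than by dropping $\mathfrak a_\pi[u]\ge0$, and your remark that the inhomogeneous equivalence \eqref{eq:form-norm-equivalence} would give the wrong dependence on $\operatorname{Re}z$ is exactly why \eqref{eq:euclidean-fp-forward} is the estimate to use.
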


\begin{proof}
The first estimate follows from nonnegativity and the spectral theorem. Testing the resolvent equation against $u$, taking real parts, and using Young's inequality gives \eqref{eq:energy-resolvent}. Ellipticity and \eqref{eq:euclidean-fp-forward} then yield \eqref{eq:full-resolvent-energy}.
\end{proof}

\subsection{The homogeneous energy problem}

Let $\Hpi$ be the completion of $C_c^\infty(\R^d)$ in
\begin{equation*}
 \|u\|_{\Hpi}^2
 :=\|\nabla u\|_{L_w^2}^2+
 \|\widetilde u\|_{L^2(d\pi)}^2,
\end{equation*}
and let $\Hm$ be the completion in
\begin{equation*}
 \|u\|_{\Hm}^2
 :=\|\nabla u\|_{L_w^2}^2+
 \|mu\|_{L_w^2}^2.
\end{equation*}
By \cref{thm:global-fp}, these completions are canonically isomorphic.

The homogeneous completion can also be identified with concrete local Sobolev functions.

\begin{lemma}
\label{lem:homogeneous-local-realization}
Every element of $\Hm$, and hence of $\Hpi$, has a canonical representative
\[
 u\in W_{w,\loc}^{1,2}(\R^d),
 \qquad \nabla u\in L_w^2,
 \qquad mu\in L_w^2.
\]
For every compact $K$,
\begin{equation}
 \|u\|_{W_w^{1,2}(K)}^2
 \le C_K\|u\|_{\Hm}^2.
 \label{eq:homogeneous-local-embedding}
\end{equation}
Under the canonical isomorphism $\Hpi\simeq\Hm$, the $L^2(d\pi)$ component of a completion element is the quasicontinuous representative $\widetilde u$. In particular, $\widetilde u\in L^2(d\pi)$ and the form $\mathfrak a_\pi(u,v)$ is represented by the concrete gradient and potential integrals.
\end{lemma}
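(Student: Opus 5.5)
This lemma is the Euclidean specialization of \cref{prop:abstract-energy-realization}, so I would reduce to it. By \cref{thm:euclidean-verification}, the critical balls of \cref{prop:critical-covering} together with $m=m_A$ form a normalized capacitary cover of $(\R^d,|\cdot|,w\,dx)$. For $w\in A_2$ the Newtonian space coincides with $W^{1,2}_{w,\loc}$ and $g_u\simeq|\nabla u|$. Hence the metric energy norms on $\LipLip(\R^d)$ are comparable to the present ones.

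One point must be handled first. Here the completion is taken over $C_c^\infty$, not $\LipLip$. I would observe that $C_c^\infty$ is dense in $\LipLip$ for the $\Hm$ norm: mollify a compactly supported Lipschitz function. The supports then stay in a fixed compact set, and $m$ is bounded there by \cref{lem:m-local-bounds}. So the two completions coincide.

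With that identification, I would repeat the argument of \cref{prop:abstract-energy-realization} to obtain the local estimate explicitly.

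Take $(u_n)\subset C_c^\infty$ Cauchy in $\Hm$, and fix a compact $K$, which we may enlarge to a ball $U$. Since $m\ge c_U>0$ on $U$, we get
\[
\|u_n\|_{L^2_w(U)}^2+\|\nabla u_n\|_{L^2_w(U)}^2\le \max\{1,c_U^{-2}\}\|u_n\|_{\Hm}^2 .
\]
This gives \eqref{eq:homogeneous-local-embedding} on the dense class, and it shows that $(u_n)$ is Cauchy in $W^{1,2}_w(U)$ for every such $U$. The local limits patch together into $u\in W^{1,2}_{w,\loc}$. Then $\nabla u\in L^2_w$ follows from Fatou or weak lower semicontinuity, and $mu\in L^2_w$ by identifying the $L^2_w$ limit of $mu_n$ locally, as in the abstract proof. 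The estimate \eqref{eq:homogeneous-local-embedding} passes to the limit. The representative is independent of the approximating sequence by the same local estimate applied to differences.

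For the $\pi$ component, \cref{thm:global-fp} shows that $(u_n)$ is also Cauchy in $L^2(d\pi)$; call the limit $q$. On each critical ball $B_j$, apply the local trace estimate \eqref{eq:cover-local-trace} to $u_n-u$, combined with the $L^2_w(B_j)$ convergence and the upper normalization $\pi(B_j)\lesssim w(B_j)/r_j^2$ (as in \eqref{eq:local-fp-reverse}). This gives $\widetilde u_n\to\widetilde u$ in $L^2(B_j,d\pi)$. The covering \eqref{eq:critical-covering} is pointwise, so $q=\widetilde u$ $\pi$-a.e.

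\textbf{Main obstacle.} The delicate point is representative-independence: $\pi$ may be singular, so $\widetilde u$ must be determined $\pi$-a.e. This follows from capacitary domination (C5), via the lemma showing that $\pi$ does not charge sets of zero Sobolev capacity. With both components identified, $\mathfrak a_\pi(u,v)$ is the limit of the forms on approximants. Since the gradient and potential parts converge in $L^2_w$ and $L^2(d\pi)$ respectively, the form is represented by the concrete integrals.
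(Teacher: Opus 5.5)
Your proposal is correct and follows the paper's proof essentially step by step. Local lower bounds on $m$ make an $\Hm$-Cauchy sequence in $C_c^\infty$ Cauchy in $W^{1,2}_w$ on every compact set, and the local limits patch to the canonical $u$. The $L^2(d\pi)$ component is then identified with $\widetilde u$ on each critical ball, by applying the local reverse Fefferman--Phong estimate (trace plus upper normalization) to $u_n-u$ and using the pointwise covering. Your preliminary identification of the $C_c^\infty$ and $\LipLip$ completions is harmless but unnecessary, since the paper, like your own subsequent argument, works directly with $C_c^\infty$ approximants.
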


\begin{proof}
Let $(u_n)\subset C_c^\infty(\R^d)$ be Cauchy in $\Hm$. By \cref{lem:m-local-bounds}, $m\ge c_K>0$ on each compact $K$. Hence
\[
 \|u_n-u_k\|_{W_w^{1,2}(K)}^2
 \le c_K^{-2}\|m(u_n-u_k)\|_{L_w^2}^2
 +\|\nabla(u_n-u_k)\|_{L_w^2}^2
 \lesssim_K\|u_n-u_k\|_{\Hm}^2.
\]
Thus $(u_n)$ converges in $W_w^{1,2}(K)$ for every compact $K$. Compatibility on nested compact sets yields a canonical $u\in W_{w,\loc}^{1,2}$. This representative is independent of the chosen approximating sequence: if $(v_n)\subset C_c^\infty(\R^d)$ represents the same completion element, then $\|u_n-v_n\|_{\Hm}\to0$, and the preceding local estimate gives $u_n-v_n\to0$ in $W_w^{1,2}(K)$ for every compact $K$. On each compact set the local Sobolev convergence identifies the global gradient component with $\nabla u$; local boundedness of $m$ identifies the global multiplier component with $mu$. Hence $\nabla u,mu\in L_w^2$, and \eqref{eq:homogeneous-local-embedding} follows.

The norm equivalence implies that $(u_n)$ is also Cauchy in $\Hpi$, so it has a limit $h\in L^2(d\pi)$ in the potential component. On each critical ball $B_j$, the local reverse Fefferman--Phong estimate \eqref{eq:local-fp-reverse}, applied to $u_n-u$, and the local $W_w^{1,2}$ convergence give
\[
 \|\widetilde u_n-\widetilde u\|_{L^2(B_j,d\pi)}\longrightarrow0.
\]
Therefore $h=\widetilde u$ almost everywhere on every $B_j$, and hence globally because the critical balls cover $\R^d$. This identifies the potential component and proves the final assertion.
\end{proof}

This realization permits a zero-energy Lax--Milgram theorem.

\begin{theorem}
Suppose that $m^{-1}f\in L_w^2$. Then there is a unique $u\in\Hpi$ such that
\begin{equation*}
 \mathfrak a_\pi(u,v)=\int_{\R^d}f\overline v\,dw,
 \qquad v\in\Hpi,
\end{equation*}
where both sides are understood by continuous extension from $C_c^\infty$. Moreover,
\begin{equation}
 \|\nabla u\|_{L_w^2}+
 \|\widetilde u\|_{L^2(d\pi)}+
 \|mu\|_{L_w^2}
 \le C\|m^{-1}f\|_{L_w^2}.
 \label{eq:homogeneous-solution-bound}
\end{equation}
The solution is represented by a local weighted $L^2$ function through \cref{lem:homogeneous-local-realization}.
\end{theorem}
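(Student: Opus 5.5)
The plan is to treat this as a Riesz representation (homogeneous Lax--Milgram) problem on the Banach space $\Hpi$, identified with $\Hm$ through \cref{thm:global-fp} and realized concretely by \cref{lem:homogeneous-local-realization}.

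\emph{Coercivity.} For $u\in C_c^\infty(\R^d)$, ellipticity gives
\[
 \mathfrak a_\pi[u]\ge\min\{\lambda,1\}\|u\|_{\Hpi}^2,
 \qquad
 |\mathfrak a_\pi(u,v)|\le\max\{\Lambda,1\}\|u\|_{\Hpi}\|v\|_{\Hpi}.
\]
Hence $\mathfrak a_\pi$ extends by continuity to a bounded, coercive, Hermitian sesquilinear form on $\Hpi$. It is an inner product equivalent to the completion norm. Therefore $\Hpi$ equipped with $\mathfrak a_\pi$ is a Hilbert space, even though $\|\cdot\|_{\Hpi}$ itself is already Hilbertian here, being a sum of $L^2$ norms.

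\emph{Boundedness of the right-hand side.} For $v\in C_c^\infty$, Cauchy--Schwarz gives
\[
 \Bigl|\int f\overline v\,dw\Bigr|
 =\Bigl|\int (m^{-1}f)\,\overline{mv}\,dw\Bigr|
 \le\|m^{-1}f\|_{L_w^2}\|mv\|_{L_w^2}
 \le C\|m^{-1}f\|_{L_w^2}\|v\|_{\Hpi}.
\]
The last step is the forward Fefferman--Phong inequality \eqref{eq:euclidean-fp-forward}. So $\ell_f(v)=\int f\overline v\,dw$ extends continuously to an antilinear functional on $\Hpi$. By \cref{lem:homogeneous-local-realization}, the extension agrees with the concrete integral $\int (m^{-1}f)\overline{mv}\,dw$, since the $\Hm$-component of $v$ is $mv\in L_w^2$.

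\emph{Existence and uniqueness.} The Riesz representation theorem in $(\Hpi,\mathfrak a_\pi)$ yields a unique $u$ with $\mathfrak a_\pi(u,v)=\ell_f(v)$ for all $v\in\Hpi$. For the bound, test with $v=u$:
\[
 c\|u\|_{\Hpi}^2\le\mathfrak a_\pi[u]
 =\ell_f(u)\le\|m^{-1}f\|_{L_w^2}\|mu\|_{L_w^2}
 \le C\|m^{-1}f\|_{L_w^2}\|u\|_{\Hpi}.
\]
This gives $\|\nabla u\|_{L_w^2}+\|\widetilde u\|_{L^2(d\pi)}\lesssim\|m^{-1}f\|_{L_w^2}$. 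Applying \eqref{eq:euclidean-fp-forward} once more, or equivalently the norm equivalence of \cref{cor:energy-equivalence}, controls $\|mu\|_{L_w^2}$, which proves \eqref{eq:homogeneous-solution-bound}.

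\emph{Main obstacle: identification.} The delicate point is making sure the abstract Riesz solution is the concrete object the statement describes. We must check that $\mathfrak a_\pi(u,v)$ for completion elements equals $\int A\nabla u\cdot\overline{\nabla v}\,dx+\int\widetilde u\,\overline{\widetilde v}\,d\pi$ with the canonical representatives. This follows from \cref{lem:homogeneous-local-realization}: along approximating sequences the gradient components converge in $L_w^2$, and the potential components converge in $L^2(d\pi)$ to $\widetilde u$. The concrete integrals therefore pass to the limit, and uniqueness in $\Hpi$ transfers to uniqueness of the local representative.
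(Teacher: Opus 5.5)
Your proposal is correct and matches the paper's proof. Both bound $v\mapsto\int f\overline v\,dw$ by Cauchy--Schwarz plus the forward Fefferman--Phong inequality, get continuity and coercivity of $\mathfrak a_\pi$ on $\Hpi$ from ellipticity, apply Lax--Milgram for existence, uniqueness and the first two terms (your Riesz representation is its Hermitian special case), and control $\|mu\|_{L_w^2}$ by \eqref{eq:euclidean-fp-forward}; your identification paragraph only spells out what \cref{lem:homogeneous-local-realization} already provides.
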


\begin{proof}
For $v\in C_c^\infty$,
\[
 \left|\int f\overline v\,dw\right|
 \le\|m^{-1}f\|_{L_w^2}\|mv\|_{L_w^2}
 \lesssim\|m^{-1}f\|_{L_w^2}\|v\|_{\Hpi}.
\]
Weighted ellipticity and Cauchy--Schwarz give
\[
 |\mathfrak a_\pi(u,v)|
 \le C\|u\|_{\Hpi}\|v\|_{\Hpi},
 \qquad u,v\in\Hpi,
\]
and
\[
 \mathfrak a_\pi[u]
 \ge \min\{\lambda,1\}\|u\|_{\Hpi}^2.
\]
Thus the form is continuous and coercive on $\Hpi$. Lax--Milgram gives existence, uniqueness, and the first two terms in \eqref{eq:homogeneous-solution-bound}; the third follows from \eqref{eq:euclidean-fp-forward}.
\end{proof}

\subsection{Local weak solutions}

\begin{definition}
Let $\Omega\subset\R^d$ be open and $\lambda_0\ge0$. A function
\[
 u\in W_{w,\loc}^{1,2}(\Omega),
 \qquad\widetilde u\in L_{\loc}^2(\Omega,d\pi),
\]
is a weak solution of $(L_\pi+\lambda_0)u=0$ in $\Omega$ if
\begin{equation}
 \int_\Omega A\nabla u\cdot\overline{\nabla\varphi}\,dx+
 \int_\Omega\widetilde u\,\overline\varphi\,d\pi+
 \lambda_0\int_\Omega u\overline\varphi\,dw=0
 \label{eq:local-weak-formulation}
\end{equation}
for every $\varphi\in C_c^\infty(\Omega)$.
\end{definition}

Weak solutions satisfy the standard energy estimate.

\begin{proposition}
\label{prop:Caccioppoli}
Let $u$ solve $(L_\pi+\lambda_0)u=0$ in $\Omega$ and let $\eta\in\LipLip(\Omega)$, $0\le\eta\le1$. Then
\begin{equation*}
 \int\eta^2|\nabla u|^2\,dw+
 \int\eta^2|\widetilde u|^2\,d\pi+
 \lambda_0\int\eta^2|u|^2\,dw
 \le C\int|u|^2|\nabla\eta|^2\,dw.
\end{equation*}
In particular, if $B(x,r)\subset B(x,R)\Subset\Omega$, then
\begin{equation*}
 \int_{B(x,r)}|\nabla u|^2\,dw+
 \int_{B(x,r)}|\widetilde u|^2\,d\pi+
 \lambda_0\int_{B(x,r)}|u|^2\,dw
 \le\frac{C}{(R-r)^2}\int_{B(x,R)}|u|^2\,dw.
\end{equation*}
\end{proposition}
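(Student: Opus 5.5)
The plan is to test the weak formulation \eqref{eq:local-weak-formulation} with $\varphi=\eta^2u$, after justifying that this is an admissible test function. Since $\eta\in\LipLip(\Omega)$ and $u\in W^{1,2}_{w,\loc}(\Omega)$, the product $\eta^2u$ lies in $W_w^{1,2}$ with compact support in $\Omega$. Its quasicontinuous representative is $\eta^2\widetilde u$ quasieverywhere, and $\widetilde u\in L^2_{\loc}(\Omega,d\pi)$ makes $\int\widetilde u\,\overline{\eta^2\widetilde u}\,d\pi$ finite. I would mollify $u$ (density of $C_c^\infty$ in $W_w^{1,2}$ for $A_2$ weights, as in \cref{prop:smooth-core-density}) to get $\varphi_n=\eta^2 u_n\in C_c^\infty(\Omega)$ with $\varphi_n\to\eta^2u$ in $W_w^{1,2}$. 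The gradient and $\lambda_0$ terms then pass to the limit directly.

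The main obstacle is the potential term, which requires $\widetilde{\varphi_n}\to\eta^2\widetilde u$ in $L^2(\supp\eta,d\pi)$. For this I would invoke the local capacitary domination from \cref{thm:euclidean-verification}: cover $\supp\eta$ by finitely many critical balls $B_j$ (\cref{lem:cover-local-bounds}), and on each apply the local trace bound \eqref{eq:local-fp-reverse} to $u_n-u$ multiplied by a cutoff. This gives $\|\widetilde u_n-\widetilde u\|_{L^2(B_j,d\pi)}\lesssim\|u_n-u\|_{W_w^{1,2}(\Lambda B_j)}\to0$. Alternatively one can take $u_n$ to be truncations composed with mollification to stay bounded; I would try the trace route first. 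Either way, \eqref{eq:local-weak-formulation} holds with $\varphi=\eta^2u$.

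With $\varphi=\eta^2 u$, the product rule gives $\nabla\varphi=\eta^2\nabla u+2\eta u\nabla\eta$. Taking real parts yields
\[
 \int\eta^2A\nabla u\cdot\overline{\nabla u}\,dx+\int\eta^2|\widetilde u|^2\,d\pi+\lambda_0\int\eta^2|u|^2\,dw
 =-2\operatorname{Re}\int\eta\,\overline u\,A\nabla u\cdot\nabla\eta\,dx.
\]
By ellipticity the first term is at least $\lambda\int\eta^2|\nabla u|^2dw$. Using Cauchy--Schwarz for the symmetric form $A$ and the upper bound $\Lambda w$, the right side is at most
\[
 2\Lambda\Big(\int\eta^2|\nabla u|^2dw\Big)^{1/2}\Big(\int|u|^2|\nabla\eta|^2dw\Big)^{1/2}.
\]
Young's inequality then absorbs half of the gradient term into the left side, giving the first estimate with $C=C(\lambda,\Lambda)$. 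Both the potential and $\lambda_0$ terms are nonnegative, so they remain on the left.

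For the ball version, choose $\eta=1$ on $B(x,r)$, supported in $B(x,R)$, with $|\nabla\eta|\le (R-r)^{-1}$; for example $\eta=\min\{1,(R-|\cdot-x|)_+/(R-r)\}$. Since $B(x,R)\Subset\Omega$, this $\eta$ lies in $\LipLip(\Omega)$, and substituting it into the first estimate gives the second.
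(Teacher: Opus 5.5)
Your proposal is correct and follows essentially the paper's proof: you justify $\eta^2u$ as a test function by $C_c^\infty$ approximation, with the $L^2(d\pi)$ convergence supplied by Fefferman--Phong trace control, and then absorb the cross term by ellipticity and Young's inequality. For the trace step the paper invokes the compact-support step of \cref{prop:smooth-core-density}, i.e.\ the $\Vpi=\Vm$ norm equivalence, whereas you apply \eqref{eq:local-fp-reverse} directly on the finitely many critical balls meeting $\supp\eta$. One small slip: $\eta$ is only Lipschitz, so $\eta^2u_n$ is not in $C_c^\infty(\Omega)$; either mollify $\eta^2u$ itself, as the paper does, or first extend \eqref{eq:local-weak-formulation} to compactly supported Lipschitz test functions by mollification and dominated convergence.
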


\begin{proof}
The product $\eta^2u$ belongs to $W_w^{1,2}(\R^d)$, has compact support in $\Omega$, and lies in $L^2(d\pi)$. The compact-support part of the proof of \cref{prop:smooth-core-density} therefore gives $\varphi_k\in C_c^\infty(\Omega)$ converging to $\eta^2u$ in the weighted gradient, $L_w^2$, and $L^2(d\pi)$ terms. Passing to the limit in \eqref{eq:local-weak-formulation} makes $\eta^2u$ an admissible test function. Ellipticity and Young's inequality then absorb the mixed gradient term. A cutoff between the concentric balls gives the second assertion.
\end{proof}

Combining Caccioppoli with the global comparison localizes the critical multiplier.

\begin{theorem}
\label{thm:local-critical-multiplier}
If $u$ solves $(L_\pi+\lambda_0)u=0$ in $B(x,R)$, then for $0<r<R$,
\begin{equation}
 \int_{B(x,r)}m(y)^2|u(y)|^2\,dw(y)
 \le\frac{C}{(R-r)^2}\int_{B(x,R)}|u|^2\,dw.
 \label{eq:local-critical-multiplier}
\end{equation}
Consequently,
\begin{equation*}
 \int_{B(x,r)}|u|^2\,dw
 \le\frac{C}{(R-r)^2(\inf_{B(x,r)}m)^2}
 \int_{B(x,R)}|u|^2\,dw.
\end{equation*}
\end{theorem}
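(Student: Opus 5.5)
The plan is to cover $B(x,r)$ by the critical balls from \cref{prop:critical-covering} and, on each one, control $m^2$ by a constant multiple of $r_j^{-2}$. The local estimate \eqref{eq:local-fp-forward} then bounds $r_j^{-2}\int_{B_j}|u|^2\,dw$ by the gradient and potential energies on $\Lambda B_j$. Those energies are in turn controlled by the Caccioppoli estimate of \cref{prop:Caccioppoli}. The difficulty is that $\Lambda B_j$ need not lie inside $B(x,R)$, so we cannot simply apply the global inequality \eqref{eq:euclidean-fp-forward}. Instead we localize with a cutoff.

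Fix an intermediate radius $r'=(r+R)/2$ and a Lipschitz cutoff $\eta$ with $\eta=1$ on $B(x,r)$, $\supp\eta\subset B(x,r')$, and $|\nabla\eta|\le C/(R-r)$. The function $v=\eta u$ extends by zero to an element of $W^{1,2}_{w,\loc}(\R^d)$, and $\widetilde v=\eta\widetilde u$ quasieverywhere, hence $\pi$-a.e. Apply the global forward inequality \eqref{eq:euclidean-fp-forward} to $v$; this is legitimate in $[0,\infty]$ by \cref{thm:euclidean-verification}. It gives
\[
 \int_{B(x,r)}m^2|u|^2\,dw\le\int m^2|v|^2\,dw
 \le C\Bigl(\int|\nabla v|^2\,dw+\int|\widetilde v|^2\,d\pi\Bigr).
\]
Expanding $|\nabla v|^2\le2\eta^2|\nabla u|^2+2|u|^2|\nabla\eta|^2$ and $|\widetilde v|^2\le\eta^2|\widetilde u|^2$, the right-hand side is at most
\[
 C\int\eta^2|\nabla u|^2\,dw+C\int\eta^2|\widetilde u|^2\,d\pi+\frac{C}{(R-r)^2}\int_{B(x,R)}|u|^2\,dw.
\]

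Both energy terms carrying $\eta^2$ are bounded by \cref{prop:Caccioppoli} with the same cutoff. That proposition applies because $\eta\in\LipLip(B(x,R))$ and $0\le\eta\le1$, and it bounds them by $C\int|u|^2|\nabla\eta|^2\,dw\le C(R-r)^{-2}\int_{B(x,R)}|u|^2\,dw$. This proves \eqref{eq:local-critical-multiplier}. The consequence follows at once by bounding $m^2\ge(\inf_{B(x,r)}m)^2$ on $B(x,r)$.

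The one step needing care is the use of the global inequality for the truncated function $\eta u$, when $u$ is only a local solution on $B(x,R)$. I must check two things. First, $\eta u$ extended by zero is in $W^{1,2}_{w,\loc}$: the product of a local Sobolev function with a Lipschitz cutoff supported in $B(x,r')\Subset B(x,R)$ has this property. Second, its quasicontinuous representative is $\eta\widetilde u$, with $\widetilde u\in L^2_{\loc}(B(x,R),d\pi)$, so every term is finite. No multiplier comparison across scales is needed, because the global Fefferman--Phong estimate already absorbs all critical balls meeting $\supp\eta$, including those extending beyond $B(x,R)$.
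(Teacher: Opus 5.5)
Your proposal is correct and is essentially the paper's proof: apply the global forward estimate \eqref{eq:euclidean-fp-forward} to $\eta u$ and control the resulting gradient and potential terms by \cref{prop:Caccioppoli}. Placing the support of $\eta$ in $B(x,(r+R)/2)$ is a small improvement: it guarantees $\eta\in\LipLip(B(x,R))$, as \cref{prop:Caccioppoli} requires, whereas the paper only says the cutoff is supported in $B(x,R)$.
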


\begin{proof}
Choose a cutoff $\eta$ supported in $B(x,R)$, equal to one on $B(x,r)$, with $|\nabla\eta|\lesssim(R-r)^{-1}$. Apply \eqref{eq:euclidean-fp-forward} to $\eta u$. The gradient and measure-potential terms are controlled by \cref{prop:Caccioppoli}, proving \eqref{eq:local-critical-multiplier}. The second estimate follows by taking the infimum of $m$ on the smaller ball.
\end{proof}

The conclusions of this section are intentionally confined to consequences established directly from the form and energy theory. Fundamental-solution estimates, heat-kernel H\"older bounds, perturbation identities, and Hardy-space characterizations require additional arguments and are not claimed here.
 %
\section{Relation to earlier generalized Schr\"odinger frameworks}
\label{sec:relation-earlier}

We explain how the mixed-measure theory interacts with the generalized Schr\"odinger frameworks of \cite{BuiDoTrong2021,DoTruong2023}. The present paper is not organized as a corrigendum: its metric content--capacity and normalized-cover results are independent. Nevertheless, the Euclidean application identifies a missing mixed-measure step in the first paper and provides a broader route to the generalized Poincar\'e component of the second.

\subsection{The algebraically required oscillation}

For a ball $B$, set
\begin{equation*}
 \mathcal O_{\omega,\pi}(u;B)
 :=\int_B\int_B|u(x)-\widetilde u(y)|^2\,d\omega(x)\,d\pi(y).
\end{equation*}

The earlier Fefferman--Phong reduction uses the following elementary identities.

\begin{lemma}
\label{lem:mixed-oscillation-identities}
If $0<\omega(B),\pi(B)<\infty$, then
\begin{equation}
 \pi(B)\int_B|u|^2\,d\omega
 \le2\mathcal O_{\omega,\pi}(u;B)+
 2\omega(B)\int_B|\widetilde u|^2\,d\pi,
 \label{eq:mixed-algebra-forward}
\end{equation}
and
\begin{equation}
 \omega(B)\int_B|\widetilde u|^2\,d\pi
 \le2\mathcal O_{\omega,\pi}(u;B)+
 2\pi(B)\int_B|u|^2\,d\omega.
 \label{eq:mixed-algebra-reverse}
\end{equation}
\end{lemma}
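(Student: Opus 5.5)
Both inequalities follow from the elementary pointwise estimate $|a|^2\le2|a-b|^2+2|b|^2$, applied to the pair $(u(x),\widetilde u(y))$ and then integrated over $B\times B$ with respect to the product measure $d\omega(x)\,d\pi(y)$. No Poincar\'e or capacity input is needed; the lemma is purely algebraic. All integrals may be taken in $[0,\infty]$, so no finiteness assumption beyond $0<\omega(B),\pi(B)<\infty$ enters. Tonelli's theorem applies because the integrands are nonnegative and measurable. Here $u$ is measurable in $x$ and $\widetilde u$ is Borel, or at least $\pi$-measurable, in $y$.

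For \eqref{eq:mixed-algebra-forward}, fix $x,y\in B$ and write $u(x)=(u(x)-\widetilde u(y))+\widetilde u(y)$. This gives
\[
 |u(x)|^2\le2|u(x)-\widetilde u(y)|^2+2|\widetilde u(y)|^2 .
\]
Integrate in $d\omega(x)\,d\pi(y)$ over $B\times B$. The left side becomes $\pi(B)\int_B|u|^2\,d\omega$. The first term on the right becomes $2\mathcal O_{\omega,\pi}(u;B)$. The last term becomes $2\omega(B)\int_B|\widetilde u|^2\,d\pi$.

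For \eqref{eq:mixed-algebra-reverse}, use the symmetric splitting $\widetilde u(y)=(\widetilde u(y)-u(x))+u(x)$. This gives
\[
 |\widetilde u(y)|^2\le2|u(x)-\widetilde u(y)|^2+2|u(x)|^2 .
\]
Integrating over $B\times B$ in the same way produces $\omega(B)\int_B|\widetilde u|^2\,d\pi$ on the left, and $2\mathcal O_{\omega,\pi}(u;B)+2\pi(B)\int_B|u|^2\,d\omega$ on the right.

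The only point requiring care is representative dependence. Changing $u$ on an $\omega$-null set leaves every $d\omega(x)$-integral unchanged. The $y$-variable always carries the fixed representative $\widetilde u$, so both sides of each inequality refer to the same function.
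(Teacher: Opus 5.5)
Your proposal is correct and follows the paper's own proof: both integrate the pointwise estimate $|u(x)|^2\le2|u(x)-\widetilde u(y)|^2+2|\widetilde u(y)|^2$, and its symmetric counterpart, over $B\times B$ against $d\omega(x)\,d\pi(y)$. Your additional remarks on Tonelli (legitimate here because both measures are finite on $B$) and on the choice of representative are accurate refinements of the paper's two-line argument.
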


\begin{proof}
Integrate
\[
 |u(x)|^2\le2|u(x)-\widetilde u(y)|^2+2|\widetilde u(y)|^2
\]
with respect to $d\omega(x)d\pi(y)$. Interchanging the roles of the two terms gives the second inequality.
\end{proof}

The point is that \eqref{eq:mixed-algebra-forward}--\eqref{eq:mixed-algebra-reverse} involve $d\omega(x)d\pi(y)$. An ordinary double oscillation with respect to $d\omega(x)d\omega(y)$ does not control this quantity when $\pi$ is singular.

\subsection{The 2021 Fefferman--Phong estimate}

Proposition~3.2 of \cite{BuiDoTrong2021} states the two global Fefferman--Phong estimates under the assumptions \eqref{eq:A2-RD-assumption}, \eqref{eq:global-scale-decay}, and \eqref{eq:controlled-enlargement}. In the printed proof, the local reduction is followed by an ordinary $dw(x)dw(y)$ Poincar\'e estimate. The algebra in \cref{lem:mixed-oscillation-identities}, however, requires the mixed $dw(x)d\pi(y)$ oscillation. Thus that ordinary Poincar\'e step does not by itself justify the proposition for a general singular Radon measure.

The statement is recovered by the capacity argument developed here.

\begin{theorem}[Corrected Fefferman--Phong theorem]
\label{thm:corrected-2021-FP}
Assume \eqref{eq:A2-RD-assumption}, \eqref{eq:global-scale-decay}, and \eqref{eq:controlled-enlargement}. Let $\rho_w(\cdot,\pi)$ be the critical radius defined in \cite{BuiDoTrong2021}, and set $m_w=\rho_w^{-1}$. Let $u\in W_{w,\loc}^{1,2}(\R^d)$. Then both inequalities below hold with all terms interpreted in $[0,\infty]$:
\begin{equation*}
 \int_{\R^d}m_w^2|u|^2\,dw
 \le C\left(\int_{\R^d}|\nabla u|^2\,dw+
 \int_{\R^d}|\widetilde u|^2\,d\pi\right),
\end{equation*}
and
\begin{equation*}
 \int_{\R^d}|\widetilde u|^2\,d\pi
 \le C\left(\int_{\R^d}|\nabla u|^2\,dw+
 \int_{\R^d}m_w^2|u|^2\,dw\right).
\end{equation*}
Consequently, finiteness of the gradient term and either potential term implies finiteness of the other potential term.
\end{theorem}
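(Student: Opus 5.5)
The plan is to reduce the statement to \cref{thm:euclidean-verification} with the multiplier $m_A$, and then to replace $m_A$ by $m_w=\rho_w^{-1}$ using a pointwise comparison of the two critical radii. Under \eqref{eq:A2-RD-assumption}, \eqref{eq:global-scale-decay}, and \eqref{eq:controlled-enlargement}, \cref{thm:euclidean-verification} gives \eqref{eq:euclidean-fp-forward} and \eqref{eq:euclidean-fp-reverse} for every $u\in W^{1,2}_{w,\loc}(\R^d)$. Both inequalities hold in $[0,\infty]$, and the potential integral is representative-independent because $\pi$ does not charge capacity-zero sets. It therefore suffices to show
\[
 C^{-1}\rho_A(x)\le\rho_w(x)\le C\rho_A(x),\qquad x\in\R^d,
\]
for one fixed admissible threshold $A\ge A_*$. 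Then $m_w^2\simeq m_A^2$ pointwise, and substituting into \eqref{eq:euclidean-fp-forward}--\eqref{eq:euclidean-fp-reverse} gives both displayed inequalities with a changed constant. The finiteness statement then follows immediately from the $[0,\infty]$ formulation.

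For the comparison I would invoke \cref{lem:threshold-equivalence}. Its proof uses only the scale-decay inequality and the lower normalization \eqref{eq:threshold-lower-normalization}, not the particular threshold mechanics. The radius of \cite{BuiDoTrong2021} is, as I understand it, of the form $\sup\{r>0:\PhiPi(x,r)\le A_1\}$ for some fixed constant $A_1$, typically $1$. Its lower normalization $\PhiPi(x,\rho_w(x))\ge c_1$ comes from \cite[Proposition~2.1]{BuiDoTrong2021}, as noted in the remark after \cref{lem:threshold-equivalence}. The lower normalization for $\rho_A$ is \cref{lem:critical-normalization}. Finiteness and positivity of $\rho_w$ follow from the same argument as \eqref{eq:rho-positive-finite}: $\PhiPi(x,r)\to0$ as $r\to0$ and $\PhiPi(x,R)\to\infty$ as $R\to\infty$. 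So \cref{lem:threshold-equivalence} applies with $A_2=A$ and gives $\rho_w\simeq\rho_A$ uniformly.

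I expect the main obstacle to be exactly this identification: checking that the 2021 definition fits the hypotheses of \cref{lem:threshold-equivalence}. If $\rho_w$ is instead defined by an infimum, such as $\inf\{r:\PhiPi(x,r)\ge1\}$, I would redo the argument of \cref{lem:threshold-equivalence} directly. For the upper bound, from $\PhiPi(x,\rho_w)\ge c_1$ the reversed scale-decay inequality gives $\PhiPi(x,R)>A$ for $R>L\rho_w$, so $\rho_A\le L\rho_w$. For the lower bound, the definition of $\rho_A$ together with \eqref{eq:critical-Phi-normalization} and the reversed decay shows that $\PhiPi(x,r)$ stays below $1$ for $r\le c\rho_A$, so $\rho_w\ge c\rho_A$. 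In either formulation, only scale decay and the two normalizations are used, so the constants remain structural.
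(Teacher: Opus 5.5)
Your proposal is correct and follows the paper's proof. The paper likewise takes $0<\rho_w<\infty$ and the lower normalization $\PhiPi(x,\rho_w(x))\ge1$ from \cite[Proposition~2.1]{BuiDoTrong2021}, applies \cref{lem:threshold-equivalence} to get $m_w\simeq m_A$ pointwise, and reads off both inequalities from \cref{thm:euclidean-verification}. One slip, in your fallback infimum argument only: the bound $\PhiPi(x,r)<1$ for $r\le c\rho_A(x)$ comes from the forward decay $\PhiPi(x,r)\le C_0A\,(r/\rho_A(x))^\delta$, not from the reversed one.
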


\begin{proof}
By \cite[Proposition~2.1(i)--(ii)]{BuiDoTrong2021}, the original critical radius satisfies
\begin{equation*}
 0<\rho_w(x,\pi)<\infty
\end{equation*}
for every $x\in\R^d$, and, with $r=\rho_w(x,\pi)$,
\begin{equation*}
 1\le
 \frac{r^2\pi(B(x,r))}{w(B(x,r))}
 \le C_1
\end{equation*}
uniformly in $x$. In particular, the original radius has the uniform lower normalization required in \cref{lem:threshold-equivalence}. Applying that lemma to $\rho_w(\cdot,\pi)$ and $\rho_A$ gives
\[
 \rho_w(x,\pi)\simeq\rho_A(x),
 \qquad
 m_w(x,\pi)\simeq m_A(x)
\]
uniformly in $x$. The two asserted inequalities now follow from \cref{thm:euclidean-verification}.
\end{proof}

\begin{remark}
The proof preserves the original $A_2$, reverse-doubling, scale-decay, and controlled-enlargement assumptions. Its new mechanism is
\[
 \text{scale gain}\Longrightarrow
 \text{content control}\Longrightarrow
 \text{capacity control}\Longrightarrow
 \text{mixed Poincar\'e}.
\]
\end{remark}

\subsection{Consequences restored at the energy level}

The corrected estimate supplies the form-theoretic conclusions that depend directly on Proposition~3.2 of \cite{BuiDoTrong2021}.

\begin{corollary}
Under the hypotheses of \cref{thm:corrected-2021-FP}:
\begin{enumerate}[label=\textup{(\roman*)}]
\item the homogeneous spaces defined by the $L^2(d\pi)$ potential term and by the critical multiplier $m_w$ coincide with equivalent norms;
\item the corresponding inhomogeneous form domains coincide;
\item $C_c^\infty(\R^d)$ is a form core;
\item the Radon-measure form is closed and defines a nonnegative self-adjoint operator;
\item the local critical-multiplier estimate \eqref{eq:local-critical-multiplier} holds for weak solutions.
\end{enumerate}
\end{corollary}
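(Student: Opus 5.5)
The plan is to derive each item by combining \cref{thm:corrected-2021-FP} with the comparability $m_w\simeq m_A$, established inside its proof through \cref{lem:threshold-equivalence}, and then to invoke the results of \cref{sec:operators}. Those results were proved for $m=m_A$ under exactly the same hypotheses. The guiding observation is that every statement in the corollary is insensitive to replacing $m_A$ by a pointwise comparable multiplier. Indeed, $\int m_w^2|u|^2\,dw\simeq\int m_A^2|u|^2\,dw$ for every $u$, with both sides in $[0,\infty]$.

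\textbf{Item (i).} The two homogeneous completions, built from $\|\nabla u\|_{L_w^2}^2+\|\widetilde u\|_{L^2(d\pi)}^2$ and from $\|\nabla u\|_{L_w^2}^2+\|m_wu\|_{L_w^2}^2$, have equivalent norms on $C_c^\infty$ by the two inequalities of \cref{thm:corrected-2021-FP}. Hence the extension of the identity identifies the completions. For the concrete realization, apply \cref{lem:homogeneous-local-realization} to $\Hm$ with $m=m_A$ and note that $\|m_wu\|_{L_w^2}\simeq\|m_Au\|_{L_w^2}$. Alternatively, the critical balls of \cref{prop:critical-covering} together with $m_w$ form a normalized capacitary cover, because (C3) transfers under comparability. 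Then \cref{prop:abstract-energy-realization} applies directly.

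\textbf{Item (ii).} Add the common term $\|u\|_{L_w^2}^2$ and use \cref{prop:concrete-domain-equivalence}. The space $\{u\in W_w^{1,2}: m_wu\in L_w^2\}$ equals $\Vm$, with an equivalent norm, since $m_w\simeq m_A$.

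\textbf{Items (iii) and (iv).} The form domain is $\Vpi$ in both cases, so \cref{prop:smooth-core-density,thm:closed-form-realization} apply verbatim.

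\textbf{Item (v).} Apply \cref{thm:local-critical-multiplier} and then replace $m_A^2$ by $m_w^2$ on the left-hand side at the cost of a constant. Alternatively, repeat its proof using the first inequality of \cref{thm:corrected-2021-FP} applied to $\eta u$, together with \cref{prop:Caccioppoli}.

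The only delicate point is making sure the comparability $\rho_w\simeq\rho_A$ holds uniformly. This requires the lower normalization of $\rho_w$, which \cite[Proposition~2.1]{BuiDoTrong2021} supplies, and a threshold $A\ge A_*$; both were already checked in the proof of \cref{thm:corrected-2021-FP}. Apart from this, everything reduces to bookkeeping of constants.
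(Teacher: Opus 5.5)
Your proposal is correct and follows essentially the same route as the paper. You transfer $m_w\simeq m_A$ via \cref{lem:threshold-equivalence}, then invoke \cref{prop:concrete-domain-equivalence,prop:smooth-core-density,thm:closed-form-realization,thm:local-critical-multiplier}, with \cref{lem:homogeneous-local-realization} supplying the concrete realization in (i); your alternatives (treating $(\{B_j\},m_w)$ directly as a normalized capacitary cover, or rerunning the proof of \cref{thm:local-critical-multiplier} with \cref{thm:corrected-2021-FP}) are also valid but not needed.
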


\begin{proof}
Apply \cref{prop:concrete-domain-equivalence,prop:smooth-core-density,thm:closed-form-realization,thm:local-critical-multiplier}, using threshold equivalence.
\end{proof}

In \cite{BuiDoTrong2021}, Proposition~3.2 is used directly in the homogeneous energy-space identification, the local trace embedding, the form-core and closedness arguments, and the localized critical-multiplier estimates. The preceding theorem and corollary recover these consequences independently of the unsupported ordinary $dw\,dw$ oscillation step. The later kernel and Hardy-space theory uses additional ingredients and is treated separately in the following scope remark.

\begin{remark}
The later fundamental-solution, heat-kernel, perturbation, and Hardy-space arguments of \cite{BuiDoTrong2021} use additional local regularity and kernel machinery. The present correction restores the missing energy and local multiplier input, but it is not presented as a line-by-line reproof of every subsequent theorem in that paper.
\end{remark}

\subsection{A finite-scale \texorpdfstring{$A_2$}{A2} extension of the 2023 mixed inequality}

The generalized Poincar\'e inequality in \cite{DoTruong2023} is proved for $A_1$ weights by a Euclidean potential-theoretic argument. We now record the conclusion supplied by the finite-scale capacity method under the quadratic $A_2$ assumption.

Let $w\in A_2(\R^d)\cap RD_\beta$, and let $\nu$ be a positive Radon measure. Fix $R_*>0$ and $M>0$. Suppose that, uniformly in $x$,
\begin{equation*}
 \frac{r^2\nu(B(x,r))}{w(B(x,r))}
 \le C_0\left(\frac rR\right)^\delta
 \frac{R^2\nu(B(x,R))}{w(B(x,R))}
\end{equation*}
whenever $0<r<R\le R_*$, and that
\begin{equation*}
 \nu(B(x,2r))
 \le C_E\left[
 \nu(B(x,r))+\frac{w(B(x,r))}{r^2}
 \right],
 \qquad 0<r\le R_*/2.
\end{equation*}
Choose the capped radius range as in \cref{prop:finite-scale-cover}, set
\begin{equation*}
 d\nu_M:=d\nu+MR_*^{-2}\,dw,
\end{equation*}
and denote by $m_{M,*}$ the associated capped critical multiplier.

\begin{theorem}[Finite-scale $A_2$ generalized Poincar\'e principle]
\label{thm:A2-extension-2023}
Under the preceding hypotheses, there is a bounded-overlap family of capped critical balls $B_j=B(x_j,r_j)$. The local estimates hold for every $u\in W_{w,\loc}^{1,2}(\R^d)$. The global estimates hold as inequalities in $[0,\infty]$, exactly as in \cref{thm:global-fp}. Moreover,
\begin{equation}
 \nu_M(B_j)\simeq\frac{w(B_j)}{r_j^2},
 \label{eq:local-normalization-2023}
\end{equation}
\begin{equation}
 \nu_M(B(y,s))
 \le C\left(\frac s{r_j}\right)^{\delta_0}
 \frac{w(B(y,s))}{s^2},
 \qquad y\in B_j,
 \quad0<s\le r_j,
 \label{eq:local-growth-2023}
\end{equation}
where $\delta_0=\min\{\delta,2\}$. Consequently,
\begin{equation*}
 \int_{B_j}\int_{B_j}|u(x)-\widetilde u(y)|^2\,dw(x)\,d\nu_M(y)
 \le Cr_j^2\nu_M(B_j)
 \int_{\Lambda B_j}|\nabla u|^2\,dw,
\end{equation*}
and
\begin{align*}
 \int_{\R^d}m_{M,*}^2|u|^2\,dw
 &\le C\left(\int_{\R^d}|\nabla u|^2\,dw+
 \int_{\R^d}|\widetilde u|^2\,d\nu_M\right),\\
 \int_{\R^d}|\widetilde u|^2\,d\nu_M
 &\le C\left(\int_{\R^d}|\nabla u|^2\,dw+
 \int_{\R^d}m_{M,*}^2|u|^2\,dw\right).
\end{align*}
Since $\nu\le\nu_M$, one also has the weaker unaugmented conclusion
\begin{equation}
 \int_{B_j}\int_{B_j}|u(x)-\widetilde u(y)|^2\,dw(x)\,d\nu(y)
 \le Cr_j^2\nu_M(B_j)
 \int_{\Lambda B_j}|\nabla u|^2\,dw.
 \label{eq:A2-local-mixed-nu-2023}
\end{equation}
The coefficient remains $\nu_M(B_j)$; it cannot in general be replaced by $\nu(B_j)$.
\end{theorem}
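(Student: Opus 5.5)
The proof will be a direct assembly of \cref{prop:finite-scale-cover} with the abstract machinery of \cref{sec:content-capacity,sec:normalized-covers}. First I apply \cref{prop:finite-scale-cover} to the base measure $\pi=\nu$, with the given $R_*$, $M$, threshold $A_M\ge2C_2$ and the prescribed final dilation $\Lambda$. The hypotheses of the theorem are exactly \eqref{eq:finite-scale-decay-base} and \eqref{eq:finite-scale-enlargement-base} for $\nu$. The standing assumption $w\in A_2\cap RD_\beta$ makes $(\R^d,|\cdot|,w\,dx)$ an unbounded complete $2$-PI space, and Newtonian and weighted Sobolev energies coincide, so $g_u$ may be replaced by $|\nabla u|$. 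Parts (i)--(iii) of the proposition then give the capped critical balls $B_j=B(x_j,r_j)$. These cover $\R^d$ pointwise, have $N_\Lambda$-bounded overlap of $\Lambda B_j$, and satisfy $m_{M,*}\simeq r_j^{-1}$ on $B_j$. Hence (C1)--(C3) hold. Normalization \eqref{eq:local-normalization-2023} comes from (i) evaluated at $x=x_j$, and \eqref{eq:local-growth-2023} is exactly \eqref{eq:finite-positive-gain-measure} from (iv).

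Next I invoke \cref{cor:growth-generated-cover} for the pair $(\omega,\pi)=(w\,dx,\nu_M)$ with gain $\delta_0>0$. The lower normalization (C4) comes from \eqref{eq:local-normalization-2023}, and the growth \eqref{eq:cover-growth} from \eqref{eq:local-growth-2023}. \cref{thm:growth-capacity}, applied on each $B_j$ with $R=r_j$, gives (C5) with a uniform constant, since $C_{\mathrm{gr}}$ and $\delta_0$ do not depend on $j$. So $(\{B_j\},m_{M,*})$ is a normalized capacitary cover for $(w,\nu_M)$. The local mixed estimate for $\nu_M$ then follows from the local-estimates proposition of \cref{sec:normalized-covers}, which rests on \cref{prop:mean-zero-trace} and \cref{thm:general-mixed-estimate}. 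The two global inequalities follow from \cref{thm:global-fp}, interpreted in $[0,\infty]$, for every $u\in W^{1,2}_{w,\loc}$. Representative-independence of $\widetilde u$ $\nu_M$-a.e. comes from the lemma on global capacity-zero sets.

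For \eqref{eq:A2-local-mixed-nu-2023}, note that $\nu\le\nu_M$ as measures. The integrand $|u(x)-\widetilde u(y)|^2$ is nonnegative, so the $d\nu(y)$ double integral is dominated by the $d\nu_M(y)$ one. The only subtlety is that $\widetilde u$ must be well defined $\nu$-a.e., and this holds because $\nu\le\nu_M$ and $\nu_M$ charges no capacity-zero set.

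The final claim, that $\nu_M(B_j)$ cannot in general be replaced by $\nu(B_j)$, needs an example. I would take $\nu\equiv0$ on a region, or $\nu=0$ entirely. The decay hypothesis then holds trivially, and the caps force $r_j=r_*$ with $\nu_M=MR_*^{-2}w$. The inequality with coefficient $\nu(B_j)=0$ would make the right side vanish, but the left side also vanishes since $\nu=0$. So I need a better example: take $\nu$ with a tiny but nonzero mass on $B_j$, for instance $\nu=\epsilon\,\delta$-type smooth bumps scaled so $\nu(B_j)\ll\nu_M(B_j)$. Then test with $u$ equal to $1$ on the bump and $0$ on most of $B_j$. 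The left side is then $\simeq\nu(B_j)w(B_j)$ while $r_j^2\nu(B_j)\int|\nabla u|^2$ is much smaller, using a thin transition chosen after $\epsilon$. I expect this counterexample calibration to be the main (though minor) obstacle; everything else is bookkeeping of earlier results.
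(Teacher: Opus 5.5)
Your assembly of the main assertions matches the paper's proof: \cref{prop:finite-scale-cover} with $\pi=\nu$, then \cref{thm:growth-capacity}, \cref{cor:growth-generated-cover} and \cref{thm:global-fp} for $\nu_M$, then the domination $\nu\le\nu_M$. Your observation that $\nu\le\nu_M$ makes $\widetilde u$ well defined $\nu$-a.e. is a useful addition. The paper gives no argument for the final sentence, and your sketch for it has a genuine gap.

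For fixed balls, the inequality with coefficient $\nu(B_j)$ is invariant under $\nu\mapsto\epsilon\nu$. So the smallness of $\nu(B_j)$ relative to $\nu_M(B_j)$ only pins down the radii. What you actually need is a $u$ equal to $1$ on $\supp(\nu\lfloor B_j)$ with $r_j^2\int_{\Lambda B_j}|\nabla u|^2\,dw\ll w(B_j)$. A thin transition layer raises this energy rather than lowering it.

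More decisively, an interior bump can never work under the hypotheses. Suppose $\nu(2B_j)\le C\nu(B_j)$. Apply the decay hypothesis at $y\in B_j$ between $s$ and $r_j\le R_*$, using $B(y,r_j)\subset 2B_j$ and doubling. This gives
\[
 \nu(B(y,s))\le C'\Bigl(\frac{s}{r_j}\Bigr)^{\delta}\frac{w(B(y,s))}{s^2}\cdot\frac{r_j^2\,\nu(B_j)}{w(B_j)},\qquad y\in B_j,\quad 0<s\le r_j.
\]
Then \cref{thm:growth-capacity}, applied to $\frac{w(B_j)}{r_j^2\nu(B_j)}\nu$, gives capacitary domination with $A_{B_j}\simeq r_j^2\nu(B_j)/w(B_j)$. \Cref{thm:general-mixed-estimate} then yields the mixed estimate with coefficient $\nu(B_j)$. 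Concretely, take a bump of width $\rho$ centred at $z$ in unweighted space. Then $\Phi_\nu(z,\rho)/\Phi_\nu(z,R_*)\simeq(R_*/\rho)^{d-2}$, independently of $\epsilon$. The decay hypothesis therefore forces $\rho\gtrsim C_0^{-1/(d-2+\delta)}R_*$, and your ratio stays bounded.

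A counterexample must instead use the non-doubling that the additive term in \eqref{eq:finite-scale-enlargement-base} permits: $\nu$ heavy just outside $B_j$, with only a thin sliver inside.

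\emph{Setup.} Take $w\equiv1$ and $\nu_t:=\epsilon\one_{\{x_1>t\}}\,dx$, with $\epsilon\le R_*^{-2}$ and $Mc_*^2\le1$. The ratio $|B(x,r)\cap\{x_1>t\}|/|B(x,r)|$ is nondecreasing in $r$ if $x_1<t$, and at least $\tfrac12$ if $x_1\ge t$. Hence:
\begin{enumerate}[label=\textup{(\alph*)}]
\item decay holds with $\delta=2$, $C_0=2$;
\item enlargement holds with $C_E=2^{d-2}$;
\item $\Phi_M\le 2<A_M$ on $(0,r_*]$, so $\rho_{M,*}\equiv r_*$ and the cover can be fixed independently of $t$.
\end{enumerate}

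\emph{Test function.} Put $t=(x_j)_1+r_*-h$, $p=x_j+(r_*-h)e_1$ and $a=\sqrt{2r_*h}$. The cap $B_j\cap\{x_1>t\}$ lies in $B(p,a)$. The cutoff $u=\min\{1,(2-|x-p|/a)_+\}$ equals $1$ $\nu_t$-a.e. on $B_j$ and has energy $\lesssim a^{d-2}$.

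\emph{Comparison.} For small $h$, the mixed oscillation is at least $\tfrac12\nu_t(B_j)|B_j|$. Meanwhile $r_*^2\nu_t(B_j)\int|\nabla u|^2\lesssim r_*^2\nu_t(B_j)a^{d-2}$. The ratio is $\gtrsim(r_*/a)^{d-2}\to\infty$ as $h\downarrow0$. All hypotheses, and hence the $\nu_M$ version, hold with constants independent of $t$.
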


\begin{proof}
Apply \cref{prop:finite-scale-cover} with base measure $\pi=\nu$, scale $R_*$, augmentation parameter $M$, and the prescribed dilation $\Lambda$. The augmented measure furnished by that proposition is exactly
\[
 d\pi_M=d\nu+MR_*^{-2}\,dw=d\nu_M,
\]
so there is no second augmentation. The proposition gives \eqref{eq:local-normalization-2023}, \eqref{eq:local-growth-2023}, the critical-multiplier comparison, and pointwise bounded overlap. The capacity and mixed estimates follow from \cref{thm:growth-capacity,cor:growth-mixed}; the global inequalities follow from \cref{cor:growth-generated-cover}. Since $\nu\le\nu_M$, dropping the added measure in the second variable gives \eqref{eq:A2-local-mixed-nu-2023}, with the normalization coefficient $\nu_M(B_j)$ unchanged. All constants may depend on the fixed augmentation parameter $M$.
\end{proof}

We compare the finite-scale conclusion with the earlier $A_1$ argument.

\begin{remark}
For the generalized Poincar\'e and Fefferman--Phong components, \cref{thm:A2-extension-2023} uses the natural quadratic assumption $w\in A_2$ rather than $w\in A_1$. It also replaces the Riesz-potential summability mechanism by positive codimensional gain and capacity. The price is a fixed dilation of the ball on the gradient side and the explicit finite-scale margin in \cref{prop:finite-scale-cover}.
\end{remark}

The extension concerns only the mixed Poincar\'e and Fefferman--Phong components.

\begin{remark}
The eigenvalue asymptotics and eigenfunction decay in \cite{DoTruong2023} also rely on a weighted Young convolution inequality and further spectral arguments. Theorem~\ref{thm:A2-extension-2023} extends the mixed Poincar\'e and Fefferman--Phong tools; it does not, without additional work, extend the complete spectral theorem from $A_1$ to $A_2$.
\end{remark}

\subsection{Summary of the application}

The results of this section establish the following precise conclusions:
\begin{enumerate}[label=\textup{(\roman*)}]
\item the two-sided Fefferman--Phong assertion of Proposition~3.2 in \cite{BuiDoTrong2021} is valid under its original hypotheses, with the missing mixed oscillation supplied by capacity;
\item the energy-space, form-core, closedness, and local multiplier consequences are thereby obtained independently;
\item the generalized Poincar\'e and Fefferman--Phong mechanism of \cite{DoTruong2023} has a fixed-dilate finite-scale version under $A_2$ and positive scale gain;
\item no claim is made here concerning later kernel, Hardy-space, or spectral results whose proofs require additional ingredients.
\end{enumerate}
 %

\section*{Statements and Declarations}

\noindent\textbf{Author contributions.}
Tan Duc Do conceived the study, developed and verified the mathematical arguments, and wrote and revised the manuscript.

\medskip
\noindent\textbf{Data availability.}
No datasets were generated or analysed during the current study.

\medskip
\noindent\textbf{Competing interests.}
The author has no relevant financial or non-financial interests to disclose.



\end{document}